\newtheorem{theorem}{Theorem}
\newtheorem{corollary}{Corollary}
\newtheorem{lemma}{Lemma}
\newtheorem{proposition}{Proposition}
\newtheorem{definition}{Definition}
\newtheorem{remark}{Remark}
\newtheorem{example}{Example}
\newtheorem{notation}{Notation}
\newcommand{\SB}{S\!B_n}
\newcommand{\TB}{T\!B_n}
\newcommand{\TSB}{T\!S\!B_n}
\newcommand{\E}{\mathcal{E}}
\newcommand{\F}{\mathcal{F}}
\newcommand{\Psf}{\mathsf{P}_n}
\newcommand{\Psk}{\mathsf{P}_k}
\newcommand{\PP}{\mathsf{P}}
\newcommand{\A}{{\rm{A}}}
\newcommand{\B}{{\rm{B}}}
\newcommand{\C}{{\rm{C}}}
\newcommand{\D}{{\rm{D}}}
\newcommand{\Sing}{{\rm{S}}}
\newcommand{\M}{{\rm{M}}}
\newcommand{\LLL}{{\rm{L}}}
\newcommand{\N}{{\rm{N}}}
\def\a{\mathsf{a}\,}
\def\b{\mathsf{b}\,}
\def\c{\mathsf{c}\,}
\def\d{\mathsf{d}\,}
\def\f{\mathsf{f}\,}
\def\u{\mathsf{u}\,}
\def\v{\mathsf{v}\,}
\def\w{\mathsf{w}\,}
\def\x{\mathsf{x}}
\def\y{\mathsf{y}\,}
\def\I{{\rm{I}}}
\def\1{{\bf 1}}
\begin{document}

\title{Tied links and  invariants for singular links}

\author{F. Aicardi}
 \address{Sistiana 56, 34011 Trieste, Italy}
 \email{francescaicardi22@gmail.com}
 \author{J. Juyumaya}
 \address{IMUV,
 Gran Breta\~{n}a 1111, Valpara\'{i}so 2340000, Chile.}
 \email{juyumaya@gmail.com}

\keywords{Tied links, set partition, bt--algebra, invariants for singular links and tied singular links.}
\thanks{The second   author was  in part supported   by   Fondecyt   Nº 1180036, Nº 1210011.}

\subjclass{57M25, 20C08, 20F36}

\date{}

\begin{abstract}
Tied links and the  tied braid monoid were introduced recently by the authors and used to define new invariants for classical links. Here, we give  a version purely algebraic--combinatoric  of  tied links. With this new version    we prove that the tied braid monoid has a decomposition  like   a semi--direct   group  product. By using this decomposition  we reprove the Alexander and Markov theorem for tied links; also, we introduce the tied singular knots, the tied singular braid monoid  and certain families of Homflypt type invariants for tied singular links; these invariants are  five--variables polynomials. Finally,  we study the behavior of these invariants; in particular, we show that our invariants distinguish non isotopic singular links indistinguishable by the  Paris--Rabenda invariant.

\end{abstract}
\begin{center}
\maketitle

\begin{minipage}{10cm}
\tableofcontents
\end{minipage}
\end{center}
\section{Introduction}

Tied links and their algebraic counterpart, the tied braid monoid, were introduced by the authors in \cite{aijuJKTR1}. A tied link is a classical link    admitting ties among its components; the tied braid monoid is defined through a presentation with usual braid generators together with ties generators and defining relations coming from
the so--called bt--algebra \cite{ajICTP1}, cf. \cite{aijuJKTR1, rhJAC, maIMRN, jaPoJLMS}.
\smallbreak
Tied links contains the classical links, so every invariant for tied links defines also an invariant for classical links.
We have  constructed two invariants for tied links: the one of type Homflypt polynomial \cite{aijuJKTR1} and the other one of type Kauffman polynomial \cite{aijuMathZ}. These invariants turn out  to be more powerful, respectively, than the Homflypt and the Kauffman polynomials; therefore the tied links are useful in the understanding  of classical links.
These invariants  for tied links  were  constructed by the Jones recipe\footnote{This terminology is the abstraction of the  method by which  V. Jones constructed the Homflypt polynomial, see \cite{joAM}.} and also by skein relations. In the construction using the Jones recipe,  the role played by the tied braid monoid is to tied links as the role of the braid groups   to the  classical links.
\smallbreak
With the aim  of  constructing  others classes of  tied  knot--like objects,   we  reformulate  the tied links in   algebraic--combinatoric terms, and    we  prove that the tied braid monoid has a certain decomposition as semi--direct product:  a part formed by ties (monoid of the set partitions) and the other part by the usual braid (braid group).  This decomposition and the  new   algebraic--combinatoric  context for tied links allows us  to introduce the {\it tied singular links} and {\it combinatoric tied singular links}.  Hence  we define four  families  of invariants for combinatoric  tied singular links which are constructed by the Jones recipe by using two maps from  the singular  braid monoid to the bt--algebra  and   two different presentations of this algebra.     These invariants are  five--variables polynomials of type Homflypt,    in the sense that they become  the Hompflypt polynomial whenever are evaluated on classical knots.
  We define  these  invariants  also  by  skein relations; the usual \lq local skein relations\rq , which take  into account any two crossing  strands, are replaced  by     \lq global  skein relations\rq , which   take  into account also the
  components  to which the crossing  strands  belong.

We also study here the behavior of these invariants, that is, we compare   them  with each other    and  with  another invariant for
singular links of type Homflypt polynomial, defined by Paris and Rabenda
in \cite{paraALIF} which is a four--variable polynomial that generalizes
the invariant defined by Kauffman y Vogel in \cite{kavoJKTR}.
As we said before, the importance of tied links lies in the fact that,
when evaluated on classical links, they are able to distinguish pairs
of isotopic
links not distinguished by classical polynomials, see   \cite{aijuMMJ1},
\cite{aijuMathZ},
\cite{aica} and \cite{chjukalaIMRN}. Now, we have to
notice that,  as far as we know, in literature there is not   a list of
non isotopic singular links which are not distinguished by the known
 invariants for  singular links. Therefore, we build  pairs of singular links starting
by some pairs of non isotopic classical links that are not distinguished by the
Homflypt polynomial, according to the list provided in \cite{chli},
then we calculate on them our invariants and the invariant due to
Paris and Rabenda \cite{kavoJKTR}.  We remark, finally,  that in general   it  seems to be not   easy to find pairs proving that the new polynomials are more
powerful on singular links.

\smallbreak
We give now   the layout of the paper. Section 2 establishes the main tools used during the paper, that is, some facts on set partitions and the bt--algebra. The main goal of Section 3, is to prove   Theorem \ref{decomposition}, which says that the tied braid monoid $TB_n$ can be decomposed as the semi--direct product,  denoted by  $\PP_n\rtimes B_n$,  between the monoid $\PP_n$, formed by  the set partition of  $\{1, \ldots ,n\}$   and the braid group $B_n$;  note that  the action of $B_n$ on $\PP_n$ is naturally  inherited  from the
 action of the symmetric group on $\PP_n$.  The decomposition of $TB_n$ as  semi--direct product uses several ideas of \cite{rhJAC} adapted to our situation. Now, the decomposition of $TB_n$ by the monoid $\PP_n$, a monoid eminently combinatoric, and the group $B_n$, induces to    treat  the tied links as    algebraic--combinatoric objects, {\it the combinatoric tied links}, which are introduced in Section 4;   we define also their  isotopy classes, which of course coincide  with those  of tied links. In Theorems  \ref{Alexander} and \ref{Markov} we prove, respectively,  the Markov and Alexander theorems for combinatoric tied links.
\smallbreak
In Section 5 we recall some elements from the the theory of singular links; also we introduce four families of invariants for singular links, see Theorems \ref{PsiPhi} and \ref{tildePsiPhi}. These are  five--variables polynomials, which we denote by
$\Phi_{\x,\y}$, $\Psi_{\x,\y}$, $\Phi_{\x,\y}'$ and $\Psi_{\x,\y}'$; notice that the letters $\x$ and $\y$ are two of the five variables of the invariants but they parametrize the invariants too. These invariants come out from the Jones recipe; more precisely, we construct homomorhpisms from the monoid of singular braids to the bt--algebra (Proposition \ref{homopsiphi}), so using these homomorphisms and the Markov trace on the bt--algebra \cite{aijuMMJ1}, we derive the invariants  after the usual method of rescaling and normalization originally due to V. Jones \cite{joAM}.
\smallbreak
Section 6 introduces the tied singular link  which is nothing more than a classical  singular link with ties, or,  equivalently, a  tied link with some singular crossings. We define then the combinatoric tied singular links, for short  cts--links. This definition (Definition \ref{cts}) is the natural extension of the combinatoric tied links (Definition \ref{comtiedlinks}). The algebraic counterpart of cts--links is provided: the monoid of tied singular links (Definition \ref{monotsb}). This monoid, denoted by $TSB_n$, is defined trough a presentation; however,   we prove  in Theorem \ref{wreathTSB}    that it can be obtained, in the same way as $TB_n$,  as a semidirect product,   denoted by  $\PP_n\rtimes SB_n$,  between   $\PP_n$ and  the singular braid monoid   $SB_n$ \cite{baLMP, biBAMS, smLN}. The section ends  proving, respectively, in Theorems \ref{AlexanderTSL} and \ref{MarkovTSL} the Alexander and Markov theorem for cts--links.
  \smallbreak
 Section 7 has two subsections: in the first one, we lifts the invariants $\Phi_{\x,\y}$, $\Psi_{\x,\y}$, $\Phi_{\x,\y}'$ and $\Psi_{\x,\y}'$ to cts--links, this is done simply by extending the domain of the defining morphism of these invariants from $SB_n$ to $TSB_n$. This is a simple matter since  $TSB_n$ is decomposed as $\PP_n\rtimes SB_n$, see Proposition \ref{thomopsiphi}. In the second subsection, we prove respectively in Theorems \ref{theoremPhi}, \ref{theoremPsi} and  \ref{theoremPhiPsiprime}, that the invariants $\Phi_{\x,\y}$, $\Psi_{\x,\y}$, $\Phi'_{\x,\y}$  and  $\Psi'_{\x,\y}$
can be defined through   skein rules.
\smallbreak
Section 8 is devoted to the  comparisons of the invariants defined here among them and and also with the four--variable polynomial invariant for singular links defined by Paris and Rabenda in  \cite{paraALIF}. Notably,  Theorem \ref{SS} clarifies  the differences between the $\Phi_{\x,\y}$'s the $\Psi_{\x,\y}$'s  with  respect to  the  parameters $\x$ and $\y$.  Finally,  in Theorems \ref{comparison1}, \ref{comparison2}  and  Propositions \ref{comparison3} and \ref{comparison4},  we  give examples showing that our invariants are   more powerful than   the Paris--Rabenda invariant.

\section{Preliminaries}
  In the present section we  recall principally  the  definitions and  main facts on set partitions and on  the bt--algebra. The paper is  in fact  based on these  objects.
\subsection{Set partitions}
  For $n\in {\Bbb N}$, we denote by ${\bf n}$ the set  $\{1, \ldots , n\}$ and by $\Psf$ the set formed by the set partitions of ${\bf n}$, that is, an  element of $\Psf$ is a collection    $I  = \{I_1, \ldots , I_k\}$ of pairwise--disjoint non--empty sets whose union is ${\bf n}$; the sets $I_1, \ldots , I_k$ are called the blocks of $I$; the cardinal of $\Psf$, denoted $b_n$, is called the   $n^{th}$ Bell number. Further,
$(\Psf , \preceq )$ is a poset with partial order defined as follows:
$I\preceq J$ if and only if each block of $J$ is  a union of blocks of $I$.

We shall use  the  following  scheme  of  a    set  partition in $\PP_n$,  according to the standard representation by arcs, see \cite[ Subsection 3.2.4.3]{MaBook}, that is:  the   point $i$ is   connected  by an  arc to  the  point $j$,    if  $j$ is   the minimum in the  same  block of $i$  satisfying $j>i$.   In   Figure \ref{Fig01}  a set partition in  representation by arcs.

\begin{center}
 \begin{figure}[H]
 \includegraphics{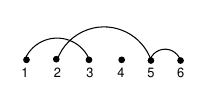}
 \caption{ Scheme of  the  partition  $I=\{\{1,3\},\{2,5,6\},\{4\}\}$. }\label{Fig01}
 \end{figure}
 \end{center}

The representation by arcs of a set partition   induces  a
natural indexation of its blocks. More precisely, we say  that  the blocks $I_j$'s of the set partition $I= \{I_1, \ldots , I_m\}$ of ${\bf n}$ are  {\it standard indexed} if $\min(I_j)< \min(I_{j+1})$, for  all $j$.   For instance, in the set partition of Figure \ref{Fig01} the blocks are indexed as:
$I_1= \{1,3\}$, $I_2=\{2,5,6\}$ and $I_3=\{4\}$.

As usual we denote by $S_n$ the symmetric group on $n$ symbols and we set  $s_i=(i, i+1)$.
The permutation action of $S_n$ on ${\bf n}$  inherits, in the obvious way,   an action of $S_n$ on $\Psf$  that is,
for $I=\{I_1, \ldots , I_m\}$ we have
\begin{equation}\label{SnOnPn}
w(I) :=\{w(I_1), \ldots , w(I_m)\}.
\end{equation}

Notice that this action   preserves  the cardinal of  each block   of  the  set partition.

We shall say that two set partitions $I$ and $I'$ in $\Psf$ are conjugate, denoted by $I \sim I'$, if there exits $w\in S_n$ such that, $I' = w(I) $; if it is necessary to precise   such   $w$, we write $I \sim_w I'$.
Further, observe that if  $I$ and  $I^{\prime}$ are  standard  indexed with  $m$  blocks,  then the  permutation $w$  induces a permutation    $w_{I,I'}$ 
of $S_m$  acting on   the indices of the blocks.

\begin{example}\label{exw} Let    $I=\{\{1,2\}_1,\{3\}_2,\{4,5\}_3,\{6\}_4\}$ and  $I^{\prime}=\{ \{1\}_1,\{2,5\}_2,\{3,6\}_3\{4\}_4\} $, so  $n=6$ and   $m=4$. We have
$I \sim_w I^{\prime}$, where:
$$
w=(1,6)(2,3,4,5)\quad \text{and}\quad   w_{I,I^{\prime}}=  (1,3,2,4).
$$
\end{example}

Given a  permutation  $w\in S_n$ and  writing $w=c_1\cdots c_m$ as
product of disjoint cycles, we denote by $K_w$  the set partition  whose blocks are the cycles $c_i$'s, regarded now as
subsets of $\bf n$.    Reciprocally, given a set partition $I=\{I_1, \ldots ,I_m\}$ of $\bf n$ we denote by $w_I$ an element of $S_n$ whose cycles are the blocks $I_i$'s.   Moreover, we shall say that the cycles of $w_I$ are standard indexed, if they are indexed according to the standard indexation of $I$.

\begin{notation} \rm
When there is no risk of  confusion,   we  will  omit in the  partitions the  blocks  with a  single  element.
 \end{notation}

\subsection{The bt--algebra}

Let $\u$ be  an indeterminate and set ${\Bbb K}= {\Bbb C}(\u)$.

\begin{definition} [See \cite{ajICTP1, rhJAC, aijuMMJ1}] The bt--algebra , denoted by $\E_n(\u)$, is defined by $\E_1(\u):={\Bbb K}$  and for $n\geq  2$ as   the unital associative ${\Bbb K}$--algebra, with unity $1$, defined by braid generators $T_1,\ldots , T_{n-1}$ and ties generators $E_1,\ldots , E_{n-1}$ subjected to the following relations:
\begin{eqnarray}
E_iE_j & =  &  E_j E_i \qquad \text{ for all $i,j$},
\label{bt1}\\
E_i^2 & = &  E_i\qquad \text{ for all $i$},
\label{bt2}\\
E_i T_j  & = &
 T_j E_i \qquad \text{for $\vert i  -  j\vert >1$},
\label{bt3}\\
E_i T_i & = &   T_i E_i  \qquad
\text{ for $\vert i - j\vert =1$},
\label{bt4}\\
E_iT_jT_i & = &  T_jT_iE_i \qquad \text{ for $\vert i  -  j\vert =1$},
\label{bt5}\\
E_iE_jT_i & = & E_j T_i E_j  \quad = \quad T_iE_iE_j \qquad \text{ for $\vert i  -  j\vert =1$},
\label{bt6}\\
 T_i T_j & = & T_j T_i\qquad \text{ for $\vert i - j\vert > 1$},
 \label{bt7}\\
 T_i T_j T_i& = & T_j T_iT_j\qquad \text{ for $\vert i - j\vert = 1$},
 \label{bt8}\\
 T_i^2 & = & 1  + (u-1)E_i + (u-1)E_i T_i\qquad \text{ for all $i$}.
 \label{bt9}
\end{eqnarray}
\end{definition}
Notice that every  $T_i$ is invertible,  and
\begin{equation}\label{Tinverse}
T_i^{-1} = T_i + (u^{-1} - 1)E_i + (u^{-1} - 1)E_iT_i.
\end{equation}

The bt--algebra is finite dimensional. Moreover, there is a basis defined by S. Ryom--Hansen;   we    describe here the construction of   this  basis,  because    some   elements  of it admit  analogous that  will be  used    in Section 2.

For $i<j$, we define $E_{i,j}$ by
 \begin{equation}\label{Eij}
 E_{i,j} = \left\{\begin{array}{ll}
 E_i & \text{for} \quad j = i +1\\
 T_i \cdots T_{j-2}E_{j-1}T_{j-2}^{-1}\cdots T_{i}^{-1}& \text{otherwise.}
 \end{array}\right.
 \end{equation}
 For any  nonempty  subset $J$  of $\bf n$ we define $E_J=1$  for  $|J|=1$ and otherwise by
$$
 E_J := \prod_{(i,j )\in J\times J, i<j}  E_{i,j}.
$$
Note that $E_{\{i,j\}} = E_{i,j}$.
For $I = \{I_1, \ldots , I_m\} \in \Psf$,  we define
 $E_I$  by
 \begin{equation}\label{EI}
 E_I = \prod_{k}E_{I_k}.
 \end{equation}
Now,  if   $w=s_{i_1}\cdots s_{i_k}$ is a reduced expression of $w\in S_n$, then the  element $T_w:= T_{i_1}\cdots T_{i_k}$ is well defined.
The action of $S_n$ on $\Psf$ is inherited from the  $E_{I}$'s and  we have:

\begin{equation}\label{w(E)}
T_w E_I T_w^{-1} =  E_{w(I)}\qquad (\text{see \cite[Corollary 1]{rhJAC}}).
\end{equation}

\begin{theorem}\label{tracebt}\cite[Corollary 3]{rhJAC} \label{basEn}
The set $\{E_I T_w  \,; \,w\in S_n,\, I\in  \Psf \}$ is a ${\Bbb K}$--linear basis of
${\mathcal E}_n(\u)$. Hence the dimension of ${\mathcal E}_n(\u)$ is $b_nn!$.
\end{theorem}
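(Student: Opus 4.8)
The plan is to prove the two halves separately: first that the set $\mathcal B:=\{E_IT_w\;:\;w\in S_n,\ I\in\Psf\}$ spans $\E_n(\u)$ over ${\Bbb K}$ — which forces $\dim_{\Bbb K}\E_n(\u)\le b_nn!$ — and then that $\mathcal B$ is ${\Bbb K}$--linearly independent, i.e. $\dim_{\Bbb K}\E_n(\u)\ge b_nn!$. Spanning is a routine normal--form computation with the defining relations; independence is the real content, which I would reduce to a flatness/specialization statement.

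For spanning, set $V:=\mathrm{span}_{\Bbb K}(\mathcal B)$. Since the discrete partition $\hat 0=\{\{1\},\dots,\{n\}\}$ has $E_{\hat 0}=1$, we have $1=E_{\hat 0}T_e\in V$, so it suffices to check that $V$ is stable under left multiplication by each $E_i$ and each $T_i$: then $V$ is a left ideal containing $1$, hence $V=\E_n(\u)$. For $E_i$: one checks from \eqref{bt1}--\eqref{bt6} (this is part of Ryom--Hansen's analysis of the $E_I$'s) that $E_iE_I=E_{I'}$, where $I'$ is obtained from $I$ by merging the blocks containing $i$ and $i+1$; hence $E_i\cdot E_IT_w=E_{I'}T_w\in V$. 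For $T_i$: by \eqref{w(E)} we have $T_iE_I=E_{s_i(I)}T_i$, so $T_i\cdot E_IT_w=E_{s_i(I)}\,T_iT_w$. If $\ell(s_iw)>\ell(w)$ then $T_iT_w=T_{s_iw}$ and we are done. If $\ell(s_iw)<\ell(w)$, write $w=s_iw'$ with $\ell(w')=\ell(w)-1$, so $T_w=T_iT_{w'}$, and by \eqref{bt9},
\[
T_iT_w=T_i^{2}T_{w'}=\bigl(1+(u-1)E_i+(u-1)E_iT_i\bigr)T_{w'}=T_{w'}+(u-1)E_iT_{w'}+(u-1)E_iT_w .
\]
Multiplying on the left by $E_{s_i(I)}$ and again absorbing $E_{s_i(I)}E_i$ into a single $E_J$ writes $T_i\cdot E_IT_w$ as a ${\Bbb K}$--combination of elements $E_JT_{w'}$ and $E_JT_w$, all in $V$. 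This proves spanning.

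For independence I would work over $A:={\Bbb C}[u^{\pm 1}]$, with $\mathbf E_n$ the $A$--algebra on the same presentation, so that $\E_n(\u)=\mathbf E_n\otimes_A{\Bbb K}$; the computation above uses only $A$--defined relations, so $\mathbf E_n$ is generated as an $A$--module by the $b_nn!$ elements of $\mathcal B$. At $u=1$ the relations \eqref{bt1}--\eqref{bt9} degenerate to: the $T_i$ satisfy $T_i^{2}=1$ together with the braid relations, the $E_i$ are commuting idempotents, and \eqref{bt3}--\eqref{bt6} encode exactly the interaction present in the monoid $\PP_n\rtimes S_n$; thus there is a surjection $\mathbf E_n/(u-1)\mathbf E_n\twoheadrightarrow{\Bbb C}[\PP_n\rtimes S_n]$ which, combined with the spanning bound, is an isomorphism, so $\dim_{\Bbb C}\mathbf E_n/(u-1)\mathbf E_n=b_nn!$. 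It then remains to see that this dimension survives at the generic point, i.e. that $\mathbf E_n$ is $A$--flat (equivalently, since $A$ is Dedekind and $\mathbf E_n$ finitely generated, $A$--torsion free); then $\mathbf E_n$ is locally free of constant rank $b_nn!$, whence $\dim_{\Bbb K}\E_n(\u)=b_nn!$, and with the spanning half $\mathcal B$ is a basis.

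The main obstacle is exactly this flatness/non--collapse input. I would obtain it from a concrete faithful ``tensor space'' representation of $\mathbf E_n$ over $A$ — $E_i$ acting by a diagonal projection onto ``equal label'' vectors and $T_i$ by the corresponding Hecke--type $R$--matrix — on $(A^{m})^{\otimes n}$ with $m$ large, which embeds $\mathbf E_n$ into the free $A$--module $\mathrm{End}_A\bigl((A^{m})^{\otimes n}\bigr)$ and so forces torsion freeness. Alternatively one can avoid the representation and verify directly, by a Bergman--type diamond lemma, that the rewriting rules used in the spanning step ($E_iE_I\!\to\!E_{I'}$, $T_iE_I\!\to\!E_{s_i(I)}T_i$, $T_iT_w\!\to\!T_{s_iw}$ when the length increases, \eqref{bt9} when it decreases) are confluent, the overlap ambiguities to resolve being those coming from \eqref{bt5}--\eqref{bt8}; or, more representation--theoretically, one establishes generic semisimplicity of $\E_n(\u)$ and matches $b_nn!$ with the sum of squares of the dimensions of its simple modules. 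Any of these routes closes the argument, but each carries the technical weight, whereas the spanning half is essentially immediate given \eqref{w(E)} and \eqref{bt9}.
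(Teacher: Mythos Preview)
The paper does not prove this statement: it is quoted as \cite[Corollary 3]{rhJAC} and used as a black box, so there is no in--paper argument to compare your proposal against.

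On the proposal itself: the spanning half is correct and is indeed routine once one has \eqref{w(E)} and the quadratic relation \eqref{bt9}. The independence half is where the content lies, and you honestly flag this, but you only \emph{list} candidate strategies (tensor--space representation, diamond lemma, generic semisimplicity and a sum--of--squares count) without carrying any of them through; as written the argument is therefore incomplete. For what it is worth, the first route you sketch --- an explicit action of $\E_n(\u)$ on $(A^m)^{\otimes n}$ with $E_i$ a diagonal projector and $T_i$ a Hecke $R$--matrix --- is essentially the machinery Ryom--Hansen develops in \cite{rhJAC}, and his Corollary~3 is deduced from that construction; so executing that option would amount to reproducing the cited proof rather than giving a new one.
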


 The theorem above implies that  ${\mathcal E}_n(\u)\subseteq {\mathcal E}_{n+1}(\u)$, for all $n$. Denote ${\mathcal E}_{\infty}(\u)$ the inductive limit
associated to these inclusions and by $\rho$ the Markov trace defined on ${\mathcal E}_{\infty}(\u)$. More precisely,  fixing  two commutative independent variables  $\a$ and $\b$,  we have the following theorem.
\begin{theorem}\label{tracerho}\cite[Theorem 3]{aijuMMJ1}
There exists a  unique family $\rho :=\{\rho_n\}_{n\in {\Bbb N}}$, where  $\rho_n$'s are linear maps, defined inductively,  from $\mathcal{E}_n(\u)$ in   $\mathbb{K}[\a, \b]$   such  that  $\rho_n(1) =1$ and satisfying,  for all   $X, Y \in\mathcal{E}_n(\u)$,    the following rules:
\begin{enumerate}
\item $\rho_n (XY)=\rho_n(YX)$,
\item $\rho_{n+1}(XT_n)=\rho_{n+1}(XT_nE_n) = \a\rho_n(X)$,
\item $\rho_{n+1}(XE_n)= \b\rho_n(X)$.
\end{enumerate}
\end{theorem}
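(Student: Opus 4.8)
The plan is to build the family $\rho=\{\rho_n\}$ by induction on $n$ and to read off uniqueness from the same argument; the base case is $\rho_1=\mathrm{id}_{\mathbb K}$, forced by $\rho_1(1)=1$ and linearity. The structural engine is a \emph{relative normal form}: every element of $\E_{n+1}(\u)$ is a $\mathbb K$--linear combination of elements $x\,g\,y$ with $x,y\in\E_n(\u)$ and $g\in\{1,\,T_n,\,E_n,\,E_nT_n\}$. I would derive this from the Ryom--Hansen basis $\{E_IT_w\}$ of Theorem~\ref{basEn}. For $w\in S_{n+1}$, factoring $T_w=T_{w'}T_v$ with $w'\in S_n$ and $v$ a minimal representative of $S_n\backslash S_{n+1}$ gives $T_w\in\E_n(\u)\cup\E_n(\u)\,T_n$, since $T_v\in\{1\}\cup\{T_j\cdots T_{n-1}T_n\}$. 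For $I\in\PP_{n+1}$, if $n+1$ is a singleton block then $E_I\in\E_n(\u)$; otherwise, using \eqref{Eij} (which writes each $E_{m,n+1}$ as $P\,E_n\,P^{-1}$ with $P=T_m\cdots T_{n-1}\in\E_n(\u)$) together with \eqref{Tinverse} and the collapsing identities \eqref{bt4}--\eqref{bt6}, one rewrites $E_I$ as a combination of terms $a\,E_n\,b$ with $a,b\in\E_n(\u)$. Multiplying the two factorizations and again invoking \eqref{bt3}--\eqref{bt6} to absorb repeated or badly placed $E_n$'s and $T_n$'s leaves exactly the asserted form.

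Granting the normal form, \textbf{uniqueness} is immediate. Given $\rho_n$, write $z\in\E_{n+1}(\u)$ as $\sum_k c_k\,x_kg_ky_k$; by rule~(1), $\rho_{n+1}(x_kg_ky_k)=\rho_{n+1}\big((y_kx_k)\,g_k\big)$ with $y_kx_k\in\E_n(\u)$, so rules~(2),(3) — with $E_nT_n=T_nE_n$ by \eqref{bt4}, and using that $\rho_{n+1}$ restricts to $\rho_n$ on $\E_n(\u)$, which is implicit in $\{\rho_n\}$ being a single inductively defined family (a trace on $\E_\infty(\u)$) — force
\[
\rho_{n+1}(x_kg_ky_k)=
\begin{cases}
\rho_n(y_kx_k), & g_k=1,\\
\a \rho_n(y_kx_k), & g_k\in\{T_n,\,E_nT_n\},\\
\b \rho_n(y_kx_k), & g_k=E_n.
\end{cases}
\]
Thus $\rho_{n+1}$ is completely determined by $\rho_n$.

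For \textbf{existence} I would turn this into a definition: refine the Ryom--Hansen basis of $\E_{n+1}(\u)$ to one adapted to the normal form, set $\rho_{n+1}$ on each basis vector by the case formula above (with $\rho_n$ available by induction), and extend $\mathbb K$--linearly; prescribing values on a basis, this is unambiguous. One then verifies rules~(1)--(3). Rules~(2),(3) are essentially built in and follow by a direct computation with the basis multiplication. The real content is the trace identity~(1): it suffices to prove $\rho_{n+1}(Xg)=\rho_{n+1}(gX)$ for $X$ a basis vector and $g$ a single generator $T_i$ or $E_i$ ($1\le i\le n$), since the general identity then follows by writing $Y$ as a word in the generators and transporting them across one at a time. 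For $i\le n-1$ (so $g\in\E_n(\u)$) one expands $Xg$ and $gX$ in the basis, brings both to normal form, and concludes with the inductive hypothesis that $\rho_n$ is a trace; for $g\in\{T_n,E_n\}$ one pushes $g$ through $X$ via \eqref{bt3}--\eqref{bt9}, reduces, and matches both sides against rules~(2),(3).

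The \textbf{main obstacle} is precisely this verification of~(1) on the basis: a finite but delicate case analysis, because multiplying a basis vector $E_IT_w$ by a generator entangles the set--partition part with the symmetric-group part and can create or destroy a factor $E_n$, and one must confirm that every resulting reduction yields the same element of $\mathbb K[\a,\b]$ on both sides. A secondary technical point, needed even to formulate the reductions, is the relative normal form itself, whose proof must treat the generators $E_{m,n+1}\notin\E_n(\u)$ carefully; here the reduction identities for products of the $E_{i,j}$'s from \cite{rhJAC} are the crucial tool. Everything else — the base case, linearity, $\rho_n(1)=1$, and uniqueness — is routine.
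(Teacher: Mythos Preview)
The paper does not prove this theorem at all: it is quoted verbatim as \cite[Theorem 3]{aijuMMJ1} and used as a black box. So there is no ``paper's own proof'' to compare your proposal against.

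That said, your outline is the standard inductive construction of a Markov trace and is, in broad strokes, the approach taken in \cite{aijuMMJ1}. A few comments. First, your reading of the implicit compatibility $\rho_{n+1}\!\mid_{\E_n(\u)}=\rho_n$ is correct and is indeed what ``defined inductively'' (and the subsequent use of $\rho$ as a trace on $\E_\infty(\u)$) encodes; without it uniqueness would fail. Second, your relative normal form is right in spirit, but note that in \cite{aijuMMJ1} the authors work with an explicit inductive basis of $\E_{n+1}(\u)$ over $\E_n(\u)$ (built from the $E_I$'s and reduced words ending in $T_n\cdots T_j$), rather than deducing it a posteriori from the Ryom--Hansen basis; this makes the reductions cleaner. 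Third, you correctly flag the verification of rule~(1) as the crux, but your sketch understates the work: in the cited paper this is a lengthy case analysis spanning several lemmas, and the delicate cases are exactly those where moving a generator across $X$ produces a word that is \emph{not} in normal form and must be re-expanded using \eqref{bt9} and \eqref{Tinverse}, so that the trace values match only after nontrivial cancellations in $\mathbb{K}[\a,\b]$. Your proposal names this obstacle but does not indicate how any of these cancellations actually occur, so as written it is a plan rather than a proof.
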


\begin{remark}\label{Vi}\rm
Extending the field ${\Bbb K}$ to ${\Bbb K}(\v)$ with $\v^2=\u$,
 we can define  (cf. \cite[Subsection 2.3]{maIMRN}):
 \begin{equation}\label{defVi} V_i := T_i + (\v^{-1}-1)E_iT_i.\end{equation} Then the $V_i$'s and the $E_i$'s  satisfy the relations (\ref{bt3})--(\ref{bt8}) and the quadratic relation (\ref{bt9}) is transformed in
\begin{equation}\label{newbt9}
V_i^2 = 1 + (\v - \v^{-1})E_iV_i.
\end{equation}
So,
\begin{equation}\label{Ttildeinverse}
V_i^{-1} = V_i - (\v - \v^{-1})E_i.
\end{equation}

In \cite{chjukalaIMRN, esryJPAA, jaPoJLMS}  this quadratic relation is used to define the bt--algebra. Although at algebraic level these algebras are the same, we will see  that they lead on  to different invariants. Thus, in order to distinguish these two presentations of the bt--algebra, we will write $\mathcal{E}_n(\v)$ when the bt--algebra is  defined by using the quadratic relation (\ref{newbt9}).
\end{remark}

\section{The tied braids monoid}

The goal of this section is to prove   Theorem \ref{TBnsemidirect} which says that the tied braid monoid  $TB_n$ \cite{aijuJKTR1}, defined originally by generators and relations,  can be realized as  a monoid constructed from   the monoid of set partitons of ${\bf n}$    and the braid group on $n$--strand.
\subsection{  The   monoid of set partitions }\label{monoidPn}

The set $\Psf$ has a structure of  commutative monoid  with  product $\ast$. More precisely,  the product $I\ast J$ between $I$ and $J$ is  defined as the minimal  set partition, containing $I$ and $J$,  according to  $\preceq$;  the identity of this monoid is $ {\bold 1}_n := \{\{1\}, \{2\}, \ldots ,\{n\}\} $. Observe that:
\begin{equation}\label{partial}
I \ast J = J, \quad \text{whenever}\quad  I\preceq J,
\end{equation}
\begin{equation}\label{astwithw}
I\ast J = I \ast w_I (J).
\end{equation}

For every $1\leq i < j\leq n$ with $i\not= j$, define $\mu_{i,j}\in \Psf$ as the set partition whose blocks are $\{i,j\}$ and $\{k\}$ where $1\leq k\leq n$ and $k\not=i,j$.    We shall write  $\mu_{i,j}\mu_{k,h}$  instead of $\mu_{i,j} \ast \mu_{k,h}$; we have
\begin{equation}\label{relationsPn}
\mu_{i,j}^2 = \mu_{i,j} \quad \text{and }\quad \mu_{i,j}\mu_{r,s} = \mu_{r,s}\mu_{i,j}.
\end{equation}
Moreover, we have the following proposition.
\begin{proposition}\label{presentationPn}
The monoid $\Psf$  is generated by set partitions  $\mu_{i,j}$'s.
 \end{proposition}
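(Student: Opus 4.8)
The plan is to show that every set partition $I = \{I_1, \ldots, I_m\}$ of $\mathbf{n}$ can be written as a $\ast$-product of the generators $\mu_{i,j}$. The key observation is that, by definition of $\ast$ as the join in the poset $(\Psf, \preceq)$, the product $\mu_{i_1,j_1} \ast \cdots \ast \mu_{i_r,j_r}$ is exactly the finest set partition in which each pair $\{i_k, j_k\}$ lies in a common block; equivalently, thinking of the pairs $(i_k, j_k)$ as edges of a graph on the vertex set $\mathbf{n}$, the blocks of the product are the connected components of that graph. So the task reduces to: given $I$, choose a set of edges whose connected components are precisely the blocks $I_1, \ldots, I_m$.

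First I would dispose of the trivial cases: a block $\{k\}$ with a single element contributes nothing (it is already a component consisting of an isolated vertex, and $\mu$ has no generator for it), so we only need edges inside blocks of size $\geq 2$. For each block $I_k = \{a_1 < a_2 < \cdots < a_{p}\}$ with $p \geq 2$, I would take the "path" edges $\{a_1, a_2\}, \{a_2, a_3\}, \ldots, \{a_{p-1}, a_p\}$ — this is exactly the arc representation of the block described in Section 2 — giving the partial product $\mu_{a_1,a_2} \ast \mu_{a_2,a_3} \ast \cdots \ast \mu_{a_{p-1},a_p}$, whose unique non-singleton block is $I_k$. Then I would form the $\ast$-product over all blocks $I_k$ of size $\geq 2$. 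Since the $\mu_{i,j}$'s commute (relation \eqref{relationsPn}), the order is irrelevant, and because the blocks $I_k$ are pairwise disjoint the connected components of the resulting edge set are precisely the $I_k$'s of size $\geq 2$ together with the isolated singleton blocks. Hence the product equals $I$.

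To make the "connected components" claim precise I would prove the small lemma that $\mu_{i_1,j_1}\ast\cdots\ast\mu_{i_r,j_r}$ has $p$ and $q$ in the same block if and only if there is a sequence $p = x_0, x_1, \ldots, x_t = q$ with each $\{x_{\ell-1}, x_\ell\}$ equal to some $\{i_k, j_k\}$; this follows by induction on $r$ from the definition of $\ast$ as the minimal set partition $\preceq$-above both factors, using that $\mu_{i,j}$ has $\{i,j\}$ as its only non-singleton block. The only mild subtlety — and the one point where one must be slightly careful rather than obstructed — is the bookkeeping for singleton blocks: one must observe that taking the join with additional $\mu_{i,j}$'s never splits an existing block, so singletons of $I$ survive, and conversely that no spurious merging occurs because we only ever use edges internal to a single block. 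Beyond that the argument is routine, so I do not expect any genuine obstacle.
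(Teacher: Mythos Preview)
Your argument is correct. The paper actually states this proposition without proof, treating it as evident from the definition of $\ast$ as the join in $(\Psf,\preceq)$; the connected-components description you spell out is exactly the mechanism the paper later makes explicit in Lemma~\ref{<X>} (for the analogous elements $\eta_I$ in $TB_n$), so your approach is the intended one.
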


\begin{definition}\label{Pinf}
We denote by $\mathsf{P}_{\infty}$, the inductive limit monoid associated to the    family $\{(\Psf,\iota_n)\}_{n\in \mathbb{N}}$, where $\iota_n$ is the monoid monomorphisms from $\Psf$ into  $\mathsf{P}_{n+1}$,  such that for $I\in \Psf$,  the image $\iota_n(I)\in  \mathsf{P}_{n+1}$    is defined   by adding   to  $I$ the   block $\{n+1\}$. Observe that the inclusions   preserve  $\preceq$, that is, if $I\preceq J$ for  $I, J\in
\Psf$, then  $I\preceq J$  when  $I, J$ are considered as  elements of  $\mathsf{P}_{n+1}$.
\end{definition}

\subsection{The tied braid monoid}
  Denote   $B_n$ the braid group on $n$--strand, that is   the group presented by the elementary braids $\sigma_1 , \ldots , \sigma_{n-1}$ subjected to the following relations: $\sigma_i\sigma_j = \sigma_j\sigma_i$  for all $i,j$ s.t. $\vert i-j\vert >1$  and $\sigma_i\sigma_{i+1}\sigma_i = \sigma_{i+1}\sigma_i\sigma_{i+1}$ for $1\leq i\leq n-2$.

Recall now that we have a natural epimorphism $\pi: B_n \longrightarrow S_n$
defined by mapping $\sigma_i$ to $s_i$. We denote by $\pi_{\alpha}$ the image of $\alpha$ by $\pi$; thus
$\pi_{\sigma_i}= \pi_{\sigma_i^{-1}}=s_i$.
 The epimorphism  $\pi$  defines an action of $B_n$ on $\Psf$:  namely,  the result of  $\beta\in B_n$, acting on $I\in \Psf$, is $\pi_{\beta}(I)$, see (\ref{SnOnPn}).
This  action of $B_n$ on $\Psf$ defines   a monoid structure on  the cartesian product   $\Psf\times B_n $,   where the multiplication is defined as follows,
  \begin{equation} \label{mult}
(I, \alpha  ) (J, \beta ) = (I\ast \pi_\alpha(J), \alpha\beta ).
\end{equation}
We shall denote this monoid by $\Psf \rtimes B_n$.
Note that $B_n$ and $ \Psf$ can be regarded as  submonoids of     $\Psf\rtimes B_n$. More precisely, an  element $\beta	 \in B_n$ correspond to $(1, \beta)$ (which will be  denoted simply  by $\beta$ if  there  is  no  risk of confusion); an element $I\in \Psf $ corresponds to the element $\widetilde{I} := (I,1)$. The decomposition $(I,\beta) = (I,1)(1, \beta)$, together with the Proposition \ref{presentationPn}, implies that  $\Psf \rtimes B_n$ is generated by the $\widetilde{\mu}_{i,j}$'s and the  $\sigma_i$'s. Now, we also have,  by eq. (\ref{mult}):
\begin{equation}\label{sigmaI}
(1,\beta)(I,1)(1,\beta^{-1}) = (\pi_\beta(I), 1).
\end{equation}

  Thus, by taking $I=\mu_{i,i+1}$  and $\beta =\sigma_{j-1}\sigma_{j-1}\cdots \sigma_{i+1}$ with $j>i+1$, we deduce that every generator $\widetilde{\mu}_{i,j}$ can be written as a word in the $\widetilde{\mu}_{i, i+1}$ and $\sigma_{i+1}^{\pm 1},\ldots,  \sigma_{j-1}^{\pm 1}$,  since, for $j>i+1$
$$
\mu_{i,j}= s_{j-1} s_{j-2}\cdots s_{i+1} (\mu_{i,i+1}).
$$
 Hence we have the following lemma.
\begin{lemma}\label{generators}
The monoid $\Psf \rtimes B_n$ is generated by $\widetilde{\mu}_{1,2}, \ldots ,\widetilde{\mu}_{n-1, n}, \sigma_1^{\pm 1},\ldots ,\sigma_{n-1}^{\pm 1}$.
\end{lemma}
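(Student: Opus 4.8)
The plan is to assemble the ingredients already established above. The decomposition $(I,\beta)=(I,1)(1,\beta)=\widetilde{I}\,(1,\beta)$ shows that every element of $\Psf\rtimes B_n$ is a product of an element of the form $\widetilde{I}$ and an element $(1,\beta)$ with $\beta\in B_n$; since $I\mapsto\widetilde I$ is a monoid homomorphism (by (\ref{mult}), $(I,1)(J,1)=(I\ast J,1)$), Proposition~\ref{presentationPn} expresses each such $\widetilde{I}$ as a product in $\Psf\rtimes B_n$ of the generators $\widetilde{\mu}_{i,j}$, while $B_n$ is generated by the $\sigma_i^{\pm 1}$. Hence $\Psf\rtimes B_n$ is generated by the full family $\{\widetilde{\mu}_{i,j}\,;\,1\le i<j\le n\}\cup\{\sigma_i^{\pm 1}\,;\,1\le i\le n-1\}$. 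So it remains only to show that each $\widetilde{\mu}_{i,j}$ with $j>i+1$ already lies in the submonoid $M$ generated by the shorter list $\widetilde{\mu}_{1,2},\ldots,\widetilde{\mu}_{n-1,n},\sigma_1^{\pm 1},\ldots,\sigma_{n-1}^{\pm 1}$.

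For this I would argue by induction on $j-i$, the case $j-i=1$ being trivial. For $j-i>1$, note that $\pi_{\sigma_{j-1}}=s_{j-1}=(j-1,j)$ fixes $i$ (as $i<j-1$) and sends $j-1$ to $j$, so $\pi_{\sigma_{j-1}}(\mu_{i,j-1})=\mu_{i,j}$. Feeding $I=\mu_{i,j-1}$ and $\beta=\sigma_{j-1}$ into the conjugation identity~(\ref{sigmaI}) gives
\[
\widetilde{\mu}_{i,j}=(1,\sigma_{j-1})\,\widetilde{\mu}_{i,j-1}\,(1,\sigma_{j-1}^{-1})=\sigma_{j-1}\,\widetilde{\mu}_{i,j-1}\,\sigma_{j-1}^{-1}.
\]
Since $\widetilde{\mu}_{i,j-1}\in M$ by the inductive hypothesis and $\sigma_{j-1}^{\pm 1}\in M$, we get $\widetilde{\mu}_{i,j}\in M$. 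Unrolling the induction yields the closed form $\widetilde{\mu}_{i,j}=\sigma_{j-1}\sigma_{j-2}\cdots\sigma_{i+1}\,\widetilde{\mu}_{i,i+1}\,\sigma_{i+1}^{-1}\cdots\sigma_{j-1}^{-1}$, matching the displayed formula $\mu_{i,j}=s_{j-1}s_{j-2}\cdots s_{i+1}(\mu_{i,i+1})$ preceding the lemma.

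There is essentially no obstacle here: the statement follows at once from the semidirect-product multiplication rule~(\ref{mult}), the conjugation formula~(\ref{sigmaI}), and the generation of $\Psf$ by the $\mu_{i,j}$'s (Proposition~\ref{presentationPn}). The only point that needs a moment's care is the index bookkeeping in the conjugating braid—i.e. verifying that $\pi_{\beta}$ carries the block $\{i,i+1\}$ of $\mu_{i,i+1}$ to $\{i,j\}$—and that is exactly what the induction on $j-i$ makes transparent.
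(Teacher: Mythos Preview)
Your proof is correct and follows essentially the same approach as the paper: the argument preceding the lemma decomposes $(I,\beta)=\widetilde{I}\,(1,\beta)$, invokes Proposition~\ref{presentationPn} to reduce to the $\widetilde{\mu}_{i,j}$'s and $\sigma_i^{\pm1}$'s, and then uses the conjugation formula~(\ref{sigmaI}) with $\beta=\sigma_{j-1}\cdots\sigma_{i+1}$ to express each $\widetilde{\mu}_{i,j}$ in terms of $\widetilde{\mu}_{i,i+1}$ and the $\sigma_k^{\pm1}$. Your inductive presentation on $j-i$ makes the index bookkeeping a bit more explicit, but the substance is the same.
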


We will see below that   $ \PP_n \rtimes B_n$  is the tied braid monoid $T\!B_n$ introduced in \cite{aijuJKTR1}.

\begin{definition}\cite[Definition 3.1]{aijuJKTR1}
$T\!B_n$ is  the monoid generated by  the elementary  braids
 $\sigma_1^{\pm 1}, \ldots , \sigma_{n-1}^{\pm 1}$  and the generators   $\eta_1, \ldots ,\eta_{n-1}$, called ties,  such the $\sigma_i$'s satisfy braid relations among them  together with the following relations:
\begin{eqnarray}
\eta_i\eta_j & =  &  \eta_j \eta_i \qquad \text{ for all $i,j$},
\label{eta1}\\
\eta_i\sigma_i  & = &   \sigma_i \eta_i \qquad \text{ for all $i$},
\label{eta2}\\
\eta_i\sigma_j  & = &   \sigma_j \eta_i \qquad \text{for $\vert i  -  j\vert >1$},
\label{eta3}\\
\eta_i \sigma_j \sigma_i & = &   \sigma_j \sigma_i \eta_j \qquad
\text{ for $\vert i - j\vert =1$},
\label{eta4}\\
\eta_i\sigma_j\sigma_i^{-1} & = &  \sigma_j\sigma_i^{-1}\eta_j \qquad \text{ for $\vert i  -  j\vert =1$},
\label{eta5}\\
\eta_i\eta_j\sigma_i & = & \eta_j\sigma_i\eta_j  \quad = \quad\sigma_i\eta_i\eta_j \qquad \text{ for $\vert i  -  j\vert =1$},
\label{eta6}\\
 \eta_i\eta_i & = & \eta_i \qquad \text{ for all $i$}.
 \label{eta7}
\end{eqnarray}

\end{definition}

 Following the  construction of Ryom--Hansen's basis  we obtain that  the elements of $T\!B_n$ can be written in the form $\eta_I\beta$, where  $\beta\in B_n$ and  $\eta_I$'s are defined  analogously  to the $E_I$'s. We are going now to explain this fact.

  As in (\ref{Eij}),  for $1\leq i<j\leq n$, we put  $\eta_{i,j} := \eta_i$ for $j=i+1$, and otherwise
$$
\eta_{i,j} := \sigma_i\sigma_{i+1}\cdots \sigma_{j-2}\eta_{j-1}\sigma_{j-2}^{-1} \cdots \sigma_{i-1}^{-1}\sigma_i^{-1}.
$$
For every $i,j$ define  $\eta_{\{i,j\}}= \eta_{\min\{i,j\},\max\{i,j\}}$. We get  the  following lemma.
\begin{lemma}\label{lemmaEtas}
For all $i<j$ and $k$ we have:
\begin{enumerate}
\item $ \sigma_k\eta_{i,j}\sigma_k^{-1} = \eta_{\{s_k(i),s_k(j)\}}$,
\item $ \sigma_k^{-1}\eta_{i,j}\sigma_k= \eta_{\{s_k(i),s_k(j)\}} $,
\item $   \eta_{i,j} = \sigma_{j-1}^{-1} \cdots \sigma_{i+1}^{-1}  \eta_{i}   \sigma_{i+1}\cdots \sigma_{j-1}$,
\item  $\alpha\eta_{i,j}= \eta_{\{s(i), s(k)\}}\alpha$,   for $\alpha\in B_n$ and $s:=\pi_{\alpha}$,
\item The elements $\eta_{i,j}$'s are commuting  and  idempotent,
\item   $\eta_{i,j}\eta_{j,k}=\eta_{i,j}\eta_{i,k}=\eta_{i,k}\eta_{j,k}=\eta_{i,j}\eta_{j,k}\eta_{i,k}$ for all $i<j<k$.
\end{enumerate}
\end{lemma}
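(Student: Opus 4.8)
The statement is Lemma \ref{lemmaEtas}, a list of six identities for the elements $\eta_{i,j}$ in $TB_n$. My plan is to prove them in an order that lets later items reuse earlier ones, and to lean as much as possible on the fact (already stated in the excerpt) that the $E_{i,j}$'s in the bt--algebra $\E_n(\u)$ satisfy the analogous relations, since the defining relations of $TB_n$ are precisely the braid--and--ties relations \eqref{eta1}--\eqref{eta7} that mirror \eqref{bt1}--\eqref{bt9}. Concretely, there is an algebra homomorphism $TB_n$ (as a monoid) $\to \E_n(\u)$ sending $\sigma_i\mapsto T_i$, $\eta_i\mapsto E_i$; but since we want identities \emph{in} $TB_n$, I will instead verify each relation directly from the monoid presentation, using the bt--algebra computation only as a guide for which braid moves to apply.

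\textbf{Steps.} First I would prove (3): $\eta_{i,j} = \sigma_{j-1}^{-1}\cdots\sigma_{i+1}^{-1}\eta_i\,\sigma_{i+1}\cdots\sigma_{j-1}$. Starting from the definition $\eta_{i,j} = \sigma_i\sigma_{i+1}\cdots\sigma_{j-2}\,\eta_{j-1}\,\sigma_{j-2}^{-1}\cdots\sigma_i^{-1}$, I would move the $\eta_{j-1}$ leftward past the $\sigma$'s using relations \eqref{eta3} (far commutation) and \eqref{eta5} (the $\eta_i\sigma_j\sigma_i^{-1} = \sigma_j\sigma_i^{-1}\eta_j$ move, applied in reverse), collapsing the conjugating word; this is the bookkeeping analogue of the identity $E_{i,j} = T_{j-1}^{-1}\cdots T_{i+1}^{-1}E_iT_{i+1}\cdots T_{j-1}$ that one checks in $\E_n(\u)$. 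Then (1) and (2): $\sigma_k\eta_{i,j}\sigma_k^{-1} = \eta_{\{s_k(i),s_k(j)\}}$. Using the expression from (3) for $\eta_{i,j}$ and again relations \eqref{eta3}--\eqref{eta5} plus braid relations among the $\sigma$'s, one pushes $\sigma_k$ through; the cases split according to whether $k\in\{i-1,i,j-1,j\}$ or $k$ is far from both $i$ and $j$. Item (2) follows from (1) by replacing $k$ with $s_k(i),s_k(j)$ and noting $s_k$ is an involution, or symmetrically by the same argument with $\sigma_k^{-1}$. Item (4), $\alpha\eta_{i,j} = \eta_{\{s(i),s(j)\}}\alpha$ for $s=\pi_\alpha$, is then immediate by writing $\alpha$ as a word in the $\sigma_k^{\pm1}$ and iterating (1)--(2) (correcting the evident typo $s(k)$ for $s(j)$ in the statement). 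Items (5) and (6) concern the commuting/idempotent behavior and the triangle relation $\eta_{i,j}\eta_{j,k} = \eta_{i,j}\eta_{i,k} = \eta_{i,k}\eta_{j,k} = \eta_{i,j}\eta_{j,k}\eta_{i,k}$ for $i<j<k$. For (5), idempotency reduces via (4) to $\eta_i^2 = \eta_i$ (relation \eqref{eta7}), and commutativity of two $\eta_{i,j}$'s reduces, after conjugating both to standard form by a common braid element (possible by (4)), to commutativity of $\eta_a$'s, which is \eqref{eta1}, together with \eqref{eta6} to handle the overlapping case. For (6), I would conjugate the whole identity by a braid $\alpha$ with $\pi_\alpha$ mapping $\{i,j,k\}$ to $\{1,2,3\}$ (using (4)), reducing to the case $(i,j,k)=(1,2,3)$, i.e. to proving $\eta_1\eta_{1,3} = \eta_1\eta_{2,3} = \eta_{1,3}\eta_{2,3} = \eta_1\eta_{1,3}\eta_{2,3}$ where $\eta_{1,3}=\sigma_1\eta_2\sigma_1^{-1}$; this is a direct $B_3$-ties computation using \eqref{eta4}, \eqref{eta5}, \eqref{eta6}, and it is the monoid shadow of Lemma/Corollary statements about the $E_{i,j}$'s in \cite{rhJAC}.

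\textbf{Main obstacle.} The genuinely delicate point is item (6) together with the overlapping case of (5): these are where the relations \eqref{eta4}--\eqref{eta6} (the ones that change indices, $\eta_i\sigma_j\sigma_i = \sigma_j\sigma_i\eta_j$ and $\eta_i\eta_j\sigma_i = \eta_j\sigma_i\eta_j = \sigma_i\eta_i\eta_j$) must be combined in exactly the right sequence, and a naive expansion produces long words that need the braid relations to re-collapse. I expect the cleanest route is to reduce everything to small $n$ (namely $n=2,3$) via the conjugation trick of (4), so that the core computations are finite checks in $TB_2$ and $TB_3$; the reduction itself is routine once (1)--(4) are in hand, so the real work is front-loaded into (1) and (3). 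Throughout, I would keep an eye on the parallel statements for $E_{i,j}$ in $\E_n(\u)$ (equations \eqref{Eij}, \eqref{w(E)}, Theorem \ref{basEn}) as a consistency check: any relation I derive for $\eta_{i,j}$ must map to a true relation for $E_{i,j}$ under $\sigma_i\mapsto T_i$, $\eta_i\mapsto E_i$, and conversely the known $E$-relations tell me no relation in the list can fail.
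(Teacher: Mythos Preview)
Your proposal is correct and follows essentially the same approach as the paper, which simply refers each item back to the corresponding computation for the $E_{i,j}$'s in \cite{rhJAC} (Lemmas 2, 3 and 5 there), noting that relation \eqref{eta5} replaces the analogue used in \cite{rhJAC}. The only cosmetic difference is the order: the paper proves (1)--(2) first and derives (3)--(4) from them, whereas you prove (3) directly from the definition and then obtain (1)--(2); both routes require the same underlying case analysis with \eqref{eta3}--\eqref{eta5}, and your reduction of (5)--(6) to small--$n$ checks via (4) is exactly the mechanism behind the cited lemmas in \cite{rhJAC}.
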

\begin{proof}
The proof of claims (1) and (2) are the same as the proof of \cite[Lemma 2]{rhJAC} but using now relation (\ref{eta5}) instead \cite[Lemma 1]{rhJAC}.

 Claims (3) and (4) are  direct consequences  of (1) and (2).

The proof of (5) is   analogous  to the proof of \cite[Lemma 3]{rhJAC}.

  The proof of (6)  is  contained in the  proof  of  \cite[Lemma 5]{rhJAC}.
\end{proof}

For every (non--empty) subset $M$  of $\bf n$, we define $\eta_M$= 1 if $\vert M\vert =1$, otherwise
\begin{equation}\label{etaM}
\eta_M := 	\prod_{(i,j)\in M^2 : i<j}\eta_{i,j}.
\end{equation}
Now, for $I=\{I_1, \ldots , I_m\}\in \mathsf{P}_n$, define $\eta_I$ as follows
\begin{equation}\label{etaI}
  \eta_I := \prod_j \eta_{I_j}.
\end{equation}

Let $X \subseteq {\bf n}\times {\bf n}$.  Observe  that  $X$  defines   an  equivalence relation on   ${\bf n}$ by setting: $i\sim_X i$
  and $i\sim_X j$  if and only if   there  is  a  chain  $i=i_1,i_2, \dots, i_m=j$ with $m>1$ in $X$ such that  either  $(i_r, i_{r+1})\in X $ or  $(i_{r+1},i_r)\in X$.
 Denote $\langle X\rangle$ the partition  of ${\bf n}$ determined by $\sim_X$.
\begin{lemma}\label{<X>}
For $X \subseteq {\bf n}\times {\bf n}$, we have
\begin{equation}\label{etaX}
 \eta_{\langle X\rangle} = \prod_{(i,j)\in X}\eta_{\{i,j\}}.
\end{equation}

\end{lemma}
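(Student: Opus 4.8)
The plan is to prove the identity $\eta_{\langle X\rangle} = \prod_{(i,j)\in X}\eta_{\{i,j\}}$ by reducing both sides to a common normal form, exploiting the idempotency and commutativity of the $\eta_{i,j}$'s (Lemma \ref{lemmaEtas}(5)) together with the ``triangle'' relation Lemma \ref{lemmaEtas}(6). First I would observe that the right-hand side is well defined: since the $\eta_{\{i,j\}}$'s commute, the product over $(i,j)\in X$ does not depend on the order of the factors, and repeated pairs may be collapsed by idempotency, so we may assume $X$ is a set of unordered pairs, i.e.\ the edge set of a simple graph $G_X$ on the vertex set $\bf n$. The partition $\langle X\rangle$ is then exactly the partition of $\bf n$ into the connected components of $G_X$. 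Because all the $\eta_{i,j}$'s commute, the product on the right splits as a product over the connected components, and likewise $\eta_{\langle X\rangle}=\prod_k \eta_{C_k}$ by \eqref{etaI}, where the $C_k$ are the components; hence it suffices to treat a single connected component. So we are reduced to proving: if $G$ is a connected graph on a finite vertex set $M\subseteq{\bf n}$ with edge set $Y$, then $\prod_{(i,j)\in Y}\eta_{\{i,j\}} = \eta_M = \prod_{(i,j)\in M^2,\ i<j}\eta_{\{i,j\}}$.

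For this reduced statement I would argue by induction on $|M|$, the base cases $|M|\le 2$ being trivial (for $|M|=1$ both sides are $1$; for $|M|=2$ both sides equal $\eta_{\{i,j\}}$). For the inductive step, the key is a spanning-tree argument: a connected graph on $M$ contains a spanning tree $T\subseteq Y$, and since $\eta_{\{i,j\}}^2=\eta_{\{i,j\}}$ we have $\prod_{(i,j)\in Y}\eta_{\{i,j\}} = \prod_{(i,j)\in T}\eta_{\{i,j\}}$; so it is enough to show that any spanning tree $T$ on $M$ yields $\prod_{(i,j)\in T}\eta_{\{i,j\}}=\eta_M$. This in turn follows if I can show that adding any single ``chord'' to $T$ does not change the product, i.e.\ that for any $i,j$ lying in the same component, $\eta_{\{i,j\}}$ already appears as a ``consequence'' of the tree edges in the sense that multiplying the tree product by $\eta_{\{i,j\}}$ leaves it fixed; iterating over all chords then turns $\prod_{(i,j)\in T}\eta_{\{i,j\}}$ into $\prod_{(i,j)\in M^2,i<j}\eta_{\{i,j\}}=\eta_M$. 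To see that a chord is redundant, take a path $i=i_1,i_2,\dots,i_r=j$ in $T$; applying Lemma \ref{lemmaEtas}(6) repeatedly along consecutive triples shows $\eta_{\{i_1,i_2\}}\eta_{\{i_2,i_3\}}\cdots\eta_{\{i_{r-1},i_r\}}$ is fixed under right-multiplication by $\eta_{\{i_1,i_r\}}$ (indeed $\eta_{\{i_1,i_2\}}\eta_{\{i_2,i_3\}}\eta_{\{i_1,i_3\}}=\eta_{\{i_1,i_2\}}\eta_{\{i_2,i_3\}}$, and then one pushes this forward along the path), and commutativity lets us move the remaining tree factors out of the way.

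The step I expect to be the main obstacle is the careful bookkeeping in this last claim — that a chord $\{i,j\}$ is absorbed by the product of edges along a tree-path from $i$ to $j$. The relation Lemma \ref{lemmaEtas}(6) is stated only for triples $i<j<k$, so to apply it along an arbitrary path one must either symmetrize it (noting $\eta_{\{a,b\}}=\eta_{\{b,a\}}$ makes the relation $\eta_{\{a,b\}}\eta_{\{b,c\}}\eta_{\{a,c\}}=\eta_{\{a,b\}}\eta_{\{b,c\}}$ symmetric in the roles of the three vertices, since all six of its cyclic/transposed forms coincide by part (6)) or induct on the path length, at each stage replacing the subpath $i_1,\dots,i_{r-1},i_r$ by using that $\eta_{\{i_1,i_{r-1}\}}$ is already absorbed to deduce $\eta_{\{i_1,i_r\}}$ is too. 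Once that lemma-internal induction is in place, the rest is the standard spanning-tree/idempotency packaging described above, and the global statement follows by taking the product over connected components.
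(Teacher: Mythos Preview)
Your proposal is correct and follows essentially the same route as the paper: the paper's proof is a one-line appeal to Lemma~\ref{lemmaEtas}(6) together with a reference to \cite[Lemma~5]{rhJAC}, and your spanning-tree/chord-absorption argument is precisely the unpacking of that citation. One small expository slip: the sentence ``since $\eta_{\{i,j\}}^2=\eta_{\{i,j\}}$ we have $\prod_{(i,j)\in Y}\eta_{\{i,j\}} = \prod_{(i,j)\in T}\eta_{\{i,j\}}$'' does not follow from idempotency alone---it is exactly the chord-absorption statement you go on to prove using (6)---so that implication should be stated \emph{after} the path-absorption argument, not before.
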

\begin{proof} It follows  from  claim (6) of  Lemma \ref{lemmaEtas},  see   \cite[Lemma 5]{rhJAC}.
\end{proof}

\begin{proposition}\label{decomposition}
The elements of $T\!B_n$ can be written in the form $\eta_I\beta$, where $I\in \mathsf{P}_n$ and $\beta\in B_n$.
\end{proposition}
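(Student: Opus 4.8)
The plan is to show that the set $S := \{\eta_I \beta : I\in \mathsf{P}_n,\ \beta\in B_n\}$ is closed under the generators $\sigma_j^{\pm 1}$ and $\eta_j$ acting on the left, and that it contains $1$; since these generate $T\!B_n$ and $S$ obviously spans (once we know it is a submonoid containing the generators), this forces $T\!B_n = S$. Concretely, given a word $\eta_I\beta$ I must rewrite $\sigma_j^{\pm 1}\eta_I\beta$ and $\eta_j\eta_I\beta$ again in the form $\eta_{I'}\beta'$.

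For the braid generators, the key tool is claim (4) of Lemma \ref{lemmaEtas} (equivalently (1) and (2)): conjugation by $\sigma_j^{\pm 1}$ sends $\eta_{i,k}$ to $\eta_{\{s_j(i),s_j(k)\}}$, hence $\sigma_j^{\pm 1}\eta_I = \eta_{s_j(I)}\sigma_j^{\pm 1}$ by multiplying over the blocks (using that the $\eta_{i,j}$'s commute, claim (5)). Therefore $\sigma_j^{\pm 1}\eta_I\beta = \eta_{s_j(I)}(\sigma_j^{\pm 1}\beta)$, which is of the required form with $I' = s_j(I) = \pi_{\sigma_j^{\pm1}}(I)$ and $\beta' = \sigma_j^{\pm1}\beta \in B_n$.

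For the ties generators, I first reduce to controlling $\eta_j\eta_I$. Write $\eta_j = \eta_{\{j,j+1\}}$; then $\eta_j\eta_I = \prod_{(i,k)\in X_I} \eta_{\{i,k\}} \cdot \eta_{\{j,j+1\}}$ where $X_I$ is the edge set $\{(i,k)\in I_\ell\times I_\ell : i<k\}$ used to build $\eta_I$ in \eqref{etaM}--\eqref{etaI}. By Lemma \ref{<X>} this product equals $\eta_{\langle X_I\cup\{(j,j+1)\}\rangle}$, i.e.\ $\eta_{I \ast \mu_{j,j+1}}$ where $\ast$ is the monoid product on $\mathsf{P}_n$ (merging the blocks of $j$ and $j+1$). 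So $\eta_j\eta_I\beta = \eta_{I'}\beta$ with $I' = I\ast\mu_{j,j+1}\in\mathsf{P}_n$, again of the required form, and $1 = \eta_{\mathbf 1_n}\cdot 1 \in S$.

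The main obstacle — and really the only subtle point — is making the commutation move $\sigma_j^{\pm1}\eta_I = \eta_{s_j(I)}\sigma_j^{\pm1}$ rigorous, because $\eta_I$ is a specific product of $\eta_{i,k}$'s indexed by the standard edge set of $I$, and a priori conjugating that product by $\sigma_j$ yields $\prod \eta_{\{s_j(i),s_j(k)\}}$, whose index set need not be the standard edge set of $s_j(I)$. One must argue that any such product over an edge set $X$ generating the same partition equals $\eta_{\langle X\rangle}$; this is exactly Lemma \ref{<X>}, so the permuted product does collapse to $\eta_{s_j(I)}$. Once this normalization-independence is in hand, the induction on word length in the generators $\sigma_j^{\pm1},\eta_j$ runs mechanically, and the proposition follows.
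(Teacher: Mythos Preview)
Your proof is correct and follows essentially the same approach as the paper: move each tie generator to the left of the braid generators using claim (4) of Lemma \ref{lemmaEtas}, then invoke Lemma \ref{<X>} to recognise the resulting product of $\eta_{i,j}$'s as a single $\eta_I$. Your phrasing as an induction on word length (closure of $S$ under left multiplication by generators) is slightly more explicit than the paper's, and your observation that Lemma \ref{<X>} handles the normalization of the conjugated product is exactly the point the paper uses as well.
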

\begin{proof}

Every element $\nu$ in $T\!B_n$ is a word of the form   $\nu_{1}\cdots \nu_{m}$, where each $\nu_{i}$ is equal to some $\eta_{k}$ or  some $\sigma_{k}^{\pm 1}$, with $k<n$.  Now, from (4) of Lemma \ref{lemmaEtas} it follows that every $\eta_{k}$ can be moved to the beginning of  the word, resulting then that $\nu$  has  the form $\eta\beta$, where $\eta$ is a product of $\eta_{i,j}$'s and $\beta\in B_n$. After, define $X$ as the set $\{(i,j): \eta_{i,j}\text{ appears in}\, \eta\}$. Then, Lemma  \ref{<X>} implies that  $\langle X\rangle$ is the set partition such that $\eta =\eta_{\langle X\rangle}$.

\end{proof}
\begin{theorem}\label{TBnsemidirect}
The tied braid monoid  $T\!B_n$ is the monoid  $ \Psf\rtimes B_n$.
\end{theorem}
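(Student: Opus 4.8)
The plan is to exhibit a monoid isomorphism $\Psf\rtimes B_n \to T\!B_n$ and to check it has a two-sided inverse. The natural candidate for the forward map is the homomorphism $\varphi$ determined on generators by $\varphi(\widetilde{\mu}_{i,i+1})=\eta_i$ and $\varphi(\sigma_i)=\sigma_i$; this is well defined once we verify that the $\eta_i$'s and $\sigma_i$'s inside $T\!B_n$ satisfy the defining relations of $\Psf\rtimes B_n$ under the presentation given by Lemma \ref{generators}. For the reverse direction, I would use the semidirect-product structure of $\Psf\rtimes B_n$ directly: since $\Psf\rtimes B_n = \{(I,\beta)\}$ with multiplication \eqref{mult}, it suffices to send $\sigma_i\mapsto (1,\sigma_i)$ and $\eta_i\mapsto (\mu_{i,i+1},1)$ and check the relations \eqref{eta1}--\eqref{eta7} hold in $\Psf\rtimes B_n$; these are straightforward computations using \eqref{mult}, \eqref{sigmaI}, the action of $S_n$ on $\Psf$ via $\pi$, and the relations \eqref{relationsPn} for the $\mu_{i,j}$. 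Call this map $\psi:T\!B_n\to\Psf\rtimes B_n$.

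The heart of the argument is that $\varphi$ and $\psi$ are mutually inverse, and here Proposition \ref{decomposition} does the essential work. On generators, $\psi\circ\varphi$ and $\varphi\circ\psi$ are visibly the identity, so both composites are identity homomorphisms and each map is a bijection on the image of the generating set; the only subtlety is \emph{well-definedness} and \emph{injectivity} of $\psi$, i.e.\ that no extra relations hold in $T\!B_n$ that are not already consequences of those in $\Psf\rtimes B_n$. Proposition \ref{decomposition} gives a normal form $\eta_I\beta$ for every element of $T\!B_n$; I would show this form is \emph{unique}: if $\eta_I\beta = \eta_{I'}\beta'$ in $T\!B_n$, apply $\psi$ to get $(I,\pi_\beta\text{-twisted data})=(I',\ldots)$ in $\Psf\rtimes B_n$, forcing $\beta=\beta'$ in $B_n$ (using that $B_n$ embeds, which follows since $\psi$ composed with the projection $\Psf\rtimes B_n\to B_n$ agrees with the standard map $T\!B_n\to B_n$ killing the $\eta_i$) and then $I=I'$. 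Combined with surjectivity of $\varphi$ (immediate from Proposition \ref{decomposition}, since $\varphi(\widetilde{I},\beta)=\eta_I\beta$) and the generator-level computations, this yields that $\varphi$ is a bijective homomorphism, hence an isomorphism.

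The main obstacle I anticipate is \textbf{verifying that the normal form $\eta_I\beta$ is unique}, equivalently that $\psi$ is injective rather than merely surjective. The relations \eqref{eta1}--\eqref{eta7} are \emph{a priori} only known to generate a quotient of $\Psf\rtimes B_n$; ruling out a proper quotient requires producing enough structure on $T\!B_n$ to separate the elements $\eta_I\beta$. The cleanest route is the one sketched above: build $\psi$ honestly by checking \eqref{eta1}--\eqref{eta7} in $\Psf\rtimes B_n$ (a finite, mechanical but genuinely necessary check — relations \eqref{eta4}, \eqref{eta5} and especially \eqref{eta6} require care with the $S_n$-action on blocks and with \eqref{relationsPn}), and then the composite $\psi\circ\varphi = \mathrm{id}$ on generators forces $\varphi$ to be injective automatically, since $\varphi$ is already known surjective by Proposition \ref{decomposition}. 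So in fact the unique-normal-form statement need not be proved by hand — it becomes a \emph{corollary} of the isomorphism, provided the relation check for $\psi$ is carried out completely. That relation check is therefore the one step I would not skip.
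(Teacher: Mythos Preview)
Your approach is essentially the paper's: your map $\psi$ is exactly the paper's $\phi$, you use Lemma~\ref{generators} for surjectivity, and you use Proposition~\ref{decomposition} together with the computation $\psi(\eta_I)=\widetilde I$ for injectivity --- just as the paper does. One caveat worth flagging: Lemma~\ref{generators} gives only a generating set for $\Psf\rtimes B_n$, not a presentation, so you cannot define $\varphi$ as a homomorphism ``on generators'' the way you propose in your first paragraph; the paper sidesteps this entirely by never constructing an inverse map, proving injectivity of $\phi$ directly from the normal form (which you yourself sketch correctly in your second paragraph), so your two-sided-inverse framing is an unnecessary detour rather than a genuine gap.
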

\begin{proof}
The mapping $\sigma_i\mapsto \sigma_i$, $\eta_i \mapsto \widetilde{\mu}_{i,i+1}$ defines a morphism $\phi$  of monoids   from $T\!B_n$ to  $ \Psf\rtimes B_n $, since   $\phi$ respects the defining relations of $T\!B_n$; e.g.,  we shall  check relation (\ref{eta5}):
$$
\phi(\sigma_j)\phi(\sigma_i^{-1})   \phi(\eta_j) =  (1,\sigma_j)(1, \sigma_i^{-1})(\mu_{j,j+1}, 1)
= (1,\sigma_j\sigma_i^{-1})(\mu_{j,j+1}, 1)
= ( s_js_i(\mu_{j,j+1}),\sigma_j\sigma_i^{-1}).
$$
Now, for $\vert i-j\vert =1$,  $s_j s_i(\mu_{j,j+1}) = \mu_{i,i+1}$ ; then
$$
 \phi(\sigma_j)\phi(\sigma_i^{-1}) \phi(\eta_i) =   (\mu_{i,i+1},\sigma_j\sigma_i^{-1}) =
 (\mu_{i,i+1},1)(1, \sigma_j)(1,\sigma_i^{-1})= \phi(\eta_i)\phi(\sigma_j)\phi(\sigma_i^{-1}).
$$
Thus, from Lemma \ref{generators} we  get  that $\phi$ is an epimorphism. The proof of the proposition will be  completed by proving that  $\phi$ is a monomorphism, which is  done as follows.

 Let $a$ and $b$ in $T\!B_n$ such that $\phi(a)=\phi(b)$. According to Proposition \ref{decomposition}, we can write: $a= e_I\alpha$ and $b=e_J\beta$, where $I, J\in \mathsf{P}_n$ and $\alpha, \beta\in B_n$. Then $\phi(a)=\phi(b)$ is equivalent to
$\phi(e_I)(1, \alpha) = \phi(e_J)(1,\beta)$; now, since $\phi(e_I)$ and $\phi(e_J)$ are words in the $\widetilde{\mu}_{i,j}$'s, it follows that $\alpha =\beta$; thus, it remains only to prove $e_I=e_J$.
 To do this,  note  that $\phi(\eta_{i,j}) = \widetilde{\mu}_{i,j}$; then, we deduce that for any subset  $M$   of $\bf n$:  $\phi(\eta_M) = \widetilde{M}$. Hence $\phi(e_I) = \widetilde{I}$, for all $I\in \mathsf{P}_n$. Therefore,
 $\phi(e_I)(1, \alpha)= \phi(e_J)(1,\alpha)$,   so  that   $ a = b $.
\end{proof}

\begin{remark}\rm \label{TBinf}
The natural inclusions $B_n\subseteq B_{n+1}$ together with the inclusions $\mathsf{P}_n\subseteq \mathsf{P}_{n+1}$ (see  Definition \ref{Pinf}) induce  the tower of monoids $T\!B_1\subseteq T\!B_{2}\subseteq \cdots \subseteq T\!B_{n}\cdots $. We will denote by  $TB_\infty$ the inductive limit associated to this tower. Notice that $\mathsf{P}_{\infty}$ and $B_{\infty}$ can be regarded as submonoid of $TB_\infty$.
\end{remark}

\subsection{Diagrams}   As for the braid group, we can use diagrams to represent the the elements of the tied braid monoid. This diagrammatic  representation is used later in the paper and works under the  conventions listed below.

\begin{enumerate}
\item The  multiplication in $B_n$ is done  by  concatenation, more  precisely, the product  $\beta_1 \beta_2$ is done  by putting  the  braid $\beta_1$ over the braid  $\beta_2$,  so  that   a  word in  the  generators  has to  be read  from top to  bottom.
\item The  tied braid $(I,\alpha)$ is  represented as  the  braid  $\alpha \in B_n$ with the partition $I$ of the strands at  the  top of  $\alpha$,  see Figure \ref{Fig02}.
\item The permutation $\pi_{\beta}$,   defined  by  the  braid $\beta$,  acts  on the set of  $n$ strands at  the  bottom of $\beta$.
\end{enumerate}

\begin{center}
 \begin{figure}[H]
 \includegraphics{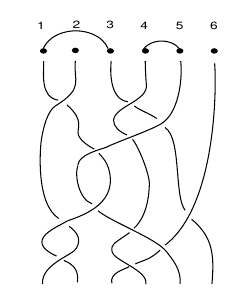}
 \caption{ Diagrammatic  representation of  an element of  $\Psf \rtimes B_n$.}\label{Fig02}
 \end{figure}
 \end{center}

\section{Tied links  and combinatoric  tied  links }

We start this section recalling  briefly the tied links, later we introduce their combinatoric version, called   combinatoric tied links. Then we reprove the Alexander and Markov theorems    for them.

\subsection{Tied links}
 Tied links were introduced in \cite{aijuJKTR1} and roughly correspond   to  links  with  ties connecting pairs of  points  of  two  components or of the  same  component.
The   ties   in the picture of the tied links  are   drawn  as  springs,  to outline (diagrammatically)  the fact  that they  can be contracted  and  extended,  letting  their  extremes  to  slide  along the  components.

  We will use the notation $C_i \leftrightsquigarrow C_j$  to indicate that either there is a tie between the components $C_i$ and $C_j$ of a link, or
  $C_i$ and $C_j$ are  the     extremes of  a  chain  of $m>2$ components $C_1, \dots , C_m$, such that there is  a  tie  between $C_i$  and  $C_{i+1}$,  for  $i=1,\dots, m-1$.

  \begin{definition}\cite[Definition 1.1]{aijuJKTR1}\label{TL} \rm   Every 1--link is by definition a tied 1--link.   For $k > 1$, a  tied $k$--link  is a   link  whose  set of $k$ components is partitioned  into parts according to: two components $C_i$ and $C_j$ belong to  the  same   part  if
  $C_i \leftrightsquigarrow C_{i+1}$.
\end{definition}
 Therefore, a tied  $k$--link $L$,   with  components' set  $C_L=\{C_1, \ldots , C_k\}$, determines a pair $(L, I(C_L))$ in $\mathcal{L}_k \times \Psk$, where $i$ and $j$ belong to the same block of $I(C_L)\in \Psk $  if   $C_i \leftrightsquigarrow C_j$.  In   Figure \ref{Fig1}  two  tied links with four components  are shown with the  corresponding partitions.

A tie of a tied link   is said {\sl essential}  if cannot  be  removed without modifying  the  partition $I(C_L)$, otherwise  the  tie is  said  {\sl  unessential}, cf. \cite[Definition 1.6]{aijuJKTR1}.  Observe    that  between  the $c$ components indexed by   the  same block  of  the set  partition,  the  number of       essential ties    is  $c-1$; for instance, in the tied link of  Fig. \ref{Fig1}, left, among the  three  ties  connecting  the first three components, only two  are  essential. The  number of  unessential ties  is  arbitrary.   Ties  connecting     one  component  with itself are  unessential.

%\begin{definition}\cite[Definition 1.1]{aijuJKTR1}\label{TL} \rm A tied $k$--link  is a   link  whose  set of $k$ components is partitioned  according to: two components  belong to  the  %same  part  if    between  these  components  there is a  tie.
%\end{definition}

%Thus, a tied  $k$--link $L$,   with  components' set   $C_L=\{C_1, \ldots , C_k\}$, determines a pair $(L, I(C_L))$ in $\mathcal{L}_k \times \Psk$, where $i$ and $j$ belong to the same block %of $I(C_L)\in \Psk $  if between  the components $C_i$ and $C_j$ there exits a ties.  Reversely, given $(L, I)$
% in $\mathcal{L}_k \times \Psk$ we can put  ties connecting pairs of   components
% in such a way that $I(C_L) = I$.
\begin{center}
 \begin{figure}[H]
 \includegraphics{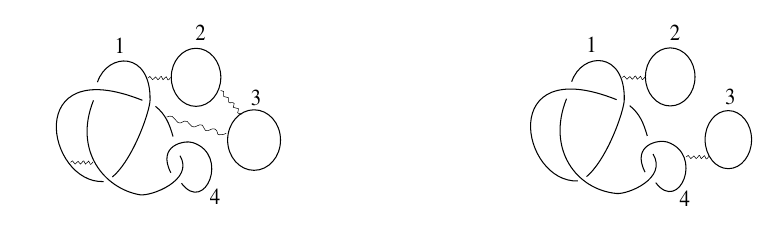}
 \caption{ Left: $I_1=\{\{1,2,3\},\{4\}\}$  and right:  $I_2=\{\{1,2\},\{3,4\}\}$.  }\label{Fig1}
 \end{figure}
 \end{center}

\subsection{Combinatoric tied links }
A combinatoric  tied  link  is   a link provided  with a partition of its set of  components.
We  will  depict  a  combinatoric  tied  link  as  a  link  with  numbered  components  and  the  scheme of  a   partition (see  Figure \ref{Fig1a}).  We  define now the concept of {\it t--isotopy} of combinatoric tied links    which reflects   the t--isotopy of tied links.

 Let $\mathcal{L}$ be the set  formed by  the links in $\mathbb{R}^3$. We shall denote $\mathcal{L}_k$ the set of links with  $k$ components. Hence,   $\mathcal{L}= \coprod_{k\in \mathbb{N}}\mathcal{L}_k$.

Observe  that the numbering of  the components of a link is arbitrary.  Now, an isotopy  between two  links $L$ and  $L'$ in   $\mathcal{L}_k$,  defines a  bijection from  the  set of components of  the first  to  the  set of  components of  the  second; we denote   such bijection by $w_{L,L'}$.

\begin{definition}\label{comtiedlinks}
  An element  of $ \mathcal{L}_k^t :=\mathcal{L}_k \times \Psk$ is called  $k$--tied combinatoric link; then,  combinatoric tied links  are the elements of $\mathcal{L}^t$, where
$$
\mathcal{L}^t :=\coprod_{k\in \mathbb{N}}\mathcal{L}_k^t.
$$
  In  what follows,   we    denote   by  $(L,I(C_L))$  the combinatoric   tied  link in which the link $L$ has components set
  $C_L$  with   set  partition  $I(C_L)$.
\end{definition}

  Note  that  a  classical  link $L \in \mathcal{L}_k$ with  components  set $C_L$ can be  considered as  a combinatoric   tied  link   $(L,  {\bold 1}_k )$.

  \begin{definition}  Two  partitions    $I(C_L)$ and  $I(C_{L^{\prime}})$  of  two isotopic  links $L$ and  $L'$  are  said iso--conjugate whenever $I(C_L)\sim_{ w_{L, L'}} I(C_{L^{\prime}})$.
\end{definition}

  \begin{definition}
  We will say that two tied links $(L, I(C_L))$ and $(L^{\prime}, I(C_{L^{\prime}}))$ are t--isotopic  if $L$ and $L^{\prime}$ are ambient isotopic and $I(C_L)$ and $I(C_{L^{\prime}})$   are  iso--conjugate.
\end{definition}

\begin{proposition} The   t--isotopy   relation, denoted by $\sim_t$, is an equivalence relation on $\mathcal{L}^t$. \end{proposition}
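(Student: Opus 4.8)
The plan is to verify the three defining properties of an equivalence relation — reflexivity, symmetry, and transitivity — on $\mathcal{L}^t$, reducing each to the corresponding property of ambient isotopy on $\mathcal{L}$ together with the conjugacy relation $\sim$ on set partitions. First I would establish reflexivity: given $(L,I(C_L))$, the identity isotopy $L \to L$ induces the identity bijection $w_{L,L} = \mathrm{id}$ on components, and since $\mathrm{id}(I(C_L)) = I(C_L)$ we have $I(C_L) \sim_{w_{L,L}} I(C_L)$, hence $(L,I(C_L)) \sim_t (L,I(C_L))$.

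Next I would treat symmetry. Suppose $(L,I(C_L)) \sim_t (L',I(C_{L'}))$, so $L$ and $L'$ are ambient isotopic and $I(C_L) \sim_{w_{L,L'}} I(C_{L'})$, i.e.\ $w_{L,L'}(I(C_L)) = I(C_{L'})$. The reverse isotopy $L' \to L$ induces the inverse bijection, and one checks $w_{L',L} = (w_{L,L'})^{-1}$ as maps on component-index sets. Applying $w_{L',L}$ to $I(C_{L'})$ recovers $I(C_L)$, so $I(C_{L'}) \sim_{w_{L',L}} I(C_L)$ and thus $(L',I(C_{L'})) \sim_t (L,I(C_L))$.

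For transitivity, suppose $(L,I(C_L)) \sim_t (L',I(C_{L'}))$ and $(L',I(C_{L'})) \sim_t (L'',I(C_{L''}))$. Ambient isotopy is transitive, so $L$ and $L''$ are ambient isotopic; the composite isotopy induces the composite bijection $w_{L,L''} = w_{L',L''} \circ w_{L,L'}$. Then $w_{L,L''}(I(C_L)) = w_{L',L''}(w_{L,L'}(I(C_L))) = w_{L',L''}(I(C_{L'})) = I(C_{L''})$, giving $I(C_L) \sim_{w_{L,L''}} I(C_{L''})$ and hence $(L,I(C_L)) \sim_t (L'',I(C_{L''}))$.

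The only genuinely delicate point — and the step I expect to be the main obstacle — is justifying that the induced bijections on components behave functorially, i.e.\ that $w_{L',L} = (w_{L,L'})^{-1}$ and $w_{L,L''} = w_{L',L''}\circ w_{L,L'}$. Strictly speaking $w_{L,L'}$ is only well defined up to the choice of an isotopy, so one should either fix a convention (any isotopy induces the \emph{same} component bijection, since components are connected and isotopies move them continuously) or phrase the argument so that it holds for the bijection arising from the chosen composite isotopy. I would spell this out briefly, noting that the permutation of components induced by an ambient isotopy depends only on the isotopy's endpoints in the relevant sense, so the composition and inversion identities above hold and the three properties follow. Everything else is a routine transcription of reflexivity, symmetry, and transitivity of $\sim$ on $\Psk$.
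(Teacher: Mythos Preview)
Your argument is correct, and in fact the paper states this proposition without proof, treating it as evident; your verification of reflexivity, symmetry, and transitivity via the identity, reverse, and composite isotopies is exactly the routine check one would supply.

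One small correction to your final paragraph: the first alternative you float --- that any isotopy from $L$ to $L'$ induces the \emph{same} bijection on components --- is false in general (e.g.\ for a split link with two isometric components there are isotopies that swap them and isotopies that do not). The notation $w_{L,L'}$ in the paper tacitly depends on a choice of isotopy, and the definition of iso--conjugacy should be read as ``there exists an isotopy whose induced bijection carries $I(C_L)$ to $I(C_{L'})$''. Your second alternative --- arguing with the bijection coming from the specific isotopy you construct (identity, reverse, composite) --- is the right one and already makes the proof go through, so no further work is needed.
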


In the sequel  we do not distinguish formally between a tied link  and its class of t--isotopy.

The equivalence between the concepts of  tied links and combinatoric tied links is clear, e.g.  compare Figures  \ref{Fig1}  and \ref{Fig1a}.

\begin{example}\label{ex1} \rm See  Figure    \ref{Fig1a}.

\begin{center}
 \begin{figure}[H]
 \includegraphics{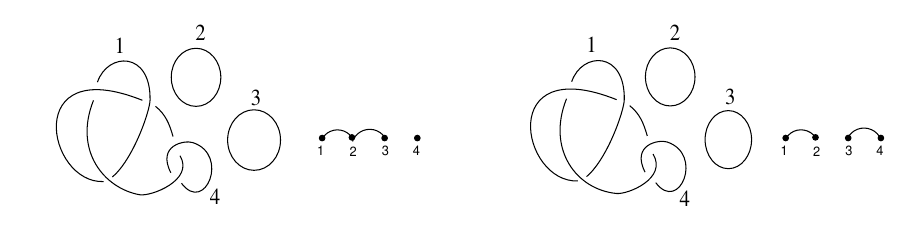}
 \caption{The combinatoric  tied links $(L,I_1 )$  and  $(L,I_2)$ corresponding to the  tied links of  Figure \ref{Fig1}. }\label{Fig1a}
 \end{figure}
 \end{center}

\end{example}

 \subsection{  Alexander and Markov  theorems for  combinatoric  tied  links}

The    algebraic  counterpart of   tied links  is the tied braid monoid $T\!B_\infty$ introduced in \cite{aijuJKTR1}    More precisely, in this paper     we  have proven the   Alexander  and Markov theorems   for  tied  links.
  Below we  reprove these theorems  but   regarding   the tied braid monoid   $T\!B_n$  as \lq the semi--direct  product\rq\  $\Psf \rtimes B_n$  and the tied links as combinatoric tied links.

 \begin{definition} \label{closure} The closure of the tied braid $(I, \alpha)$, denoted by $\widehat{(I, \alpha)}$, is   the combinatoric tied link $(L,J)$,  where $L=\widehat{\alpha}$ is the usual closure of the braid $\alpha$, done, as usual, by identifying
the bottom with the top of the strands of $\alpha$,  whereas  the  partition $J$  is defined by  the  partition  $I$ and the permutation $\pi_\alpha$,  as  explained  below.

\begin{center}
 \begin{figure}[H]
 \includegraphics{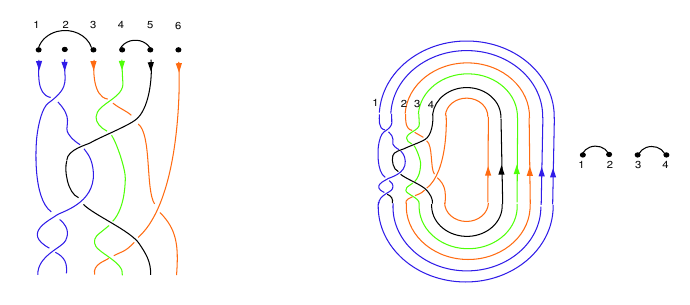}
 \caption{The  closure of  the  tied  braid $(I,\alpha)$ at left is  the  combinatoric tied link  $(L,J)$ at  right,  where the four components are distinguished by  different colors.  }\label{Fig10}
 \end{figure}
 \end{center}
More precisely, if $k$ denotes  the number of components of the link
 $\widehat{\alpha}$, or equivalently  the  number of cycles of  the permutation $\pi_\alpha$, then
$J$  is the set partition of $\bf k$ whose blocks are  determined by those arcs of $I$ connecting strands belonging to different cycles of $\pi_\alpha$.
  For  instance, in Figure \ref{Fig10} the  arc (1,3) of $I$ connecting   the blue    and the red  components,  determines  the arc   (1,2) of $J$.
\end{definition}

 The extension of the  Alexander and Markov  theorems to   combinatoric   tied links, i.e. the characterization of the class of tied braids whose closures give the same   combinatoric tied link,
must take into  account the  behavior of  the partition $I$  under  closure of  the  tied braid  $(I,\alpha)$.
For  this  reason,  before  of stating    the   Alexander and Markov theorems for combinatoric tied links,  we need to introduce   the  tools   below.

\begin{definition}
Let  $I$, $K\in \Psf$  such that  $ K\preceq I $, let  $m\le n$  be  the  number of  blocks of  $K$ and  ${\bf m}=\{1, \ldots , m\}$. We denote by    $I/K$  the set partition of   ${\bf m}$,
 whose      blocks    are the sets
 $$
   (I/K)_i:= \{j \in {\bf m} \, : \, K_j \subseteq I_i  \} ,
 $$
  where the blocks $K_j$'s and $I_i$'s are taken standard indexed.
\end{definition}

\begin{example}  Let  $I=\{\{1,2,5\},\{3,4\}\}$,  $K=\{\{1,2\},\{3,4\},\{5\}\}$.  Then  $m=3$, $K_1=\{1,2\}, \  K_2= \{3,4\},\ K_3=\{5\}$   and $I/K=\{\{1,3\},\{2\}\}$.
\end{example}

\begin{proposition}
For $I\in \Psf$ with $k$ blocks, we have:
\begin{enumerate}
\item $I/I ={\bf 1}_k$,
\item $I/{\bf 1}_n = I$.
\end{enumerate}
\end{proposition}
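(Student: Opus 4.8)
The plan is to unwind the definition of the quotient operation $I/K$ in the two special cases at hand and then invoke the single elementary fact that distinct blocks of a set partition are nonempty and pairwise disjoint, so none can be contained in another.

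For part (1), I would set $K = I$, so that the number of blocks of $K$ is $m = k$ and, after standard indexing, the blocks of $K$ are exactly $I_1, \ldots , I_k$. By definition $(I/I)_i = \{\, j \in {\bf k} \, : \, I_j \subseteq I_i \,\}$. If $I_j \subseteq I_i$ then $I_j \cap I_i \neq \emptyset$ (as $I_j$ is nonempty), hence $j = i$ because the blocks of $I$ are pairwise disjoint; conversely $I_i \subseteq I_i$ trivially. Therefore $(I/I)_i = \{i\}$ for every $i$, which is precisely the partition ${\bf 1}_k$.

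For part (2), I would set $K = {\bf 1}_n$. One first remarks that ${\bf 1}_n \preceq I$, since each block of $I$ is a union of singletons, so the quotient $I/{\bf 1}_n$ is defined and $m = n$. Because $\min\{j\} = j$, the standard indexation of the blocks of ${\bf 1}_n$ is $K_j = \{j\}$, so $(I/{\bf 1}_n)_i = \{\, j \in {\bf n} \, : \, \{j\} \subseteq I_i \,\} = \{\, j : j \in I_i \,\} = I_i$, and the standard indexation of the $I_i$ carries over unchanged. Hence $I/{\bf 1}_n = \{I_1, \ldots , I_k\} = I$ as a set partition of ${\bf n}$.

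There is essentially no obstacle here; the only point that merits a word of care is that the equalities are asserted for \emph{standard indexed} partitions, so one should note that in both cases the indexing on the two sides matches by construction (in (1) via $j \mapsto \{j\}$, in (2) via the identity). I would present the whole argument in just a couple of lines.
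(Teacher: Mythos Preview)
Your argument is correct and is exactly the straightforward unwinding of the definition that the paper has in mind; in fact the paper states this proposition without proof, treating both items as immediate from the definition of $I/K$. Your care about standard indexation is appropriate but, as you note, automatic here.
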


\begin{definition}
Let  $K\in \Psf$  with $m$ blocks   standard indexed  $K_1,\ldots, K_m$  and  $J\in \PP_m$ with  $l$ blocks $J_1,\dots   , J_l$  standard indexed, too.
We   denote by $ K  \times J $ the  set partition in $\PP_n$    with   $l$ blocks  $(K  \times J)_i$'s  given by
$$
  (K  \times J)_i =\cup_{j\in J_i} K_j.
$$
Notice that  $K\preceq   K\times J $.
 \end{definition}

\begin{example}%\label{ex2}
 Let    $K=\{\{1,2\},\{3,4\},\{5\}\}$,   $m=3$,  and  $J:= \{\{1,3\},\{2\}\}$.  Then  $K \times J=\{\{1,2,5\}, \{3,4\}\}$.
\end{example}

\begin{notation}\label{Kalpha} \rm
Given    a  braid  $\alpha\in B_n$, we  denote by  $K_\alpha\in\Psf$  the set partition    whose  blocks are    the cycles of  the  permutation $\pi_\alpha$,   including  the 1--cycles. \end{notation}

\begin{remark}\label{cycles} \rm
  Recall that the closure  of  a  classical  braid $\alpha$ is  a link whose  components  are in one--to--one  correspondence  with  the  cycles of  the permutation  $\pi_\alpha$.  The {\it standard indexation} of the components  of $\widehat{\alpha}$ is   that obtained  from the standard  indexation of the cycles of $\pi_{\alpha}$.
 \end{remark}

\begin{example} Consider the  braid  $\alpha$ in Figure \ref{Fig10}, left.  We  have $\pi_\alpha= (1,2)(3,6)$, so  $K_\alpha=\{\{1,2\},\{3,6\},\{4\},\{5\}\}$. The  four  blocks correspond to  the  components of the  link at  right.
\end{example}
In order  to  distinguish  a  set  partition  $I\in\Psf$, associated  to  a  tied braid  $(I,\alpha)$,
from  a   set partition  $J\in \Psk$,  associated  to  a  tied  link  $(L,J)$,  we  shall
call  this  last partition  sc--partition (from  set of components).

  For   $(I,\alpha)\in T\! B_n$    we define
 \begin{equation}\label{barI}
  \overline{ I}_\alpha  := I \ast K_\alpha .
  \end{equation}

\begin{proposition}\label{scpartition}
 If the    $k$--tied link $(L,J)$ is  the  closure  of  the tied  braid $(I,\alpha)$, then   the sc--partition  $J$  is given by
\begin{equation}\label{J}
   J =  \overline{ I}_\alpha  / K_\alpha .
   \end{equation}
\end{proposition}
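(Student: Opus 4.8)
The plan is to unwind both sides of \eqref{J} in terms of the standard representation by arcs and to check that the equivalence relation they encode on the set of components of $\widehat{\alpha}$ is the same. Recall from Definition \ref{closure} that the $k$ components of $L=\widehat{\alpha}$ correspond bijectively to the cycles of $\pi_\alpha$, i.e.\ to the blocks of $K_\alpha$, standard indexed as in Remark \ref{cycles}; and that $J$ is the partition of $\mathbf{k}$ in which two components are in the same block precisely when they are connected by a \emph{chain} of arcs of $I$, each arc joining two strands that lie in distinct cycles of $\pi_\alpha$ (an arc of $I$ joining two strands of the \emph{same} cycle contributes nothing to $J$, since it connects a component to itself). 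So the first step is to reformulate this: define a relation on the blocks of $K_\alpha$ by putting $K_r \approx K_s$ whenever some block of $I$ meets both $K_r$ and $K_s$; then $J$ is the partition of $\mathbf{k}$ generated by $\approx$.

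Next I would compute the right-hand side. By definition $\overline{I}_\alpha = I \ast K_\alpha$ is the join of $I$ and $K_\alpha$ in the poset $(\Psf,\preceq)$, so $K_\alpha \preceq \overline{I}_\alpha$ and the quotient $\overline{I}_\alpha / K_\alpha$ is defined: its blocks are indexed by $\mathbf{m}$ with $m$ the number of blocks of $K_\alpha$ (which is exactly $k$), and $(\overline{I}_\alpha/K_\alpha)_i = \{\, j \in \mathbf{k} : (K_\alpha)_j \subseteq (\overline{I}_\alpha)_i \,\}$, all blocks standard indexed. The key point is to identify the blocks of the join $I \ast K_\alpha$: a standard fact about the join in the partition lattice is that two elements of $\mathbf{n}$ lie in the same block of $I \ast K_\alpha$ iff they are connected by a chain of elements in which consecutive terms lie together either in a block of $I$ or in a block of $K_\alpha$. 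Restricting this description to whole $K_\alpha$-blocks (which is legitimate since $K_\alpha \preceq I\ast K_\alpha$, so each block of $I\ast K_\alpha$ is a union of $K_\alpha$-blocks), one sees that $(K_\alpha)_r$ and $(K_\alpha)_s$ lie in the same block of $I\ast K_\alpha$ iff they are connected by a chain of $K_\alpha$-blocks, consecutive ones being joined by a common block of $I$ — i.e.\ iff $K_r$ and $K_s$ are equivalent under the relation generated by $\approx$ above. Hence the blocks of $\overline{I}_\alpha/K_\alpha$ are exactly the equivalence classes of $\approx$, which is precisely the description of $J$ obtained in the first step.

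Finally I would check the bookkeeping of indices: both $J$ and $\overline{I}_\alpha/K_\alpha$ are partitions of $\mathbf{k}$ where $j\in\mathbf{k}$ names the $j$-th component of $\widehat\alpha$ in its standard indexation, equivalently the $j$-th block $(K_\alpha)_j$ in standard indexation; the quotient construction in Definition of $I/K$ and the standard indexation of components in Remark \ref{cycles} use the same convention, so the identification is label-compatible and $J = \overline{I}_\alpha/K_\alpha$ as elements of $\Psk$. The only subtle point — and the step I expect to be the main obstacle — is the description of the join $I\ast K_\alpha$ ``block by block'' and the verification that an arc of $I$ lying inside a single cycle of $\pi_\alpha$ genuinely contributes nothing new, neither to $J$ nor to $\overline{I}_\alpha/K_\alpha$; once that matching of the two chain-descriptions is made precise, using \eqref{partial} to note that such an internal arc is already absorbed by $K_\alpha$ in the join, the equality is immediate. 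A small example (e.g.\ the braid of Figure \ref{Fig10}, where $K_\alpha=\{\{1,2\},\{3,6\},\{4\},\{5\}\}$ and the arc $(1,3)$ of $I$ yields the arc $(1,2)$ of $J$) can be inserted as a sanity check.
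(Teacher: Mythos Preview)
Your proposal is correct and follows essentially the same route as the paper: both arguments identify the blocks of $\overline{I}_\alpha/K_\alpha$ as the classes of $K_\alpha$-blocks that get merged by the arcs of $I$ (equivalently, by blocks of $I$ meeting two different cycles), and then match this with the definition of $J$ from Definition~\ref{closure}. Your write-up is in fact more explicit than the paper's on two points---the chain description of the join $I\ast K_\alpha$ in the partition lattice and the index compatibility between the standard indexation of components and the standard indexation used in the quotient---whereas the paper treats these informally via the arc picture.
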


\begin{proof}
The  number of   blocks  of $K_\alpha$, coincides with the  number of components of $L$, i.e., $k$.
  If   $\overline I$  has   $m$ blocks, we have   $m\le k$; moreover,  since $\overline I \preceq  K_\alpha$,  every block of $K_\alpha$ is contained  in a  block  of $\overline I$.
Therefore, $\overline{ I}_\alpha/K_{\alpha}$ is a set partition of  $\bf k$  having  $m$ blocks.   Now,
by definition, the block $i$ of  this set partition is
$$
 (\overline{ I}_\alpha/K_{\alpha})_i=
\{j\in {\bf k}\, ;\, (K_{\alpha})_j\subseteq {\overline I}_i\}.
$$
In other words, the  elements of the  set $(\overline{ I}_\alpha/K_{\alpha})_i$ are  the different blocks  of $K_\alpha$,
contained in the    block $(\overline{I}_\alpha)_i $.  Therefore,  an  arc of the   set partition  $\overline I$, connecting two elements of $\bf n$ belonging  to  a  same  block of $K_\alpha$,   does not determine an arc   in $\overline{ I}_\alpha/K_{\alpha}$. On the other hand,   any arc  of $\overline I$  connecting elements belonging to two  different blocks of $K_\alpha$,   determines  an arc   of    $\overline{ I}_\alpha/K_{\alpha}$.  Therefore  we conclude that  $\overline{ I}_\alpha/K_{\alpha}=J$.
\end{proof}

 \begin{example}%\label{ex2}
 \rm
 Fig. \ref{Fig2} shows at left  a  tied braid $(I,\alpha)$,  where  $I=\{\{1,3\},\{2\},\{4,5\},\{6\}\}$;    in the middle  the  tied braid $ (K_\alpha,\alpha) $, where  $K_\alpha= \{K_1=\{1,2\},K_2=\{3,6\},K_3=\{4\},K_4=\{5\}\}$,   so  that  $k=4$;  at  right, the  tied braid $(\overline I,\alpha)$, where  $\overline I=\{\{1,2,3,6\},\{4,5\}\}$. Observe that   $\overline I$ is made by 2 blocks,  the  first   containing the blocks $K_1, K_2$ and  the  second   containing the blocks  $K_3$ and $K_4$ of  $K_\alpha$.
We thus have that the sc--partition $J\in  \mathsf{P}_4$ is given by $\{\{1,2\},\{3, 4\}\}$. Observe  that  the closure of  $(I,\alpha)$ is  the combinatoric tied  link  $(L, I_2)$ shown in Fig. \ref{Fig1a}, right.

\begin{center}
 \begin{figure}[H]
 \includegraphics{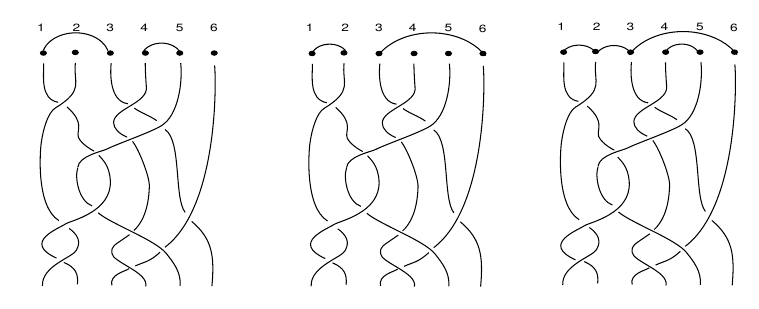}
 \caption{ }\label{Fig2}
 \end{figure}
 \end{center}

\end{example}

  We are ready now to prove the Alexander and Markov theorems in the context of combinatoric tied links.

\begin{theorem}
[Cf. {\cite[Theorem 3.5]{aijuJKTR1}}]\label{Alexander}
Every combinatoric  tied  link  can  be  obtained  as  closure of     a tied braid.  More precisely, if the link $L$ is  the  closure of the braid  $\alpha$,  then  the  combinatoric  tied link $(L,J)$,   up to a renumbering of  the  components,  is  the  closure  of the  tied braid  $(I,\alpha)$,
where
\begin{equation}\label{IfromKJ}
I := K_\alpha   \times   J .
\end{equation}
\end{theorem}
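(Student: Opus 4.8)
The statement has two parts: (i) every combinatoric tied link $(L,J)$ arises as a closure of a tied braid, and (ii) the explicit form of such a tied braid is $(I,\alpha)$ with $I=K_\alpha\times J$. The plan is to reduce everything to the classical Alexander theorem and to Proposition \ref{scpartition}, which already computes the sc--partition of a closure. First I would invoke the classical Alexander theorem: since $L$ is a classical link, there is a braid $\alpha\in B_n$ with $\widehat\alpha=L$. Fix such an $\alpha$; by Remark \ref{cycles} the components of $L$ are in bijection with the cycles of $\pi_\alpha$, hence with the blocks of $K_\alpha$, and under the standard indexation this bijection identifies ${\bf k}$ with the index set of the blocks of $K_\alpha$. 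Thus the sc--partition $J\in\PP_k$ can equally be read as a partition of the blocks of $K_\alpha$.

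Next I would set $I:=K_\alpha\times J$ as prescribed and verify that the closure of the tied braid $(I,\alpha)$ is exactly $(L,J)$ up to renumbering. By definition of $\times$ we have $K_\alpha\preceq I$, so $I\ast K_\alpha=I$ by \eqref{partial}; hence $\overline I_\alpha=I\ast K_\alpha=I=K_\alpha\times J$. Now apply Proposition \ref{scpartition}: the sc--partition of the closure of $(I,\alpha)$ is $\overline I_\alpha/K_\alpha=(K_\alpha\times J)/K_\alpha$. It then remains to check the purely combinatorial identity $(K_\alpha\times J)/K_\alpha=J$; more generally, for any $K\in\Psf$ with $m$ standard-indexed blocks and any $J\in\PP_m$ one has $(K\times J)/K=J$. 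This follows directly from unwinding the two definitions: the block $(K\times J)_i$ is the union $\bigcup_{j\in J_i}K_j$, so the block $j$ of $K$ is contained in $(K\times J)_i$ precisely when $j\in J_i$; hence $((K\times J)/K)_i=\{j: K_j\subseteq (K\times J)_i\}=J_i$. Care is needed only to confirm that the standard indexations match up, i.e. that $\min((K\times J)_i)$ increases with $i$ when the $J_i$ are standard indexed in $\PP_m$ and the $K_j$ standard indexed in $\Psf$; this is immediate because $\min((K\times J)_i)=\min_{j\in J_i}\min(K_j)=\min(K_{\min J_i})$ and $i\mapsto\min J_i$ is increasing.

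Finally, I would note the phrase "up to a renumbering of the components": the braid $\alpha$ produced by Alexander's theorem realizes $L$ only after identifying the components of $\widehat\alpha$ with those of $L$ via the isotopy, and this identification is exactly the permutation $w_{L,\widehat\alpha}$ entering the notion of iso-conjugacy; since $J$ and the computed sc--partition $(K_\alpha\times J)/K_\alpha=J$ agree as partitions of the (re)indexed component set, the two combinatoric tied links are t-isotopic, which is the assertion. I do not expect a serious obstacle here; the only delicate point is bookkeeping of the standard indexations of blocks under the operations $\times$ and $/$, and making sure the renumbering induced by the Alexander move is accounted for by the iso-conjugacy in the definition of t-isotopy. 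The substantive content has already been isolated in Proposition \ref{scpartition}, so this theorem is essentially its converse together with the classical Alexander theorem.
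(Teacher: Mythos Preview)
Your proposal is correct and follows the same approach as the paper: apply the classical Alexander theorem to obtain $\alpha$ with $\widehat\alpha=L$, then set $I=K_\alpha\times J$ (after aligning the numbering of components with the standard indexation of the blocks of $K_\alpha$). The paper's proof is in fact more terse than yours: it simply states this construction and handles the renumbering, without explicitly verifying that the closure of $(I,\alpha)$ recovers $(L,J)$. Your additional step of invoking Proposition~\ref{scpartition} and proving the combinatorial identity $(K\times J)/K=J$ (including the check that standard indexations are preserved) fills in exactly what the paper leaves implicit, so your argument is a strict elaboration of theirs rather than a different route.
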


\begin{proof}  Let  $(L,J)$ be a  combinatoric  tied  link. Applying   the Alexander  theorem  to  the   link $L$  we  get  a  braid  $\alpha$ whose closure  is $L$.   The standard indexed set partition $K_\alpha$ (see Remark  \ref{cycles}) defines an ordering of the $k$ components of  the closure of $\alpha$.
On  the other hand,  the set partition $J$  is  defined on the set of components ordered arbitrarily. By numbering the components  of $L$,  according to  the standard ordering of the blocks of  $K_\alpha$, we obtain from $J$ the  partition $\tilde J$.  Then  the set partition $I$ of  the  tied  braid $(I,\alpha)$ is  obtained  as    $K_\alpha  \times  \tilde J$.
\end{proof}

\begin{lemma}\label{lemmaM3}
Let  $(I,\alpha)\in T\! B_n$.
We have
\begin{equation}\label{forMarkovStab}
\overline{ I}_{\alpha} /K_{\alpha}= \overline{ I}_{\alpha\sigma_n^{\pm 1}}/K_{\alpha\sigma_n^{\pm 1}}.
 \end{equation}

\end{lemma}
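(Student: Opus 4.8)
The plan is to track what happens to the two ingredients $\overline{I}_{\alpha}=I\ast K_{\alpha}$ and $K_{\alpha}$ (see (\ref{barI}) and Notation \ref{Kalpha}) when $\alpha$ is replaced by the stabilized braid. Since $\pi_{\sigma_n}=\pi_{\sigma_n^{-1}}=s_n$, in both cases $\pi_{\alpha\sigma_n^{\pm1}}=\pi_{\alpha}s_n$ in $S_{n+1}$, so it suffices to argue once; set $\beta:=\alpha\sigma_n^{\pm1}\in B_{n+1}$, let $j_0$ be the standard index of the block of $K_\alpha$ containing $n$, and let $i_0$ be the standard index of the block of $\overline{I}_\alpha$ containing $n$. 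Here $I\in\Psf$ is regarded as $\iota_n(I)\in\mathsf{P}_{n+1}$ (Definition \ref{Pinf}).

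First I would describe $K_\beta$. Viewing $\pi_\alpha$ inside $S_{n+1}$ (fixing $n+1$), its cycle decomposition is that of $\pi_\alpha$ together with the $1$--cycle $(n+1)$; right--multiplication by the transposition $(n,n+1)$ merges the cycle through $n$ with the fixed point $n+1$ and leaves the other cycles untouched. Hence, as a set partition of $\{1,\dots,n+1\}$, $K_\beta$ is obtained from $K_\alpha$ by adjoining $n+1$ to its block containing $n$. Since $n+1$ exceeds every element of ${\bf n}$, no block minimum changes, so $K_\beta$ still has $k$ blocks and, with standard indexing, $(K_\beta)_j=(K_\alpha)_j$ for $j\neq j_0$ while $(K_\beta)_{j_0}=(K_\alpha)_{j_0}\cup\{n+1\}$. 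Similarly, for $\overline{I}_\beta=\iota_n(I)\ast K_\beta$: in $\iota_n(I)$ the point $n+1$ is a singleton block and in $K_\beta$ it lies only in the block of $n$, so in the join the class of $n+1$ is glued exactly to the class of $n$ and nothing else on ${\bf n}$ changes; thus $\overline{I}_\beta$ restricted to ${\bf n}$ equals $I\ast K_\alpha=\overline{I}_\alpha$, and $n+1$ sits in the block of $n$. Again no block minimum moves, so $\overline{I}_\beta$ has the same number $m$ of blocks as $\overline{I}_\alpha$, with $(\overline{I}_\beta)_i=(\overline{I}_\alpha)_i$ for $i\neq i_0$ and $(\overline{I}_\beta)_{i_0}=(\overline{I}_\alpha)_{i_0}\cup\{n+1\}$. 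Note $K_\alpha\preceq\overline{I}_\alpha$ and $K_\beta\preceq\overline{I}_\beta$ by definition of $\ast$, so both quotients in (\ref{forMarkovStab}) are defined and are set partitions of ${\bf k}$.

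It then remains to compare the two quotients block by block. Fix $i\in{\bf k}$; I claim $(K_\alpha)_j\subseteq(\overline{I}_\alpha)_i\iff(K_\beta)_j\subseteq(\overline{I}_\beta)_i$ for every $j$, which yields $(\overline{I}_\alpha/K_\alpha)_i=(\overline{I}_\beta/K_\beta)_i$ and hence (\ref{forMarkovStab}). Observe first that $(K_\alpha)_{j_0}\subseteq(\overline{I}_\alpha)_{i_0}$, since both contain $n$ and $K_\alpha\preceq\overline{I}_\alpha$. Now use that $n+1\in(K_\beta)_j$ exactly when $j=j_0$ and $n+1\in(\overline{I}_\beta)_i$ exactly when $i=i_0$: if $j\neq j_0$ the inclusion is insensitive to the presence of $n+1$ on either side and reduces to $(K_\alpha)_j\subseteq(\overline{I}_\alpha)_i$; if $j=j_0$ then $(K_\beta)_{j_0}\subseteq(\overline{I}_\beta)_i$ forces $i=i_0$, where it is equivalent to the valid inclusion $(K_\alpha)_{j_0}\subseteq(\overline{I}_\alpha)_{i_0}$, and for $i\neq i_0$ both $(K_\beta)_{j_0}\subseteq(\overline{I}_\beta)_i$ and $(K_\alpha)_{j_0}\subseteq(\overline{I}_\alpha)_i$ fail (the latter because $n\notin(\overline{I}_\alpha)_i$). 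The only delicate point is the bookkeeping with the extra strand $n+1$ and the standard indexing; conceptually it is forced, because $n+1$ is the largest element and, in both $K$ and $\overline{I}$, it travels attached exactly to the block of $n$, so passing from $\alpha$ to $\beta$ merely enlarges one block of $K$ and the corresponding block of $\overline{I}$ by the same point, which does not alter the inclusions defining the quotient.
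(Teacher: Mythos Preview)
Your proof is correct and follows essentially the same approach as the paper's own argument: both observe that passing from $\alpha$ to $\alpha\sigma_n^{\pm1}$ alters $K_\alpha$ and $\overline{I}_\alpha$ only by adjoining $n+1$ to the block containing $n$, from which the equality of quotients follows. The paper's proof is extremely terse (three sentences), whereas you carry out the bookkeeping in full, tracking the standard indexing and verifying the block--by--block inclusion equivalence explicitly; conceptually there is no difference.
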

\begin{proof}
  Firstly note that the block of $K_{\alpha\sigma_{n}^{\pm 1}} $ containing $n$ also contains $n+1$; thus the set partitions $K_{\alpha}$ and $K_{\alpha\sigma_{n}^{\pm 1}} $ differ only in the bock that contains $n$. Secondly, we deduce
then  that $\overline{I}_{\alpha}$ and $\overline{I}_{\alpha\sigma_{n}^{\pm 1}}$ also differs only in the block that contains $n$. Thus, equation (\ref{forMarkovStab})  follows.
\end{proof}

 \begin{theorem}[{Cf. \cite[Theorem 3.7]{aijuJKTR1}}]\label{Markov}
Denote  by $\sim_{tM}$ the equivalence relation on   $T\!B_{\infty}$ generated by the  following  replacements (or moves):

\begin{enumerate}
\item[M1.] t--Stabilization: for all $(I,\alpha)\in TB_n$, we can do the following replacements:
$$
(I,\alpha ) \quad \text{replaced by}\quad   (I,\alpha)(\mu_{i,j},1) \quad \text{if} \quad    i,j \quad  \text{belong to the  same  cycle of} \quad  \pi_\alpha ,
$$
\item[M2.] Commuting in $TB_n$: for all $(I_1,\alpha), (I_2,\beta)\in TB_n$, we can do the following replacement:
$$
(I_1,\alpha)(I_2,\beta )\quad\text{replaced by}\quad (I_2,\beta ) (I_1,\alpha)  ,
$$
\item[M3.] Stabilizations: for all $(I,\alpha) \in TB_n$, we can do the following replacements:
$$
 (I,\alpha) \quad \text{replaced by}\quad(I,\alpha \sigma_n ) \quad \text{or} \quad(I,\alpha \sigma_n^{-1}  ) .
$$
\end{enumerate}

  Then,   $(I, \alpha) \sim_{tM} (I,\beta)$ if and only if $ \widehat{(I,\alpha)} \sim_t \widehat{(I',\beta)}$.
\end{theorem}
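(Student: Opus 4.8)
The plan is to prove the two implications of the biconditional separately, in each case reducing to the classical Alexander--Markov calculus for links together with the description of the sc--partition of a braid closure supplied by Proposition \ref{scpartition} and its companions. Throughout I read the left-hand side as $(I,\alpha)\sim_{tM}(I',\beta)$, matching the tied braid $(I',\beta)$ appearing on the right.

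For the implication $(I,\alpha)\sim_{tM}(I',\beta)\Rightarrow\widehat{(I,\alpha)}\sim_t\widehat{(I',\beta)}$ it suffices to check that each of the moves M1, M2, M3 turns the closure of a tied braid into a $\sim_t$--equivalent combinatoric tied link. For M3 I would use that $\widehat{\alpha\sigma_n^{\pm1}}$ is ambient isotopic to $\widehat\alpha$ by the classical stabilization move, while Lemma \ref{lemmaM3} says $\overline{I}_\alpha/K_\alpha=\overline{I}_{\alpha\sigma_n^{\pm1}}/K_{\alpha\sigma_n^{\pm1}}$, so the two sc--partitions literally coincide and the closures are t--isotopic. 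For M1 the underlying link is unchanged; and if $i,j$ lie in a common cycle of $\pi_\alpha$, hence in a common block of $K_\alpha$, then $\overline{(I\ast\pi_\alpha(\mu_{i,j}))}_\alpha=I\ast\pi_\alpha(\mu_{i,j})\ast K_\alpha=\overline{I}_\alpha$ because the extra arc already sits inside a block of $\overline{I}_\alpha$, so by (\ref{J}) the sc--partition does not change. For M2 I would use that $\widehat{\alpha\beta}$ and $\widehat{\beta\alpha}$ are ambient isotopic by rotating the closure, the induced bijection on components being the one coming from $\pi_{\beta\alpha}=\pi_\alpha^{-1}\pi_{\alpha\beta}\pi_\alpha$, and then check by a direct computation with (\ref{mult}), (\ref{barI}) and (\ref{astwithw}) --- the last applied with a permutation whose cycles are the blocks of $K_{\beta\alpha}$ --- that $\pi_\alpha^{-1}$ carries the sc--partition of $\widehat{(I_1,\alpha)(I_2,\beta)}$ onto that of $\widehat{(I_2,\beta)(I_1,\alpha)}$; hence again the closures are t--isotopic.

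For the converse, suppose $\widehat{(I,\alpha)}\sim_t\widehat{(I',\beta)}$. First I would apply M1 repeatedly to replace $(I,\alpha)$ by $(\overline{I}_\alpha,\alpha)$ and $(I',\beta)$ by $(\overline{I'}_\beta,\beta)$ --- which, by the previous paragraph, does not alter the closures --- so that we may assume $K_\alpha\preceq I$ and $K_\beta\preceq I'$; then $I=K_\alpha\times(I/K_\alpha)$, $I'=K_\beta\times(I'/K_\beta)$, and $J:=I/K_\alpha$, $J':=I'/K_\beta$ are exactly the sc--partitions of the two closures. Since $\widehat\alpha\cong\widehat\beta$, the classical Markov theorem gives a chain $\alpha=\gamma_0,\gamma_1,\dots,\gamma_N=\beta$ whose steps are conjugations $\gamma_k\mapsto\delta\gamma_k\delta^{-1}$ and (de)stabilizations $\gamma_k\mapsto\gamma_k\sigma_n^{\pm1}$, chosen so that the component correspondence it induces is the bijection $w$ recorded by the given t--isotopy, i.e. $J'=w(J)$. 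Then I would lift the chain step by step: a (de)stabilization lifts by M3 followed, if needed, by an M1--normalization restoring $K\preceq I$, and by Lemma \ref{lemmaM3} the sc--partition of the closure is unchanged; a conjugation lifts by writing $(I_k,\gamma_k)=(1,\delta^{-1})(\pi_\delta(I_k),\delta\gamma_k)$ and applying M2, which yields $(\pi_\delta(I_k),\delta\gamma_k\delta^{-1})$, and since $K_{\delta\gamma_k\delta^{-1}}=\pi_\delta(K_{\gamma_k})$ and $\overline{\pi_\delta(I_k)}_{\delta\gamma_k\delta^{-1}}=\pi_\delta(\overline{(I_k)}_{\gamma_k})$, the sc--partition of the closure gets relabelled precisely by $\pi_\delta$ acting on cycles, i.e. by the component correspondence of that classical move. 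Composing over the whole chain yields $(I,\alpha)\sim_{tM}(I_N,\beta)$ with $K_\beta\preceq I_N$ and with sc--partition of the closure equal to $w(J)=J'=I'/K_\beta$; hence $I_N/K_\beta=I'/K_\beta$, and therefore $I_N=K_\beta\times(I_N/K_\beta)=K_\beta\times(I'/K_\beta)=I'$, which gives $(I,\alpha)\sim_{tM}(I',\beta)$.

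The step I expect to be the main obstacle is the clause in the previous paragraph asserting that the classical Markov chain from $\alpha$ to $\beta$ can be arranged to realize the prescribed component correspondence $w$. I would settle this by first joining $\alpha$ to $\beta$ by an arbitrary chain, observing that the correspondence it produces differs from $w$ by a self--symmetry of the link $\widehat\beta$, and then realizing that self--symmetry by conjugating $\beta$ by a braid $\delta$ whose permutation $\pi_\delta$ induces the corresponding permutation of the cycles of $\pi_\beta$ (inserting an auxiliary stabilization first if necessary); equivalently, one may carry only the iso--conjugacy class of the sc--partition through the whole argument and apply one such conjugation at the very last step to pass from $(I_N,\beta)$ to $(I',\beta)$. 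Everything else is the routine set--partition bookkeeping already exemplified by Lemma \ref{lemmaM3}, Proposition \ref{scpartition} and the examples preceding them.
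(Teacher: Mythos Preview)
Your proposal is correct and follows essentially the same route as the paper: for the forward direction you verify M1, M2, M3 via Proposition \ref{scpartition}, (\ref{astwithw}) and Lemma \ref{lemmaM3} exactly as the paper does (the paper applies $\pi_\beta$ rather than your $\pi_\alpha^{-1}$ in the M2 computation, but both conjugate $K_{\alpha\beta}$ to $K_{\beta\alpha}$ and yield the same conclusion); for the converse you reduce to the classical Markov chain on the underlying braids and track the sc--partition along it, which is precisely the paper's argument, your preliminary M1--normalization to $K_\alpha\preceq I$ being the explicit form of the paper's use of $I=K_\alpha\times J$.

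The one substantive difference is that you isolate, as the main obstacle, the question of arranging the classical Markov chain to realize the prescribed component bijection $w$, whereas the paper simply asserts that the composite of the step permutations $w_{J,J'}$ equals $w_{L,L'}$. Your instinct here is sound and your proposed remedy --- absorb the discrepancy by a further Markov self--chain on $\beta$ realizing the residual self--symmetry of $\widehat\beta$ --- is the right idea; just be aware that a single conjugation by a $\delta$ with a chosen $\pi_\delta$ need not suffice (conjugation in $S_n$ cannot interchange cycles of different lengths), so in general one must invoke a full Markov chain for the self--isotopy rather than a bare conjugation. With that caveat your argument and the paper's coincide.
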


%\begin{center}
%\begin{figure}[H]
%\includegraphics[scale=.3]{tMarkovM1.pdf}
%\caption{Markov replacement M1}\label{fig1}
%\end{figure}
%\vspace{0cm}
%\begin{figure}[H]
%\includegraphics[scale=.3]{tMarkovM3.pdf}
%\caption{Markov replacement M3}\label{fig2}
%\end{figure}
%\vspace{0cm}
%\end{center}

\begin{proof}
  Firstly,  we prove that
    the closure of a tied braid does not change under the replacement of M1, M2 and M3.   Consider    the replacement M1 on $(I,\alpha)$:  according to Proposition \ref{scpartition}, the set partition corresponding to the combinatoric tied link $\widehat{(I,\alpha)(\mu_{ij}, 1)}$ is given by
$$
((I\ast \pi_{\alpha}(\mu_{i,j})) \ast K_{\alpha})/K_{\alpha}.
$$
But $(I\ast  \pi_{\alpha}(\mu_{i,j})) \ast K_{\alpha} = I\ast  K_{\alpha} $, since $  \pi_{\alpha} (\mu_{i,j})\preceq K_{\alpha}$, see (\ref{partial}).  Thus, the closures of $(I, \alpha)$ and $(I,\alpha)(\mu_{ij}, 1)$ have the same sc--partition.

Secondly, we   check that
$(\widehat{\alpha\beta}, J_1):=\widehat{(I_1,\alpha)(I_2,\beta)}$ and   $(\widehat{\beta\alpha}, J_2):=(I_2,\beta)(I_1,\alpha)$ are t--isotopic. Indeed, by Proposition \ref{scpartition}:
$$
J_1 =  ((I_1\ast \pi_{\alpha}(I_2)) \ast K_{\alpha\beta})/K_{\alpha\beta}\quad \text{and}\quad
J_2 = ((I_2\ast \pi_{\beta}(I_1))\ast K_{\beta\alpha})/K_{\beta\alpha}.
$$
Applying $\pi_{\beta}$ to   the  right member of the  first equality,  we get
$$
  (\pi_{\beta}(I_1)\ast \pi_{\beta}(\pi_{\alpha}(I_2))  \ast \pi_{\beta}(K_{\alpha\beta}))/\pi_{\beta}(K_{\alpha\beta}).
$$
Notice now that $\pi_{\beta}(K_{\alpha\beta}) = K_{\beta(\alpha\beta)\beta^{-1}} = K_{\beta\alpha}$. Then,
applying now  (\ref{astwithw}) to $\pi_{\beta\alpha}(I_2) \ast K_{\beta\alpha}$ in the last expression, we obtain
$$
 \pi_{\beta}(I_1)\ast
(I_2 \ast K_{\beta\alpha})/K_{\beta\alpha}  =  J_2 .
$$
Hence,   setting  $K:= K_{\alpha\beta}$ and $K':=K_{\beta\alpha}$,  we  have $J_2= w_{K,K'}(J_1)$, so  that  the  sc--partitions  $J_1$  and  $J_2$ are iso--conjugate; this, together with the fact that  $\widehat{\alpha\beta}$ and $\widehat{\beta\alpha}$ are isotopic,   implies  that  $\widehat{(I_1,\alpha)(I_1,\beta)}$ and   $\widehat{(I_2,\beta)(I_1,\alpha)}$ are t--isotopic.

Finally, notice now that Lemma \ref{lemmaM3}  shows that the replacement M3 on $(I,\alpha)$ does not affect its closure.

To prove the   statement  in the other direction,
   let us suppose  that    two $t$--isotopic  combinatoric tied links $(L,J)$  and $(L',J')$    are  the  closures of  two  tied  braids $(I,\alpha)$  and $(I',\alpha')$.  We have to prove    that $(I,\alpha)\sim_{tM}(I',\alpha')$.   We  suppose  that  the ordering  of the  components in $J$  and  $J'$  corresponds, respectively,  to  that  induced  by  $K_\alpha$  and  $K_{\alpha'}$.

 Now,  from  the  Markov  theorem for  classical  links  we  know   that  the  braids  $\alpha$  and  $\alpha'$ are    Markov equivalent, i.e.,   they are  related  by  a  sequence of    replacements M2   and/or  M3, where    the set partitions are  neglected.  From (\ref{IfromKJ}), we  have
$$
I=K_\alpha \times  J    \quad \text{and}\quad  I'= K_{\alpha'} \times J'.
$$
  Observe   also  that  $J$ and  $J'$  are  set partitions iso--conjugate  of $\bf{k}$,  $k$ being the  number of  components of  $L$ and $L'$; we  write $J'=w(J)$, with $w\in S_k$.  On the other hand,  $K_\alpha$ and  $K_{\alpha}'$  are  set partitions    with $k$ blocks, respectively, of some  $\bf{n}$  and  $\bf{n'}$. Since the M1 replacement  does not affect  the  partition   $K_\alpha$,
we have      to prove  that   the  sequence of  replacements    M2  and/or   M3  that transform  $\alpha$ into  $\alpha'$,    transforming by consequence   the partition  $K_\alpha$ into $K_{\alpha'}$, induce the  permutation $w$   above.  Indeed, observe  firstly that    the  set partition $K_\alpha$   is transformed step by  step  into a  sequence  of  $  r$ set partitions $K_{\alpha_j}$  (with   $K_{\alpha_1}=K_{\alpha}$ and  $K_{\alpha_r }=K_{\alpha'}$)   as  long as   $\alpha$  is  transformed by moves  M2 and/or M3 in  the  sequence  $\alpha_j$, with   $\alpha_1 =\alpha$ and  $\alpha_r =\alpha'$.   Secondly, notice  that each partition  $K_{\alpha_j}$  has $k$  blocks,  and  that for every pair $(j,j+1)$,  writing for short  $J$ for  $K_{\alpha_j}$ and  $J'$ for $K_{\alpha_{j+1}}$,  the permutation $w_{J,J'}$ is  the  identity in the  case of  move M3,  and different from the  identity  for the  move M2.    Since  $L$ is  the closure of $\alpha$ and  $L'$ is  the  closure of $\alpha'$,  the  product of  all  $w_{J,J'}$  coincides with  the permutation $w_{L,L'}$ operating the iso--conjugation between the combinatoric tied links $(L,J)$ and $(L',J')$.

\end{proof}

  Theorems \ref{Alexander} and \ref{Markov} imply the following
\begin{corollary}
The mapping $\alpha \mapsto \widehat{\alpha}$ defines a bijection between  $T\!B_{\infty}/\sim_{tM}$ and $\mathcal{L}^t/ \sim_t$.
\end{corollary}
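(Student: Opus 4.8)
The plan is to obtain the corollary as a formal consequence of Theorems \ref{Alexander} and \ref{Markov}, exactly as the classical statement that closure induces a bijection $B_\infty/\!\sim_M\;\longleftrightarrow\;\mathcal L/\!\sim$ follows from the classical Alexander and Markov theorems. There is no new computation to carry out; every ingredient has already been established.

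First I would verify that $\alpha\mapsto\widehat\alpha$ is well defined on equivalence classes: if $(I,\alpha)\sim_{tM}(I',\alpha')$, the ``only if'' direction of Theorem \ref{Markov} gives $\widehat{(I,\alpha)}\sim_t\widehat{(I',\alpha')}$, so $[(I,\alpha)]\mapsto[\widehat{(I,\alpha)}]$ is a well-defined map $T\!B_{\infty}/\!\sim_{tM}\ \to\ \mathcal L^t/\!\sim_t$. Surjectivity is precisely Theorem \ref{Alexander}: given $(L,J)$, choose a braid $\alpha$ with $\widehat\alpha=L$, relabel the components of $L$ according to the standard indexation of the blocks of $K_\alpha$ to obtain $\tilde J$ from $J$, and set $I:=K_\alpha\times\tilde J$; then $\widehat{(I,\alpha)}\sim_t(L,J)$, so $[(L,J)]$ lies in the image. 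Injectivity is the ``if'' direction of Theorem \ref{Markov}: $\widehat{(I,\alpha)}\sim_t\widehat{(I',\alpha')}$ forces $(I,\alpha)\sim_{tM}(I',\alpha')$, hence the two classes in $T\!B_{\infty}/\!\sim_{tM}$ coincide. Combining these three facts gives the asserted bijection.

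The only point needing a word of care is the clause ``up to a renumbering of the components'' appearing in Theorem \ref{Alexander}: one must observe that $\sim_t$ already identifies combinatoric tied links differing only by a renumbering of components, since iso-conjugacy of the sc-partitions is built into the definition of t-isotopy; this is exactly what makes the passage from $(I,\alpha)$ with $I=K_\alpha\times\tilde J$ back to the originally numbered $(L,J)$ harmless. Once this remark is in place there is no genuine obstacle — the corollary is purely a matter of assembling the Alexander and Markov theorems for combinatoric tied links.
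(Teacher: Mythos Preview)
Your proposal is correct and matches the paper's approach exactly: the paper simply states that the corollary follows from Theorems \ref{Alexander} and \ref{Markov} without giving any further argument, and your write-up spells out precisely the well-definedness/surjectivity/injectivity deduction this entails. Your additional remark about the ``up to a renumbering'' clause being absorbed by the definition of $\sim_t$ is a helpful clarification that the paper leaves implicit.
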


\begin{example} \rm  We  show how   the replacement M1 works. Consider the tied braids $(I,\alpha)$,   in Figure \ref{Fig10}, and $(I,\alpha)(\mu_{3,6}, 1)$,  see Figure  \ref{Fig7}. Here $K_\alpha=\{\{1,2\},\{3,6\},\{4\},\{5\}\}$, so  $\pi_\alpha(\mu_{3,6})=\mu_{3,6}$. Clearly,  $\{\{3,6\}\}\preceq K_\alpha$.

\begin{center}
 \begin{figure}[H]
 \includegraphics{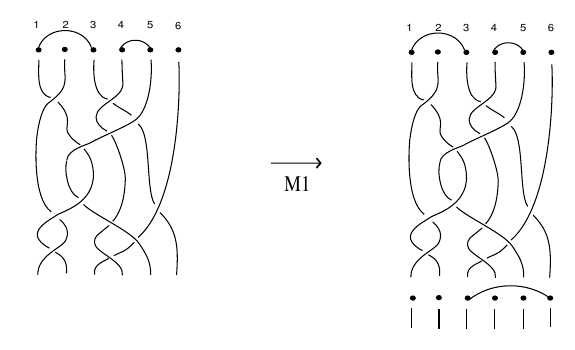}
 \caption{ $(I,\alpha)(\mu_{3,6},1)$ has  the  same  closure as $(I,\alpha)$  shown in  Fig. \ref{Fig10}.}
\label{Fig7}
 \end{figure}
 \end{center}

\end{example}

 \begin{example} \rm
  We  show how   the replacement M2 works. In   Figure  \ref{Fig6a},  we  see  two  braids $\alpha\beta$  and $\beta\alpha$, with $K_{\alpha \beta}=\{\{1,3\},\{2\},\{4\} \}$   and  $  K_{ \beta\alpha } =\{\{1\},\{2,4\},\{3\},  \} $, so  that $\pi_{\alpha\beta}=(1,3)$, $\pi_{\beta\alpha}= (2,4)$.

In  Figure \ref{Fig6}  we  see  the tied braids  $(I_1,	\alpha )$ and $(I_2, \beta)$, with $I_1 = \{\{1,2 \},\{3\},\{4\} \}$ and $I_2= \{ \{1\},  \{2,4\},\{3\} \}$.   Consider   now the  closures of $(I_1,	\alpha )(I_2, \beta)$  and   $(I_2, \beta)(I_1,	 \alpha )$.  These tied links are, respectively, $(\widehat{\alpha\beta}, J_1 )$ and $(\widehat{\beta\alpha}, J_2)$, with  the  sc--partitions  $J_1$  and  $J_2$   given  by
$$
J_1 = \overline {I}_{\alpha\beta}/K_{\alpha\beta} \quad \text{and}\quad
J_2 = \overline {I}_{\beta \alpha}/K_{ \beta \alpha},
$$
where
$$ \overline {I}_{\alpha\beta }:= I_1\ast \pi_{\alpha}(I_2)\ast K_{\alpha\beta} \quad \text{and}\quad \overline {I}_{\beta \alpha}:= I_2\ast \pi_{\beta}(I_1)\ast K_{ \beta\alpha}.
$$
We have
 $\pi_\alpha= (1,4)(2,3)$    and   $\pi_\beta=(1,2,3,4),$  so  that:
$$\pi_\alpha(I_2) = \{\{1,3\},\{2\},\{4\} \}\quad \text{and}\quad \pi_\beta(I_1)  =  \{\{1\},\{2,3 \},\{4\}  \}   ,$$
 and
 $$I_1\ast \pi_\alpha(I_2)\ast  K_{\alpha\beta}=  \{\{1,2,3 \},\{4\}\} ,$$
  $$I_2\ast \pi_\beta(I_1)\ast  K_{ \beta\alpha}=    \{\{1\},\{2,3,4\} \} .$$
Finally,
$$J_1=\{\{1,2,3\},\{4\}  \}/\{\{1,3\}_1,\{2\}_2,\{4\}_3 \}= \{\{1,2\}, \{3\}\},$$
 and
 $$J_2=\{\{1 \},\{2 ,3,4\}  \}/\{\{1 \}_1,\{2,4\}_2,\{3\}_3  \} = \{\{1 \}, \{2, 3\}\}.$$
Observe  now  that  $\pi_\beta(K_{\alpha\beta})=K_{\beta\alpha}$,  and  the corresponding  permutation  of $S_3$ is  $w_{K,K'}=(1,2,3)$. Indeed, $J_2=w_{K,K'}(J_1)$.

\begin{center}
 \begin{figure}[H]
 \includegraphics{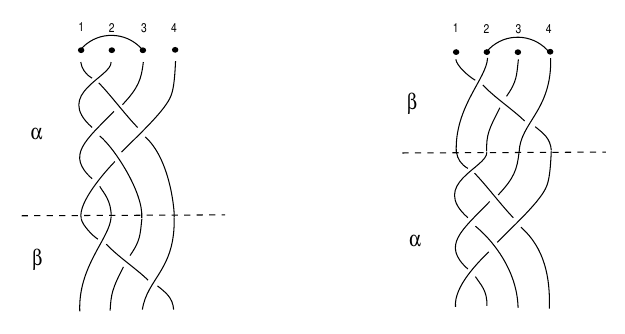}
 \caption{The tied   braids  $(K_{\alpha\beta},\alpha\beta)$  and  $(K_{\beta\alpha},\beta\alpha)$.}
\label{Fig6a}
 \end{figure}
 \end{center}

\begin{center}
 \begin{figure}[H]
 \includegraphics{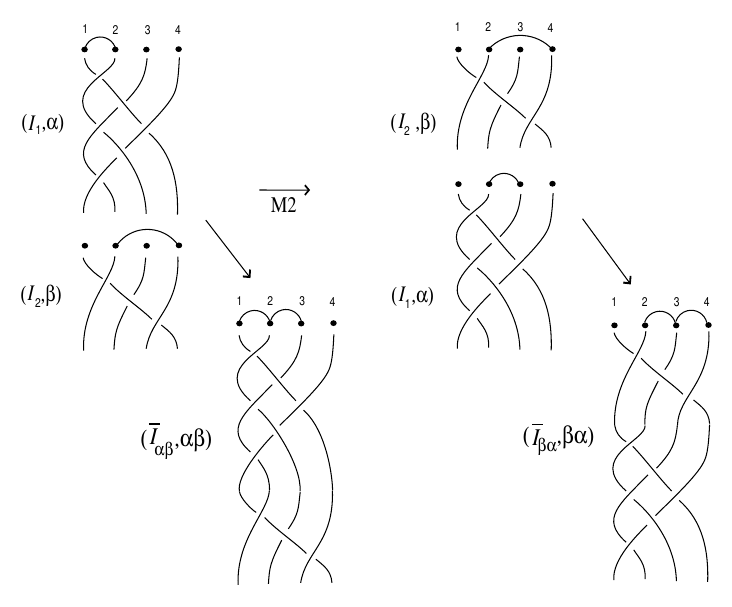}
 \caption{Here  $I_1=\{\{1,2\}\}$  and  $I_2=\{\{2,4\}\}$. }
\label{Fig6}
 \end{figure}
 \end{center}
\end{example}

\section{Invariant for singular links}
  In this section we  define   four families of invariants for singular links constructed by using the Jones recipe applied to the bt--algebra. We discuss also their definitions by skein relations. We start the section with a short recalling of the singular links theory.
\subsection{}

A singular link  is a classical link   admitting simple singular points. Thus,   singular links are a generalization of classical links.    Singular links can be studied trough singular braids: two   singular links are isotopic if their respective singular braids  are Markov equivalents; below we will be more precise.

Let $\SB$ be the {\it singular braid monoid} defined independently by Baez \cite{baLMP}, Birman \cite{biBAMS} and Smolin \cite{smLN}.  $\SB$ is defined by
the elementary braid generators and their inverses $\sigma_1^{\pm 1}, \ldots,
\sigma_{n-1}^{\pm}$ and by  the  elementary singular braid generators $\tau_1, \ldots, \tau_{n-1}$, which  are subjected, besides the
  braid relations among the $\sigma_i$'s, to  the    following relations:
\begin{eqnarray}\label{SB}
\begin{array}{rclcll}
 \tau_i \tau_j & = & \tau_j \tau_i   \qquad \text{for $\vert i  -  j\vert >1$},\\
\sigma_i \tau_i & = &  \tau_i\sigma_i  \qquad \text{for all $i$}, \\
\sigma_i \tau_j & = & \tau_j \sigma_i  \qquad \text{for $\vert i  -  j\vert >1$}, \\
 \sigma_i \sigma_j \tau_i &  = &   \tau_j \sigma_i \sigma_j  \qquad \text{for $\vert i  -  j \vert = 1$}.
\end{array}
\end{eqnarray}
This monoid is the basis for the Alexander theorem and for the  Markov theorem for singular links, which are due, respectively,  to J. Birman \cite{biBAMS} and B. Gemein \cite{geJKTR}.
More precisely, we have the following theorem.
\begin{theorem}\label{MarkovAlexanderSingular}
Every singular link can be obtained as closure of a singular braid and the closures of two singular braids are isotopic singular links if and only if the singular braids can be  obtained one from the other by  means of a finite number of  replacements Ms1 and/or Ms2, where:
\begin{enumerate}
\item[Ms1.] For all $\alpha , \beta \in SB_n$:
$\alpha\beta\quad\text{is replaced by}\quad \beta \alpha,$
\item[Ms2.] For all $\alpha \in SB_n$:
$\alpha \quad \text{replaced by} \quad \alpha \sigma_n  \quad \text{or} \quad \alpha \sigma_n^{-1}$.
\end{enumerate}

\end{theorem}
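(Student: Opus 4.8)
The plan is to reduce both statements to their classical analogues by treating every singular point as a rigid $4$--valent vertex sitting inside a small ``box'' that is transported around as a single unit.

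\smallskip
\noindent\textbf{Alexander part.} Let $D$ be a diagram of a singular link $L$, isotoped so that a disjoint family of small discs (boxes) contains all the singular points, each box meeting $D$ in exactly the two transverse strands through its vertex. Apply a braiding algorithm of Alexander--Yamada--Vogel type to the remaining diagram: choose a point $O$ off $D$, orient all arcs, and perform finitely many threading moves pushing an arc that winds the ``wrong'' way around $O$ across $O$, until every arc of $D$ runs monotonically around $O$. The only new point is that a threading move must avoid the singular boxes; since each box is rigid, one may always choose the threaded arc outside the boxes, dragging a box along the threading path whenever it is in the way. The final diagram is the closure of a singular braid $\beta\in\SB$ for a suitable $n$, whence $L=\widehat{\beta}$; the algorithm terminates exactly as in the classical argument, the boxes contributing nothing to the complexity.

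\smallskip
\noindent\textbf{Markov, easy direction.} Ms1 replaces $\widehat{\alpha\beta}$ by the isotopic link $\widehat{\beta\alpha}$ (slide the sub-braid around the closure arcs), and Ms2 adds to $\widehat{\alpha}$ a single curl, removed by a Reidemeister~I move; hence both moves preserve the isotopy class of the closed singular link. Note that one must \emph{not} allow a $\tau_n$--stabilization $\alpha\mapsto\alpha\tau_n$: its closure carries an extra singular point and so is not isotopic to $\widehat{\alpha}$. This is precisely why the stabilization move in the theorem uses only $\sigma_n^{\pm 1}$.

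\smallskip
\noindent\textbf{Markov, hard direction --- the main obstacle.} Suppose $\widehat{\alpha}$ and $\widehat{\alpha'}$ are isotopic singular links, with $\alpha\in SB_n$ and $\alpha'\in SB_{n'}$. One decomposes the ambient isotopy into a finite sequence of elementary diagram moves: the planar Reidemeister moves RI, RII, RIII performed away from the vertices, together with the singular moves that slide a strand over or under a singular vertex (no move rotates a vertex). Keeping the diagram braided at every stage except in the small region where a single elementary move occurs, one shows that each such local move --- with the rest of the diagram in braid position --- alters the underlying singular braid word only by a finite chain of moves Ms1 and Ms2. For RI, RII, RIII this is exactly the classical Markov theorem in diagrammatic form; for the singular moves one uses the defining relations of $\SB$, in particular $\sigma_i\sigma_j\tau_i=\tau_j\sigma_i\sigma_j$ for $|i-j|=1$ and $\sigma_i\tau_i=\tau_i\sigma_i$, to verify that pushing a strand past a singular vertex merely conjugates the word and inserts or removes $\sigma_n^{\pm 1}$. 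The delicate step --- the one genuinely requiring Gemein's argument --- is guaranteeing that the diagrams produced while executing a local move can themselves be kept braided; this is handled by the axis-control technique of Birman--Menasco applied to the complement of the singular boxes. Assembling all the local statements shows that $\alpha$ and $\alpha'$ are related by a finite sequence of moves Ms1 and Ms2, which completes the proof.
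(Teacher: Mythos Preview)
The paper does not prove this theorem at all: it is stated as a known result, with the Alexander part attributed to Birman \cite{biBAMS} and the Markov part to Gemein \cite{geJKTR}. So there is no ``paper's own proof'' to compare against; your task was in effect to reconstruct the literature arguments.

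Your outline follows the standard strategy of those references. The Alexander part (rigid-vertex boxes plus a Yamada--Vogel style braiding algorithm) and the easy direction of Markov are fine. For the hard direction of Markov, however, your write-up is not a proof but a description of what a proof would have to accomplish: you name the ingredients (Reidemeister moves away from vertices, the singular slide moves, the $\SB$ relations, axis control \`a la Birman--Menasco) and then explicitly concede that ``the delicate step --- the one genuinely requiring Gemein's argument'' is not being carried out. That is the whole content of the theorem in this direction. In particular, the assertion that each local singular move ``merely conjugates the word and inserts or removes $\sigma_n^{\pm 1}$'' is exactly what must be established case by case, and doing so while maintaining a braided diagram at every intermediate stage is nontrivial; Gemein's paper handles this via an adaptation of the $L$--move approach rather than Birman--Menasco axis control, so even your pointer to the technique is not quite the one actually used. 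As written, then, your proposal is an accurate roadmap but not a self-contained proof of the Markov half.
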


\subsection{}

In this subsection we  define invariants of singular  links by using the Jones recipe applied to the bt--algebra, that is, the invariants are obtained essentially from the composition $\rho\circ \pi$, where $\pi$ is a representation of $SB_n$
in the bt--algebra and $\rho$ the trace on it, see Theorem \ref{tracebt}.

 Set $\w,\x$ and $\y$  three variable commuting among them and with $\a$ and $\b$. Define $\mathbb{L}$ as the field of rational functions $\mathbb{K}(\a,\b, \x,\y,\w)$. From now on we work on the $\mathbb{L}$--algebra $\mathcal{E}_n(\u)\otimes_{\mathbb{K}}\mathbb{L}$ which is denoted again by $\mathcal{E}_n(\u)$, or simply by  $\mathcal{E}_n.$

\begin{proposition}\label{homopsiphi}
 We have:
\begin{enumerate}
\item The mappings $\sigma_i\mapsto \w T_i$ and $\tau_i\mapsto \x + \y \w T_i$ define a   monoid homomorphism, denoted by $\psi_{n,\w, \x,\y}$,    from $SB_n$ to $\mathcal{E}_n(u)$.
\item The mappings $\sigma_i\mapsto \w T_i$ and $\tau_i\mapsto \x E_i + \y\w E_iT_i$ define a   monoid homomorphism, denoted by $\phi_{n,\w, \x,\y}$, from $SB_n$ to $\mathcal{E}_n(u)$.
  \item The mappings obtained  by  replacing $T_i$ with $V_i$  in items (1) and (2) (see Remark \ref{Vi}),  define two monoid  homomorphisms,  denoted  respectively    $\psi^{\prime}_{n,\w, \x, \y}$ and  $\phi^{\prime}_{n,\w, \x, \y}$, from $SB_n$ to  $\mathcal{E}_n(\v)$.
\end{enumerate}

\end{proposition}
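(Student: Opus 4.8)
The plan is to invoke the universal property of a monoid given by generators and relations: for each of the four assignments it suffices to check that the images of $\sigma_i^{\pm1}$ and $\tau_i$ satisfy, in $\E_n(\u)$ (resp. $\E_n(\v)$), the defining relations of $\SB$ — the braid relations among the $\sigma_i$'s and the four families in (\ref{SB}) — and that the images of $\sigma_i$ and $\sigma_i^{-1}$ are mutually inverse. The last point is immediate: $\w T_i$ has inverse $\w^{-1}T_i^{-1}$ by (\ref{Tinverse}), and $\w V_i$ has inverse $\w^{-1}V_i^{-1}$ by (\ref{Ttildeinverse}).

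For item (1), with $\tau_i\mapsto\x+\y\w T_i$, I would expand both sides of each relation: the braid relations among the $\sigma_i$'s hold because the $T_i$'s satisfy (\ref{bt7})--(\ref{bt8}) and $\w$ is central; $\sigma_i\tau_i=\tau_i\sigma_i$ is trivial, both sides being a polynomial in the single element $T_i$; $\tau_i\tau_j=\tau_j\tau_i$ and $\sigma_i\tau_j=\tau_j\sigma_i$ for $|i-j|>1$ reduce to $T_iT_j=T_jT_i$, i.e. (\ref{bt7}); and $\sigma_i\sigma_j\tau_i=\tau_j\sigma_i\sigma_j$ for $|i-j|=1$ reduces, after cancelling a common term, exactly to $T_iT_jT_i=T_jT_iT_j$, i.e. (\ref{bt8}). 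So (1) uses nothing beyond the braid relations among the $T_i$'s and their invertibility.

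For item (2), with $\tau_i\mapsto\x E_i+\y\w E_iT_i$, the same expansions dispose of every relation of (\ref{SB}) with $|i-j|>1$ or $i=j$, using only the commutations (\ref{bt1}), (\ref{bt3}), (\ref{bt7}), the relation $E_iT_i=T_iE_i$ of (\ref{bt4}) and $E_i^2=E_i$. The one relation that needs more is $\sigma_i\sigma_j\tau_i=\tau_j\sigma_i\sigma_j$ for $|i-j|=1$; after expanding both sides one is left to prove $T_iT_jE_i=E_jT_iT_j$ and $T_iT_jE_iT_i=E_jT_jT_iT_j$. The first is the instance of the conjugation formula (\ref{w(E)}) for the reduced word $w=s_is_j$: since $s_is_j$ carries the block $\{i,i+1\}$ (the support of $E_i$) to $\{j,j+1\}$ (the support of $E_j$), one gets $T_iT_j\,E_i\,(T_iT_j)^{-1}=E_j$; the second then follows from the first and (\ref{bt8}) via $T_iT_jE_iT_i=E_jT_iT_jT_i=E_jT_jT_iT_j$. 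I expect this to be the only non-mechanical point — the observation that the singular braid relation $\sigma_i\sigma_j\tau_i=\tau_j\sigma_i\sigma_j$ translates precisely into the $S_n$-equivariance of the idempotents $E_I$.

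For item (3), no new computation is needed. The verifications for (1) and (2) used only relations (\ref{bt1}), (\ref{bt3}), (\ref{bt4}), (\ref{bt7}), (\ref{bt8}), invertibility of the braid generators, and (\ref{w(E)}). By Remark \ref{Vi} the $V_i$'s and $E_i$'s satisfy (\ref{bt3})--(\ref{bt8}), the $V_i$'s are invertible by (\ref{Ttildeinverse}), and (\ref{bt1}) involves only the $E_i$'s; moreover the proof of (\ref{w(E)}) in \cite{rhJAC} uses only relations (\ref{bt3})--(\ref{bt8}) and not the quadratic relation, hence holds verbatim with $V_i$ in place of $T_i$. Therefore the identical expansions show that $\psi'_{n,\w,\x,\y}$ and $\phi'_{n,\w,\x,\y}$ are monoid homomorphisms from $\SB$ to $\E_n(\v)$.
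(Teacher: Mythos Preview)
Your proof is correct and follows exactly the approach the paper takes, namely verifying that the images of the generators satisfy the defining relations of $\SB$; the paper simply declares this verification ``a routine \ldots left to the reader'' (noting that item (2) generalizes \cite[Proposition 3]{aijuMMJ1}), whereas you carry out the details explicitly. The only nontrivial step you isolate --- that $\sigma_i\sigma_j\tau_i=\tau_j\sigma_i\sigma_j$ for $|i-j|=1$ under $\phi$ reduces to the conjugation identity $T_iT_jE_i=E_jT_iT_j$, an instance of (\ref{w(E)}), and that for item~(3) this transfers to the $V_i$'s because only relations (\ref{bt1})--(\ref{bt8}) are used --- is precisely the content implicit in the paper's ``routine''.
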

\begin{proof}
  We need to verify that  such mappings  respect  the  defining
   relations of $SB_n$; this checking is a routine and is left to the reader. Notice that the second claim  is  a generalization of  \cite[Proposition 3]{aijuMMJ1}.
\end{proof}
\begin{remark}\label{thomopsiphiprime}\rm
   We will justify later  the distinction apparently superfluous between   $\psi_{n,\w, \x, \y}$ and  $\psi^{\prime}_{n,\w, \x, \y}$ and between  $\phi_{n,\w, \x, \y}$ and  $\phi^{\prime}_{n,\w, \x, \y}$.
\end{remark}

  In order to derive invariants from the homomorphism of Proposition \ref{homopsiphi}, we note that, due to replacement Ms2 of Theorem \ref{MarkovAlexanderSingular},  $\w$ must satisfy  (by using Theorem \ref{tracerho} and   (\ref{Tinverse})):
\begin{equation}\label{defcpsi}
\w^2= \frac{(\rho_n \circ \psi_{n,\w,\x, \y})(\sigma_{n-1}^{-1})}{(\rho_n \circ \psi_{n,\w,\x, \y})(\sigma_{n-1})}= \frac{(\rho_n \circ \phi_{n,\w,\x, \y})(\sigma_{n-1}^{-1})}{(\rho_n \circ \phi_{n,\w,\x, \y})(\sigma_{n-1})}=\frac{\a  + (1-\u)\b}{\a\u}.
\end{equation}
Now, set
$
\c:=\w^2.
$
Then,  for any  singular link  $L$, obtained as the closure of a singular  braid $\omega\in SB_n$,  we  define:
\begin{equation}\label{Psi}
\Psi_{\x, \y}(L) := \left(\frac{1}{\a\sqrt{\c}}\right)^{n-1}
(\rho_n\circ \psi_{n, \sqrt{\c},\x, \y} )(\omega),
\end{equation}

and
\begin{equation}\label{Phi}
\Phi_{\x, \y}(L) :=  \left(\frac{1}{\a\sqrt{\c}}\right)^{n-1}
(\rho_n\circ \phi_{n,\sqrt{\c},\x, \y} )(\omega).
\end{equation}
  Notice that  $\Psi_{\x,\y}$ and $\Phi_{\x,\y}$ take  values in $\mathbb{K}(\a,\x,\y,\sqrt{\c}) = \mathbb{K}(\b ,\x,\y,\sqrt{\c}).$
\begin{theorem}\label{PsiPhi}
The functions $\Psi_{\x, \y}$ and $\Phi_{\x, \y}$ are    ambient isotopy invariants of singular links.
\end{theorem}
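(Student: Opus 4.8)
The plan is to verify that $\Psi_{\x,\y}$ and $\Phi_{\x,\y}$ are invariant under the two Markov moves Ms1 and Ms2 of Theorem \ref{MarkovAlexanderSingular}, since a singular link is determined by a singular braid up to these moves. I will treat $\Psi_{\x,\y}$ in detail; the argument for $\Phi_{\x,\y}$ is word-for-word the same, using $\phi_{n,\w,\x,\y}$ in place of $\psi_{n,\w,\x,\y}$ and the fact (displayed in \eqref{defcpsi}) that the trace ratio governing Ms2 is the same for both homomorphisms.

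First I would dispose of Ms1 (conjugation / cyclic invariance). If $\omega = \alpha\beta$ and $\omega' = \beta\alpha$ with $\alpha,\beta\in SB_n$, then since $\psi_{n,\sqrt{\c},\x,\y}$ is a monoid homomorphism (Proposition \ref{homopsiphi}), $\psi(\omega) = \psi(\alpha)\psi(\beta)$ and $\psi(\omega') = \psi(\beta)\psi(\alpha)$; by property (1) of the Markov trace in Theorem \ref{tracerho}, $\rho_n(\psi(\alpha)\psi(\beta)) = \rho_n(\psi(\beta)\psi(\alpha))$. The normalizing factor $(\a\sqrt{\c})^{-(n-1)}$ depends only on $n$, which is unchanged, so $\Psi_{\x,\y}(\widehat{\omega}) = \Psi_{\x,\y}(\widehat{\omega'})$.

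Next comes Ms2 (stabilization), which is where the real computation lives. Suppose $\omega\in SB_n$ and consider $\omega\sigma_n^{\pm 1}\in SB_{n+1}$. Writing $X := \psi_{n,\sqrt{\c},\x,\y}(\omega)\in\mathcal{E}_n(\u)\subseteq\mathcal{E}_{n+1}(\u)$, we have $\psi_{n+1,\sqrt{\c},\x,\y}(\omega\sigma_n^{\pm 1}) = X\cdot(\sqrt{\c}\,T_n)^{\pm 1}$. For the $+$ case, property (2) of Theorem \ref{tracerho} gives $\rho_{n+1}(X\sqrt{\c}\,T_n) = \sqrt{\c}\,\a\,\rho_n(X)$, so the extra normalizing factor $(\a\sqrt{\c})^{-1}$ exactly cancels it and $\Psi_{\x,\y}$ is unchanged. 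For the $-$ case I would expand $T_n^{-1}$ via \eqref{Tinverse} as $T_n + (\u^{-1}-1)E_n + (\u^{-1}-1)E_nT_n$, apply $\rho_{n+1}$ termwise using rules (2) and (3), obtaining $\rho_{n+1}(XT_n^{-1}) = \bigl(\a + (\u^{-1}-1)\b + (\u^{-1}-1)\a\bigr)\rho_n(X)$; one checks this equals $\bigl(\a+(1-\u)\b\bigr)/\u \cdot \rho_n(X)$ up to the expected factor, and the defining choice $\c = \w^2 = \bigl(\a+(1-\u)\b\bigr)/(\a\u)$ in \eqref{defcpsi} is precisely what makes the rescaling consistent, i.e. makes $(\a\sqrt{\c})^{-1}\cdot\sqrt{\c}^{-1}\cdot\rho_{n+1}(XT_n^{-1})$ equal $\rho_n(X)$ times the right power of the normalizer. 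Thus $\Psi_{\x,\y}(\widehat{\omega\sigma_n^{-1}}) = \Psi_{\x,\y}(\widehat{\omega})$.

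Finally, I would note that $\Psi_{\x,\y}$ and $\Phi_{\x,\y}$ are well-defined as functions on singular links because any two singular braids with isotopic closures are related by a finite chain of Ms1 and Ms2 moves (Theorem \ref{MarkovAlexanderSingular}) and we have just shown the value is preserved along each move; combined with the base case $\rho_n(1)=1$ this also pins down the values on trivial links. The main obstacle is purely bookkeeping: getting the powers of $\a$ and $\sqrt{\c}$ to balance in the $\sigma_n^{-1}$ stabilization, which is exactly the role of the equation \eqref{defcpsi} defining $\c$; there is no conceptual difficulty once one observes that the trace ratio $\rho(\sigma_{n-1}^{-1})/\rho(\sigma_{n-1})$ coincides for $\psi$ and $\phi$, so a single choice of $\c$ handles both families.
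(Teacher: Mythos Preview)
Your proposal is correct and follows essentially the same approach as the paper: verify Ms1 via the trace property and the homomorphism, and verify Ms2 by expanding $T_n^{-1}$ with \eqref{Tinverse}, applying the trace rules of Theorem \ref{tracerho}, and using the defining equation \eqref{defcpsi} for $\c$ to balance the normalizing factor. The paper's proof is slightly terser (it writes out only the $\sigma_n^{-1}$ case and says the rest is analogous), but the content is the same.
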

\begin{proof}
  We have to   prove that the  functions $\Psi_{\x, \y}$ and $\Phi_{\x, \y}$ respect  the moves Ms1 and Ms2 of Theorem \ref{MarkovAlexanderSingular}.  In  fact, both functions respect Ms1 as consequence  of rule (1) of Theorem \ref{tracerho}, together with fact that $\psi_{n,\sqrt{\c},\x, \y}$ and $\phi_{n,\sqrt{\c},\x, \y}$ are  homomorphisms.

We check now that $\Psi_{\x, \y}(\widehat{\omega\sigma_n^{-1}}) =\Psi_{\x, \y} (\widehat\omega)$, for $\omega\in SB_n$. We have:
\begin{eqnarray*}
(\rho_{n+1}\circ \psi_{n+1, \sqrt{\c},\x, \y} )(\omega\sigma_n^{-1}) =\frac{1}{ \sqrt{\c}}\rho_{n+1} (\omega T_n^{-1}) & = &  \rho_n(\omega)\frac{\a +(\u^{-1}-1)\b + (\u^{-1}-1)\a}{\sqrt{\c}}\\
& = &  \frac{(1-\u)\b + \a}{\u\sqrt{\c}}\rho_n(\omega);
\end{eqnarray*}
hence, $(\rho_{n+1}\circ \psi_{n+1, \sqrt{\c},\x, \y} )(\omega\sigma_n^{-1}) =\a\sqrt{\c}\rho_n(\omega)$. Then,
$$
\Psi_{\x, \y}(\widehat{\omega\sigma_n^{-1}})= \left(\frac{1}{\a\sqrt{\c}}\right)^n
\a\sqrt{\c}\rho_n(\omega) =\Psi_{\x, \y}(\widehat{\omega}).
$$
In the same way we prove  that $\Psi_{\x, \y}(\widehat{\omega\sigma_n})= \Psi_{\x, \y}(\widehat\omega) $. The proof that $\Phi_{\x, \y}$ respect Ms2 is   analogous.
\end{proof}

The invariants $\Psi_{\x, \y}$ and $\Phi_{\x, \y}$ have, respectively, companions $\Psi_{\x, \y}^{\prime}$ and $\Phi_{\x, \y}^{\prime}$, which we   define   now.
   Firstly,  notice that,   because of (3)  Proposition \ref{homopsiphi},   we need in this  case,    by using Theorem \ref{tracerho}:
\begin{equation}\label{defcphi}
\w^2 = \frac{(\rho_n \circ \psi_{n,\w,\x, \y}^{\prime})(\sigma_{n-1}^{-1})}{(\rho_n \circ \psi_{n,\w,\x, \y}^{\prime})(\sigma_{n-1})}=\frac{(\rho_n \circ \phi_{n,\w,\x, \y}^{\prime})(\sigma_{n-1}^{-1})}{(\rho_n \circ \phi_{n,\w,\x, \y}^{\prime})(\sigma_{n-1})}=\frac{\a +(1-\v^2  )\b}{\a },
\end{equation}
  being $\rho_n (V_{n-1})=\a\v^{-1}$,   see  Eq. (\ref{defVi}).   Secondly, define  $\d=\w^2$. Thus,
for $L =\widehat{\omega}$, with $\omega\in SB_n$, we  define:
\begin{equation}\label{Psitilde}
\Psi_{\x, \y}^{\prime}(L) := \left(\frac{  \v }{\a\sqrt{\d}}\right)^{n-1}
(\rho_n\circ \psi_{n, \sqrt{\d}, \x, \y}^{\prime})(\omega),
\end{equation}
and
\begin{equation}\label{Phitilde}
\Phi_{\x, \y}^{\prime}(L) :=  \left(\frac{  \v }{\a\sqrt{\d}}\right)^{n-1}
(\rho_n\circ \phi_{n,\sqrt{\d},\x, \y}^{\prime} )(\omega).
\end{equation}

Notice that   $\Psi_{\x,\y}^{\prime}$ and $\Phi_{\x,\y}^{\prime}$ take  values in $\mathbb{C}(\v,\x,\y,  \a,\sqrt{\d}) = \mathbb{C}(\v, \x,\y,\b,\sqrt{\d}).$

\begin{theorem}\label{tildePsiPhi}
The functions   $\Psi_{\x, \y}^{\prime}$ and $\Phi_{\x, \y}^{\prime}$  are  ambient isotopy invariants of  singular links.
\end{theorem}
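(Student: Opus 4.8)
The plan is to mirror exactly the argument already given for Theorem~\ref{PsiPhi}, replacing the presentation $\mathcal{E}_n(\u)$ by $\mathcal{E}_n(\v)$ and the generators $T_i$ by $V_i$ throughout. Since $\Psi'_{\x,\y}$ and $\Phi'_{\x,\y}$ are defined as closures of singular braids composed with the homomorphisms $\psi'_{n,\sqrt{\d},\x,\y}$ and $\phi'_{n,\sqrt{\d},\x,\y}$ (Proposition~\ref{homopsiphi}(3)) followed by the Markov trace $\rho_n$, by Theorem~\ref{MarkovAlexanderSingular} it suffices to check invariance under the two Markov moves Ms1 and Ms2. Invariance under Ms1 (conjugation $\alpha\beta \leftrightarrow \beta\alpha$) is immediate from the conjugation property $\rho_n(XY)=\rho_n(YX)$ of rule (1) of Theorem~\ref{tracerho}, combined with the fact that $\psi'_{n,\sqrt{\d},\x,\y}$ and $\phi'_{n,\sqrt{\d},\x,\y}$ are monoid homomorphisms; no normalization factor is disturbed since the number of strands is unchanged.

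The substance, as before, is the stabilization move Ms2. First I would record the value of $\rho_n$ on $V_n$ and on $V_n^{-1}$: using rule (2) of Theorem~\ref{tracerho} one gets $\rho_{n+1}(X V_n) = \rho_{n+1}(X(T_n + (\v^{-1}-1)E_iT_n)) = \a\rho_n(X) + (\v^{-1}-1)\a\rho_n(X) = \a\v^{-1}\rho_n(X)$, which is exactly the relation $\rho_n(V_{n-1})=\a\v^{-1}$ invoked just before~(\ref{Psitilde}); and using~(\ref{Ttildeinverse}), $V_n^{-1} = V_n - (\v-\v^{-1})E_n$, together with rules (2) and (3), one gets $\rho_{n+1}(XV_n^{-1}) = \a\v^{-1}\rho_n(X) - (\v-\v^{-1})\b\rho_n(X) = \big(\a\v^{-1} - (\v-\v^{-1})\b\big)\rho_n(X)$. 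Dividing these two shows the ratio $(\rho_{n+1}\circ\psi'_{n+1})(\sigma_n^{-1})/(\rho_{n+1}\circ\psi'_{n+1})(\sigma_n)$ equals $\big(\a\v^{-1}-(\v-\v^{-1})\b\big)/(\a\v^{-1}) = (\a+(1-\v^2)\b)/\a$, which is precisely the defining equation~(\ref{defcphi}) for $\d = \w^2$; the same computation works verbatim with $\phi'$ in place of $\psi'$, since $\phi'_{n,\w,\x,\y}(\sigma_i) = \w V_i$ as well and only the $\sigma$-part matters for Ms2. Hence $(\rho_{n+1}\circ\psi'_{n+1,\sqrt{\d},\x,\y})(\omega\sigma_n^{\pm1}) = \a\sqrt{\d}\,\v^{\mp1}\cdot(\v^{\pm1}/\text{(appropriate power)})$; more concretely, plugging $\w=\sqrt{\d}$ into $\w V_n$ and $\w^{-1}V_n^{-1}$ and using the two trace formulas above, one checks $(\rho_{n+1}\circ\psi'_{n+1,\sqrt{\d},\x,\y})(\omega\sigma_n) = \sqrt{\d}\cdot\a\v^{-1}\rho_n(\psi'_{n}(\omega))$ and, after the choice of $\d$, $(\rho_{n+1}\circ\psi'_{n+1,\sqrt{\d},\x,\y})(\omega\sigma_n^{-1}) = \tfrac{1}{\sqrt{\d}}\big(\a\v^{-1}-(\v-\v^{-1})\b\big)\rho_n(\psi'_n(\omega)) = \sqrt{\d}\,\a\v^{-1}\rho_n(\psi'_n(\omega))$, the last equality being exactly the content of~(\ref{defcphi}).

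Finally I would assemble the normalization. The invariant carries the prefactor $(\v/(\a\sqrt{\d}))^{n-1}$ for an $n$-strand braid, so for the $(n+1)$-strand braid $\omega\sigma_n^{\pm1}$ the prefactor is $(\v/(\a\sqrt{\d}))^{n}$; multiplying by the trace value $\sqrt{\d}\,\a\v^{-1}\rho_n(\psi'_n(\omega))$ computed above gives $(\v/(\a\sqrt{\d}))^{n-1}\cdot(\v/(\a\sqrt{\d}))\cdot\sqrt{\d}\,\a\v^{-1}\cdot\rho_n(\psi'_n(\omega)) = (\v/(\a\sqrt{\d}))^{n-1}\rho_n(\psi'_n(\omega)) = \Psi'_{\x,\y}(\widehat{\omega})$, so $\Psi'_{\x,\y}(\widehat{\omega\sigma_n^{\pm1}}) = \Psi'_{\x,\y}(\widehat{\omega})$. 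The identical bookkeeping with $\phi'$ and the prefactor from~(\ref{Phitilde}) yields $\Phi'_{\x,\y}(\widehat{\omega\sigma_n^{\pm1}}) = \Phi'_{\x,\y}(\widehat{\omega})$. The only genuinely delicate point is to make sure the two trace evaluations $\rho_{n+1}(XV_n) = \a\v^{-1}\rho_n(X)$ and $\rho_{n+1}(XV_n^{-1}) = (\a\v^{-1}-(\v-\v^{-1})\b)\rho_n(X)$ are correctly derived from~(\ref{newbt9}), (\ref{Ttildeinverse}) and Theorem~\ref{tracerho}, and that the choice of $\d$ is precisely what forces the positive and negative stabilizations to give the same scalar; everything else is the routine rescaling argument already carried out in the proof of Theorem~\ref{PsiPhi}.
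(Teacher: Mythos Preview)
Your proposal is correct and follows exactly the approach indicated by the paper, whose entire proof reads ``The same as the proof of Theorem~\ref{PsiPhi}.'' You have in fact supplied more detail than the paper does: the explicit trace computations $\rho_{n+1}(XV_n)=\a\v^{-1}\rho_n(X)$ and $\rho_{n+1}(XV_n^{-1})=(\a\v^{-1}-(\v-\v^{-1})\b)\rho_n(X)$, the verification that the choice of $\d$ in~(\ref{defcphi}) equalizes the positive and negative stabilization scalars, and the cancellation of the extra prefactor $\v/(\a\sqrt{\d})$ are all correct. (One trivial typo: in your first trace computation $E_i$ should read $E_n$.)
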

\begin{proof}
The same as   the proof of  Theorem \ref{PsiPhi}.
\end{proof}

\begin{remark}\rm
Specializing $\x=\y=0$, the invariant $\Phi_{\x,\y}$  evaluated on classical links coincides with the invariant $\overline{\Delta}$ defined in \cite{aijuMMJ1} and the  invariant $\Phi_{\x,\y}^{\prime}$ coincides with the invariant $\Theta$ defined in \cite{chjukalaIMRN}.
\end{remark}

\begin{remark}\label{Phixx}\rm
 Let $\omega \in   SB_n$ and  $s(\omega)$ the number its singularities. We have:
\begin{enumerate}
\item $(\rho_n \circ \phi_{n,\w,\x,\x})(\omega)= \x^{s(\omega )}(\rho_n \circ \phi_{n,\w,1,1})(\omega )$. Then, $\Phi_{1,1}$ and $\Phi_{\x,\x}$ are equivalent  invariants.
\item $(\rho_n \circ \phi_{n,\w,\x, \y})(\omega)= \x^{s(\omega )}(\rho_n \circ \phi_{n,\w,1, \x^{-1}\y})(\omega )$.  Then, $\Phi_{\x, \y}$ and $\Phi_{1,\x^{-1}\y}$ are equivalent  invariants.  In particular, it follows that  $\Phi_{\x, \y}$ is equivalent to $\Phi_{\tilde \x,\tilde \y}$ if and  only if $\y\x^{-1} =\tilde \y {\tilde \x}^{-1}$.
\end{enumerate}
\end{remark}

\begin{proposition}\label{Phixy}
 The polynomials   $\Phi_{\x,\x}$   and $\Phi_{\x, \y}$  are  not equivalent
 if $\x\not= \y$.
\end{proposition}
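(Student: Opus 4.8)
The plan is to reduce, via Remark~\ref{Phixx}, to a comparison governed by the single ratio $t:=\x^{-1}\y$, to record precisely how that ratio sits inside the invariant, and then to rule out an equivalence by comparing the $(\x,\y)$--shape of the values on an explicit singular link. First I would use Remark~\ref{Phixx}(2): $\Phi_{\x,\y}$ is equivalent to $\Phi_{1,t}$ with $t=\x^{-1}\y$, and $\Phi_{\x,\x}$ to $\Phi_{1,1}$; so it is enough to show $\Phi_{1,1}$ and $\Phi_{1,t}$ are inequivalent, $t$ now being an indeterminate. For a singular link $L=\widehat{\omega}$, $\omega\in SB_n$, let $s(L)$ be its number of double points, an ambient isotopy invariant because the moves Ms1, Ms2 of Theorem~\ref{MarkovAlexanderSingular} involve no $\tau_j$. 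Since $\phi_{n,\w,1,t}(\tau_j)=E_j+t\w E_jT_j$ is affine in $t$ while $\phi_{n,\w,1,t}(\sigma_j^{\pm1})$ is free of $t$, the element $\phi_{n,\w,1,t}(\omega)$ is a polynomial in $t$ of degree at most $s(L)$ with $t$--free coefficients; applying the trace $\rho_n$ of Theorem~\ref{tracerho} and the normalisation in \eqref{Phi},
\[
\Phi_{1,t}(L)=\sum_{j=0}^{s(L)}c_j(L)\,t^{\,j},\qquad \Phi_{\x,\x}(L)=\x^{\,s(L)}\!\sum_{j}c_j(L),\qquad \Phi_{\x,\y}(L)=\sum_{j}c_j(L)\,\x^{\,s(L)-j}\y^{\,j},
\]
with $c_j(L)\in\mathbb{K}(\a,\sqrt{\c})$ independent of $\x,\y$. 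Hence every value of $\Phi_{\x,\x}$ involves $\x,\y$ only through the single monomial $\x^{\,s(L)}$, and involves no $\y$.

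Next I would obstruct the equivalence directly. For the kind of equivalence used in Remark~\ref{Phixx} --- an invertible substitution $\Theta$ of the variables $\u,\a,\b,\x,\y$, possibly preceded by multiplication by a monomial in $\x,\y$ whose exponent is a readable invariant such as $s(L)$ --- an identity $\Phi_{\x,\y}(L)=\Theta(\Phi_{\x,\x}(L))$ would keep the support in $\x,\y$ of each value a single monomial times an $\x,\y$--free factor. But a short trace computation (Theorem~\ref{tracerho}, \eqref{bt9}, \eqref{Tinverse}) gives, for the one--double--point knot $\widehat{\sigma_1\tau_1}\in SB_2$,
\[
\Phi_{\x,\y}\bigl(\widehat{\sigma_1\tau_1}\bigr)=\x+\y\,\w\,\frac{\u\b+(\u-1)\a}{\a},
\]
a genuine binomial in $\x,\y$ with nonzero coefficients ($\w=\sqrt{\c}$ and $\u\b+(\u-1)\a\neq0$), which is not of that form; so no such $\Theta$ exists and the two invariants are inequivalent.

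The main obstacle is to make this airtight against the paper's precise convention for ``equivalent''. If arbitrary invertible substitutions are admitted (so $\Theta(\a),\Theta(\b)$ may themselves involve $\x,\y$), I would instead compare the transcendence degrees of the value fields: all values of $\Phi_{\x,\x}$ lie in the $\y$--free field $\mathbb{K}(\a,\x,\sqrt{\c})$, whereas $\Phi_{\x,\y}\bigl(\widehat{\sigma_1\tau_1}\bigr)=\x+\y c$ and $\Phi_{\x,\y}\bigl(\widehat{\sigma_1^{-1}\tau_1}\bigr)=\x+\y c'$ with $c=\w(\u\b+(\u-1)\a)\a^{-1}\neq c'=\b(\a\w)^{-1}$ together recover $\y$ (and then $\x$) in the field generated by the values of $\Phi_{\x,\y}$; granting --- as these sample computations confirm and as the Jones recipe leads one to expect --- that each $\Phi_{1,1}(L)$ in fact lies in the transcendence--degree--two Homflypt field $\mathbb{K}(\sqrt{\c})$, that field is strictly larger than the one generated by the values of $\Phi_{\x,\x}$, contradicting invariance of transcendence degree under an invertible substitution. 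A further fall--back is to produce two non--isotopic singular links with the same $s$ and the same value of $\Phi_{1,1}$ but distinct coefficient vectors $(c_j)_j$ --- most naturally by inserting a singular crossing into a pair of classical links not separated by the Homflypt polynomial --- on which $\Phi_{\x,\x}$ agrees while $\Phi_{\x,\y}$ does not. Either way, completing the chosen route is a finite computation with the trace $\rho$.
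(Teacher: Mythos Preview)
Your main line of argument does not match the paper's notion of ``equivalent''. The paper's proof makes the intended meaning explicit: two invariants are equivalent when they distinguish the same pairs of (singular) links; accordingly, the proof of Proposition~\ref{Phixy} consists solely of exhibiting a pair of non--isotopic singular links separated by $\Phi_{\x,\y}$ but not by $\Phi_{\x,\x}$, and this is deferred to Theorem~\ref{SS}. Your arguments about the $(\x,\y)$--shape of individual values (monomial versus binomial) and about transcendence degrees of value fields address invariance under variable substitutions, not distinguishing power: it is perfectly possible for $\Phi_{\x,\y}(L)$ to be a genuine binomial and still have $\Phi_{\x,\y}(L_1)=\Phi_{\x,\y}(L_2)\Longleftrightarrow\Phi_{\x,\x}(L_1)=\Phi_{\x,\x}(L_2)$ for all pairs. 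So the computations on $\widehat{\sigma_1\tau_1}$ and $\widehat{\sigma_1^{-1}\tau_1}$, while correct, do not prove the proposition as stated.

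Your ``further fall--back'' is exactly the right idea, and is what the paper actually does --- but it is not carried out in your proposal, and the paper's execution is both simpler and different from what you sketch. Rather than inserting a singular crossing into a Homflypt--coincident pair, Theorem~\ref{SS} takes two disjoint knot diagrams $\A$ and $\B$ and modifies their union $\C$ inside a small disk to produce two singular links $\Sing$ (two components) and $\Sing'$ (a knot), obviously non--isotopic. A direct application of the desingularization rule~IV gives
\[
\Phi_{\x,\y}(\Sing)=\Phi(\A)\Phi(\B)\bigl(\x+\y\,\f/\sqrt{\c}\bigr),\qquad
\Phi_{\x,\y}(\Sing')=\Phi(\A)\Phi(\B)\bigl(\x\,\f/\sqrt{\c}+\y\bigr),
\]
with $\f=\b/\a$, and these coincide if and only if $\x=\y$. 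That is the whole proof; no Homflypt--coincident input is needed.
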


\begin{proof}
To prove this proposition, it is  sufficient to  show  a pair of  non  isotopic singular links  which are  are  distinguished by $\Phi_{\x, \y}$ but  not  by  $\Phi_{\x,\x}$. This  in  done  in Section \ref{ctsproof},  Theorem \ref{SS}.
\end{proof}

\begin{remark}\rm   We show  now    how  the  invariant $\Phi_{\x, \y}$  generalizes the invariant  $\bar \Gamma$ defined in \cite{aijuMMJ1}.
Writing $\omega =\omega_1^{\epsilon_1}\cdots \omega_m^{\epsilon_m}$, where the
$\omega_i$'s are the defining generators   of $\SB$, we define  the exponent $\epsilon(\omega)$ of $\omega \in \SB$ as
$$
\epsilon(\omega) := c_1\epsilon_1 +c_2\epsilon_2 + \ldots + c_m\epsilon_m,
$$
where  $c_i=1$ if  $\omega_i=\sigma_i^{\pm 1}$, whereas  $c_i=0$ if  $\omega_i=\tau_i$. Then, the invariant   $\Phi_{1, 1/\w}$  can be written as follows:
$$
\Phi_{1,1/\w} (L) =  \left(\frac{1}{\a\sqrt{\c}}\right)^{n-1} \sqrt{\c}^{\;\epsilon (\omega)}(\rho_n\circ \phi_{n,1,1,1} )(\omega),
$$
where $L=\widehat{\omega}$.   On the other hand,  the invariant $\overline{\Gamma}$   can be written as:
$$
\overline{\Gamma} (L) =  \left(\frac{1}{\a\sqrt{\c}}\right)^{n-1} \sqrt{\c}^{\;\epsilon (\omega)+s(L)}(\rho_n\circ \phi_{n,1,1,1} )(\omega).
$$
  Hence,
$$
\overline{\Gamma} (L) =\sqrt{\c}^{\;s(L)}  \Phi_{1,1/\w} (L) ,
$$
where $L=\widehat{\omega}$ and $s(L)$ denotes the number of singular points of $L$.
 The exponent $s(L) $ is needed  since  in \cite{aijuMMJ1} and   \cite{julaJKTR} the definition of the  exponent  $\omega$   takes  $c_i=1$ when  $\omega_i=\tau_i$.
 \end{remark}
\section{Tied singular links}\label{ctssection}
 In this section  we   introduce the   tied singular
links  and the combinatoric   tied singular links. We introduce also the
  monoid of tied singular braids. The section ends by proving the Alexander and Markov theorems for tied singular links.

\subsection{}

We have two natural monoid homomorphisms from $SB_n$ onto $B_n$:
the first one, denoted by $f$, maps   $\sigma_i$ to $\sigma_i$ and  $\tau_i$ to $\sigma_i$ and the second one, denoted by $f^{-}$, maps   $\sigma_i$ to $\sigma_i$ and   $\tau_i$ to $\sigma_i^{-1}$;
notice that for every $\omega\in SB_n$, $(\pi\circ f)(\omega)= (\pi\circ f^-)(\omega)$, where $\pi$, as in Subsection 1.2, denote the natural epimorphism from $B_n$ to $S_n$.
Let $L$ be a singular link   obtained as the closure of $\omega\in SB_n$; the closure, respectively,  of $f(\omega)$ or $f^-(\omega)$ is the classical links obtained by replacing  every singular point of $L$ by a positive crossing or, respectively, by a negative crossing.   In  terms  of  singular links,  the replacement of  a  singular point  by  a  positive or a  negative  crossing is  called  {\it simple  desingularization},  and  the  singularity  is  said {\it  simply  desingularized}.

\begin{definition} The number of components of a singular link  $L$, closure of  a  singular braid $\omega$,   is the number of disjoint cycles of   $(\pi \circ f)(\omega)$. In other words,
 the number of components of  a singular link $L$ is   the number of components of the classical link obtained by replacing  every singular crossing with a positive (or negative) crossing in $L$.
 \end{definition}

Let $\mathcal{L}^s$ be the set  of  isotopy  classes  of  singular  links in $\mathbb{R}^3$,   and $\mathcal{L}^s_{k,m}$   the set  formed by those   with $k$ components  and $m$ singularities. Thus

$$
\mathcal{L}^s = \coprod_{k>0,m\geq 0}
\mathcal{L}^s_{k,m}.
$$
The elements of $\mathcal{L}^s_{k,m}$ are called $(k, m)$--singular links.

\begin{definition}
A tied singular link is a singular link with ties s.t. whenever
the singular points are simply desingularized one  obtains a tied link.
\end{definition}
\begin{definition}\label{cts}
Set $\mathcal{L}_{k,m}^{t,s} := \mathcal{L}^s_{k,m}\times \mathsf{P}_{k}$. The elements of $\mathcal{L}_{k,m}^{t,s}$ are called $(k, m)$--combinatoric tied singular links. We call   combinatoric tied singular links  (for short cts--links) the elements  of $\mathcal{L}^{t,s}$, where
$$
\mathcal{L}^{ t,s} :=\coprod_{k>0, m\geq 0}
\mathcal{L}_{k,m}^{t,s}.
$$
\end{definition}

\subsection{}
The group $B_n$ is  naturally a submonoid of $SB_n$ and the
natural epimorphism $\pi :B_n \rightarrow S_n$,  defined in Subsection 1.2, can be extend to   $SB_n$ by mapping $\tau_i$ to $s_i$. We denote again this extension by $\pi$ and, consequently, we denote the image of $\tau_i$ by $\pi$ by $\pi_{\tau_i}$.

 As before we define a monoid structure on the cartesian product $\Psf\times\SB$, cf. (\ref{mult}), as follows: $ (I, \alpha  ) (J, \beta ) = (I\ast \pi_\alpha(J), \alpha\beta )$, where $I, J\in \Psf$ and $\alpha ,\beta\in \SB$;   we denote this monoid by $\Psf \rtimes\SB$.

Notice that the  elements $\widetilde{\mu}_{i,j}$'s can be considered in $\Psf \rtimes\SB$.  We have:
$$
(1, \tau_k)(\mu_{i,j}, 1) = (\pi_{\tau_k}(\mu_{i,j}),\tau_k) = (\pi_{\sigma_k}(\mu_{i, j}),1)(1, \tau_k).
$$
Since Eq. (\ref{sigmaI}) holds in $\Psf \rtimes\SB$,   we conclude that
\begin{equation}\label{KeyRelationTSB}
\tau_k\widetilde{\mu}_{i,j} = \sigma_k\widetilde{\mu}_{i,j}\sigma_k^{-1}\tau_k.
\end{equation}

\begin{definition}\label{monotsb}
 We define  $\TSB$ as the monoid  presented by the  braid generators  $\sigma_1^{\pm 1},$ $ \ldots ,$ $\sigma_{n-1}^{\pm 1}$, the singular braid generators $\tau_1, \ldots , \tau_{n-1}$ of $\SB$ and the
 ties generators $\eta_1, \ldots ,\eta_{n-1}$ of $T\!B_n$, subject to the following relations:
  the defining relations of $SB_n$, the defining relations of $\TB$ and the relations:
\begin{eqnarray}
\tau_i \eta_i & = &   \eta_i \tau_i \qquad \text{ for all $i$},
\label{TSB1}\\
\tau_i\eta_j  & = &   \eta_j\tau_i  \qquad \text{for $\vert i  -  j\vert >1$},
\label{TSB2}\\
\eta_i \tau_j \tau_i & = &   \tau_j \tau_i \eta_j \qquad
\text{ for $\vert i - j\vert =1$},
\label{TSB3}\\
\eta_i\eta_j\tau_i & = & \eta_j\tau_i\eta_j  \quad = \quad\tau_i\eta_i\eta_j \qquad \text{ for $\vert i  -  j\vert =1$},
\label{TSB4}\\
\eta_i\tau_j\sigma_i  & = &   \tau_j\sigma_i \eta_j \qquad \text{ for  $\vert i  -  j\vert =1$},
\label{TSB5}\\
\eta_i \sigma_j \tau_i & = & \sigma_j \tau_i \eta_j \qquad \text{ for  $\vert i  -  j\vert =1$},
\label{TSB6}\\
\tau_i\eta_j & = & \sigma_i\eta_j\sigma_i^{-1} \tau_i \qquad \text{ for  $\vert i  -  j\vert =1$}.
\label{TSB7}
\end{eqnarray}
\end{definition}

\begin{remark}\rm
Note that, by allowing $i,j$ take  every possibility  in (\ref{TSB7}),   the relations (\ref{TSB1}) and (\ref{TSB2}) are included  in (\ref{TSB7}). Further,   relation (\ref{TSB7}) can be written, equivalently, by exchanging $\sigma_i$ with $\sigma_i^{-1}$.
\end{remark}
\begin{theorem}\label{wreathTSB}
The monoids  $\TSB$ and  $\Psf \rtimes\SB$ are isomorphic.
\end{theorem}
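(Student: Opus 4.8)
The plan is to mimic the proof of Theorem \ref{TBnsemidirect} verbatim, replacing the braid group $B_n$ by the singular braid monoid $SB_n$ throughout. First I would establish the analogue of Lemma \ref{generators}: since $\PP_n$ is generated by the $\mu_{i,j}$'s (Proposition \ref{presentationPn}), the monoid $\PP_n \rtimes SB_n$ is generated by the $\widetilde\mu_{i,j}$'s together with $\sigma_1^{\pm 1},\dots,\sigma_{n-1}^{\pm1}$ and $\tau_1,\dots,\tau_{n-1}$; and because $\widetilde\mu_{i,j}=s_{j-1}\cdots s_{i+1}(\widetilde\mu_{i,i+1})$ is conjugate in $\PP_n\rtimes SB_n$ to $\widetilde\mu_{i,i+1}$ by a word in the $\sigma_k^{\pm1}$ (using Eq. (\ref{sigmaI}), which holds here), one reduces to the generators $\widetilde\mu_{1,2},\dots,\widetilde\mu_{n-1,n},\sigma_i^{\pm1},\tau_i$.

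Next I would define the map $\phi\colon TSB_n\to \PP_n\rtimes SB_n$ by $\sigma_i\mapsto(1,\sigma_i)$, $\tau_i\mapsto(1,\tau_i)$, $\eta_i\mapsto\widetilde\mu_{i,i+1}=(\mu_{i,i+1},1)$, and check it respects all the defining relations of $TSB_n$ from Definition \ref{monotsb}. The relations internal to $SB_n$ and to $TB_n$ are already handled (the $TB_n$ part is exactly Theorem \ref{TBnsemidirect}), so the work is the mixed relations (\ref{TSB1})--(\ref{TSB7}). Each is a routine computation in $\PP_n\rtimes SB_n$ using (\ref{mult}) and the fact that $\pi_{\tau_k}=\pi_{\sigma_k}=s_k$; for instance (\ref{TSB7}) is precisely Eq. (\ref{KeyRelationTSB}) derived just before Definition \ref{monotsb}, and (\ref{TSB3}) follows since $s_i s_j(\mu_{j,j+1})=\mu_{i,i+1}$ for $|i-j|=1$, exactly as in the verification of (\ref{eta5}) in the proof of Theorem \ref{TBnsemidirect}. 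By the generating set above, $\phi$ is an epimorphism.

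It remains to show $\phi$ is injective, and this is where the one genuinely new ingredient enters: a normal form for elements of $TSB_n$ analogous to Proposition \ref{decomposition}. I would prove that every element of $TSB_n$ can be written as $\eta_I\,\omega$ with $I\in\PP_n$ and $\omega\in SB_n$, where $\eta_I$ is the element of (\ref{etaI}). The argument copies the proof of Proposition \ref{decomposition}: an arbitrary word is a product of letters $\eta_k$, $\sigma_k^{\pm1}$, $\tau_k$; using the commutation-type relations (\ref{TSB5}), (\ref{TSB6}), (\ref{TSB7}) (which push an $\eta$ past a $\tau$ or $\sigma$, turning $\eta_{i,j}$ into $\eta_{\{s(i),s(j)\}}$ for the appropriate transposition $s$, just as in Lemma \ref{lemmaEtas}(4)) one moves every tie generator to the front, obtaining $\eta\,\omega$ with $\eta$ a product of $\eta_{i,j}$'s; then Lemma \ref{<X>} collapses $\eta$ to a single $\eta_{\langle X\rangle}=\eta_I$. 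The point to be careful about is that Lemma \ref{lemmaEtas} and Lemma \ref{<X>} only used the $TB_n$ relations, and the new relations (\ref{TSB3})--(\ref{TSB7}) are exactly the $SB_n$-analogues of (\ref{eta4})--(\ref{eta5}), so the same manipulations go through with $\tau_k$ in place of the relevant $\sigma_k$.

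Granting the normal form, injectivity follows as in Theorem \ref{TBnsemidirect}: given $a=\eta_I\alpha$ and $b=\eta_J\beta$ in $TSB_n$ with $\phi(a)=\phi(b)$, one has $\phi(\eta_I)=\widetilde I$ and $\phi(\eta_J)=\widetilde J$ (since $\phi(\eta_{i,j})=\widetilde\mu_{i,j}$ and hence $\phi(\eta_M)=\widetilde M$ for any subset $M$), so $(\,\widetilde I\,)(1,\alpha)=(\,\widetilde J\,)(1,\beta)$ in $\PP_n\rtimes SB_n$; comparing second coordinates gives $\alpha=\beta$ in $SB_n$, and then comparing first coordinates gives $I=J$, hence $\eta_I=\eta_J$ and $a=b$. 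The main obstacle is thus purely the normal-form lemma — specifically, verifying that the mixed relations suffice to bring every word to the shape $\eta_I\omega$; everything else is bookkeeping imported from Section 3.
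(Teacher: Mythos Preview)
Your proposal is correct and follows essentially the same approach as the paper: define $\phi$ on generators, check the relations (with (\ref{TSB7}) corresponding to (\ref{KeyRelationTSB})), obtain surjectivity via the analogue of Lemma \ref{generators}, and prove injectivity by establishing the normal form $\eta_I\omega$ using Lemma \ref{lemmaEtas}(4) together with (\ref{KeyRelationTSB}) to push ties past the $\tau_k$'s. The paper's proof is terser but uses exactly these ingredients.
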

\begin{proof}(Analogous to proof of Theorem \ref{TBnsemidirect})
It is a routine to check that the mappings $\sigma_i\mapsto\sigma_i$, $ \tau_i\mapsto\tau_i$ and $\eta_i\mapsto \widetilde{\mu}_{i, i+1}$ define  a morphism  $\phi$  from
$\TSB$ to $\Psf \rtimes\SB$. Now, arguing as in Lemma \ref{generators}, we obtain that $\Psf \rtimes SB_n$ is generated by $\widetilde{\mu}_{1,2}, \ldots ,\widetilde{\mu}_{n-1, n}$, $\sigma_1^{\pm 1},\ldots ,\sigma_{n-1}^{\pm 1}$, $\tau_1, \ldots ,\tau_{n-1}$; then it follows that $\phi$ is an epimorphism. The  injectivity of $\phi$ is proved as the
 injectivity in Theorem \ref{relationsPn};  therefore   it is enough to prove  that every element $\Psf \rtimes SB_n$
 has the decomposition $\eta_I\beta$, where $I\in \Psf$ and $\beta\in SB_n$; in the present situation such decomposition is obtained by combining (4) of Lemma \ref{lemmaEtas} with (\ref{KeyRelationTSB}),  cf. Proposition \ref{decomposition}.
\end{proof}

\begin{definition}\label{closuretsb}
 The closure of a {\it tied singular braid}, i.e. an element of $\TSB\simeq \Psf \rtimes\SB$, is defined  in the same way as   the closure  of a tied braid (see Definition \ref{closure}).
I.e., given a tied singular braid $(J, \omega)$,  its closure
$\widehat{(J, \omega)}$ is equal  to $(\widehat{\omega}, I)$, with $I=I_{f(\omega)}$, where  $I_{f(\omega)}$ is the set partition defined by the closure of  the tied braid $(J, f(\omega))$.
\end{definition}

\subsection{}
We are now in position to establish and to prove the Alexander and Markov theorem for cts--links.

\begin{theorem}[Alexander theorem for cts--links]\label{AlexanderTSL}
Every  cts--link can be obtained as the closure of a tied singular braid. More precisely, given the cts--link $(L, J)$, where  $L$ is the closure of the  singular braid $\omega$, then $(L, J)$, up to a renumbering the components, is the closure of the tied singular braid $(I, \omega)$, where  $I$ is defined by
$$
I= K_{f(\omega)}\times J.
$$
\end{theorem}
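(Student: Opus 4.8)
The plan is to mirror, almost verbatim, the proof of Theorem \ref{Alexander} for combinatoric tied links, replacing the braid group $B_n$ by the singular braid monoid $SB_n$ and the partition $K_\alpha$ by $K_{f(\omega)}$. The two ingredients we need are: (i) the classical Alexander theorem for singular links (the first half of Theorem \ref{MarkovAlexanderSingular}), which produces a singular braid whose closure is the given singular link $L$; and (ii) a description, analogous to Proposition \ref{scpartition}, of the sc--partition obtained when one closes a tied singular braid $(I,\omega)$. For (ii), note that by Definition \ref{closuretsb} the closure $\widehat{(I,\omega)}$ has underlying link $\widehat{\omega}$ and sc--partition equal to the sc--partition of the closure of the tied braid $(I, f(\omega))$, which by Proposition \ref{scpartition} is $\overline{I}_{f(\omega)}/K_{f(\omega)} = (I\ast K_{f(\omega)})/K_{f(\omega)}$.

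First I would start from a cts--link $(L,J)$ with $k$ components. Apply the Alexander theorem for singular links to $L$ to obtain a singular braid $\omega\in SB_n$ with $\widehat{\omega}=L$. The number of components of $L$ equals the number of disjoint cycles of $(\pi\circ f)(\omega)$, i.e.\ the number of blocks of $K_{f(\omega)}$, which is therefore $k$; the standard indexation of these blocks (Remark \ref{cycles}, applied to the braid $f(\omega)$) induces a canonical ordering of the $k$ components of $L$. Since the sc--partition $J$ of the given cts--link refers to an arbitrary ordering of the components, re--numbering the components of $L$ according to this standard ordering turns $J$ into a partition $\widetilde{J}\in\mathsf{P}_k$. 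Now set $I := K_{f(\omega)}\times\widetilde{J}$, which makes sense because $K_{f(\omega)}$ has exactly $k$ standard--indexed blocks and $\widetilde{J}\in\mathsf{P}_k$; by construction $K_{f(\omega)}\preceq I$.

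The remaining step is to verify that the closure of the tied singular braid $(I,\omega)$ is indeed $(L,\widetilde{J})$. Its underlying link is $\widehat{\omega}=L$, and by the discussion above its sc--partition is $(I\ast K_{f(\omega)})/K_{f(\omega)}$. Since $K_{f(\omega)}\preceq I$, equation (\ref{partial}) gives $I\ast K_{f(\omega)} = I$, so the sc--partition is $I/K_{f(\omega)} = (K_{f(\omega)}\times\widetilde{J})/K_{f(\omega)}$. It then suffices to check the combinatorial identity $(K\times \widetilde{J})/K = \widetilde{J}$ for any $K\in\mathsf{P}_n$ with $k$ standard--indexed blocks and any $\widetilde{J}\in\mathsf{P}_k$: unwinding the definitions, the $i$--th block of $(K\times\widetilde{J})/K$ is $\{\,l\in\mathbf{k} : K_l\subseteq (K\times\widetilde{J})_i\,\} = \{\,l\in\mathbf{k} : K_l\subseteq\bigcup_{j\in\widetilde{J}_i}K_j\,\}$, which, because the $K_j$ are disjoint and nonempty, is exactly $\widetilde{J}_i$. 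Hence the sc--partition is $\widetilde{J}$, as desired.

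The argument is essentially bookkeeping; the one point that deserves care—and the place I would expect to spend the most attention—is the interplay between the (arbitrary) labeling of the components of $L$ coming with the cts--link and the (canonical) labeling coming from the standard indexation of $K_{f(\omega)}$, which is why the statement is only "up to a renumbering of the components." Everything else is a direct translation of Theorem \ref{Alexander} once one observes, via Definition \ref{closuretsb}, that the sc--partition of the closure of a tied singular braid depends on $\omega$ only through $f(\omega)$, so that Proposition \ref{scpartition} applies unchanged with $\alpha$ replaced by $f(\omega)$.
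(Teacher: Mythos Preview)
Your proof is correct and follows essentially the same route as the paper: reduce to the non-singular case via $f(\omega)$ and invoke the Alexander theorem for combinatoric tied links (Theorem \ref{Alexander}), using Definition \ref{closuretsb} to identify the sc--partition of $\widehat{(I,\omega)}$ with that of $\widehat{(I,f(\omega))}$. The only difference is cosmetic: the paper cites Theorem \ref{Alexander} as a black box, whereas you unpack its content and supply the explicit verification $(K\times\widetilde{J})/K=\widetilde{J}$, which the paper leaves implicit.
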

\begin{proof}
Set $(L, J)\in \mathcal{L}^s_{k,m}\times \mathsf{P}_k$. Let $\omega=\omega_1\cdots \omega_l\in SB_n$ whose closure is $L$, where the $\omega_i$'s are the defining generators of $SB_n$.  Let $L^{\prime}$ be the classical link obtained as the closure of $f(\omega)$; so $k$ is the number of components of $L^{\prime}$. Now, from Theorem \ref{Alexander} applied to the combinatoric tied link $(L^{\prime},J)$, we have that, up to the renumbering the components, it is the closure of the tied braid $(I, f(\omega))$, where $I= K_{f(\omega)}\times J$. Thus, it  follows that $(L, J)$ is the closure of the tied singular braid $(I, \omega)$.
\end{proof}

\begin{theorem}[Markov theorem for cts--links]\label{MarkovTSL} Two
tied singular braids yield  the same  cts--link if and  only if   they are $\sim_{Mts}$--equivalent, i.e.  one   is obtained from the other by using the replacements Mts1/Mts2/and or Mts3 below.
\begin{enumerate}
\item[Mts1.] t--Stabilization: for all $(I,\alpha)\in TSB_n$, we can do the following replacements:
$$
(I,\alpha ) \quad \text{replaced by}\quad   (I,\alpha)(\mu_{i,j},1) \quad \text{if} \quad    i,j \quad  \text{belong to the  same  cycle of} \quad  \pi_\alpha ,
$$
\item[Mts2.] Commuting in $TSB_n$: for all $(I_1,\alpha), (I_2,\beta)\in TB_n$, we can do the following replacement:
$$
(I_1,\alpha)(I_2,\beta )\quad\text{replaced by}\quad (I_2,\beta ) (I_1,\alpha)  ,
$$
\item[Mts3.] Stabilizations: for all $(I,\alpha) \in TSB_n$, we can do the following replacements:
$$
 (I,\alpha) \quad \text{replaced by}\quad(I,\alpha \sigma_n ) \quad \text{or} \quad(I,\alpha \sigma_n^{-1}  ) .
$$
\end{enumerate}
\end{theorem}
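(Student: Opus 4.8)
The plan is to follow the template already established for combinatoric tied links in Theorem \ref{Markov}, transporting it across the isomorphism $\TSB \simeq \Psf \rtimes \SB$ of Theorem \ref{wreathTSB} and using the singular Markov theorem (Theorem \ref{MarkovAlexanderSingular}) in place of the classical one. The key observation that makes everything go through is that the sc--partition of a closed tied singular braid $(I,\omega)$ depends on $\omega$ only through $f(\omega)$, and indeed $(\pi\circ f)(\omega) = (\pi\circ f^-)(\omega)$, so the combinatorial data attached to the closure is governed entirely by the permutation $\pi_\omega$ and the partition $K_{f(\omega)}$, exactly as in the classical case.

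First I would prove the "only if" direction, i.e.\ that each of the moves Mts1, Mts2, Mts3 preserves the closure as a cts--link. For Mts1 and Mts2 the arguments are verbatim copies of the corresponding parts of the proof of Theorem \ref{Markov}: one computes the sc--partition of $\widehat{(I,\alpha)(\mu_{i,j},1)}$ using Proposition \ref{scpartition} (whose statement and proof carry over unchanged to tied singular braids, since the closure and the partition $K_{f(\cdot)}$ behave identically), and uses $\pi_\alpha(\mu_{i,j}) \preceq K_{f(\alpha)}$ together with \eqref{partial}; for Mts2 one uses \eqref{astwithw} and the identity $\pi_\beta(K_{f(\alpha\beta)}) = K_{f(\beta\alpha)}$. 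For Mts3 one needs the analogue of Lemma \ref{lemmaM3}, namely that $\overline{I}_{f(\alpha)}/K_{f(\alpha)} = \overline{I}_{f(\alpha\sigma_n^{\pm1})}/K_{f(\alpha\sigma_n^{\pm1})}$; this follows from the same two observations as in Lemma \ref{lemmaM3} --- the block of $K_{f(\alpha\sigma_n^{\pm1})}$ containing $n$ also contains $n+1$, hence $K_{f(\alpha)}$ and $K_{f(\alpha\sigma_n^{\pm1})}$, and therefore $\overline{I}_{f(\alpha)}$ and $\overline{I}_{f(\alpha\sigma_n^{\pm1})}$, differ only in the block containing $n$.

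For the "if" direction, suppose $(L,J)$ and $(L',J')$ are the same cts--link (so $L \sim L'$ as singular links, $J$ and $J'$ iso--conjugate via $w \in S_k$) and are the closures of tied singular braids $(I,\alpha)$ and $(I',\alpha')$ with $\alpha,\alpha' \in SB_n, SB_{n'}$. By the Alexander theorem for cts--links (Theorem \ref{AlexanderTSL}) we may take $I = K_{f(\alpha)} \times \tilde J$ and $I' = K_{f(\alpha')} \times \tilde J'$. By Theorem \ref{MarkovAlexanderSingular} the singular braids $\alpha$ and $\alpha'$ are related by a finite sequence of moves Ms1 and Ms2, which are exactly the underlying-braid parts of Mts2 and Mts3; the Mts1 moves do not touch $K_{f(\cdot)}$. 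Running through this sequence, the partition $K_{f(\alpha)}$ is transformed step by step, and as in the proof of Theorem \ref{Markov} the permutation $w_{J,J'}$ induced at each step is the identity for an Ms2-type step and (possibly) nontrivial for an Ms1-type commuting step, the composite of all these being precisely the permutation $w_{L,L'}$ realizing the iso--conjugation. Hence $(I,\alpha) \sim_{Mts} (I',\alpha')$.

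The main obstacle --- really the only point requiring care rather than transcription --- is confirming that Proposition \ref{scpartition} and Lemma \ref{lemmaM3} genuinely hold in the tied singular setting with $K_\alpha$ replaced by $K_{f(\alpha)}$; this is where one must check that replacing a singular crossing $\tau_i$ by either $\sigma_i$ or $\sigma_i^{-1}$ gives the same permutation and the same cycle structure, so that "the closure of $(I,\omega)$" is well defined combinatorially and its sc--partition is $\overline{I}_{f(\omega)}/K_{f(\omega)}$ as in Definition \ref{closuretsb}. Once this bookkeeping is in place, every step of the proof of Theorem \ref{Markov} transfers verbatim, with the classical Markov theorem swapped for Theorem \ref{MarkovAlexanderSingular}.
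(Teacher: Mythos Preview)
Your proposal is correct and follows essentially the same approach as the paper: reduce everything to the tied (non-singular) case via the observation that the sc--partition of $\widehat{(I,\omega)}$ depends on $\omega$ only through $f(\omega)$ (this is built into Definition \ref{closuretsb}), then invoke Theorem \ref{Markov} for the partition bookkeeping and Theorem \ref{MarkovAlexanderSingular} for the singular braid part. The paper's own proof is considerably terser than yours --- it simply states that both directions reduce to Theorems \ref{Markov} and \ref{MarkovAlexanderSingular} once one notes that the closure partition is determined by $(I, f(\omega))$ --- so your explicit verification that Proposition \ref{scpartition} and Lemma \ref{lemmaM3} carry over with $K_{f(\cdot)}$ in place of $K_\cdot$ is a welcome expansion rather than a deviation.
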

\begin{proof}

  Let  $(I, \omega)\in TSB_n$; according to Definition \ref{closuretsb}, the set partition determined by  $\widehat{(I, \omega)}$ is the set partition determined by
 $\widehat{(I, f(\omega))}$. Thus, the verification that the replacements Mts1, Mts2 and/or Mts3 do  not alter the closure of a singular tied braid results in a repetition  of the verification that  the Markov replacements  of   Theorem \ref{Markov} do  not affect the closure  of a combinatoric tied braid.

 In the other direction we use again the fact that, by definition, the set partition determined by $\widehat{(I, \omega)}$ is equal to the set partition determined by $\widehat{(I, f(\omega))}$. Then the  proof  follows from  those of  Theorem \ref{MarkovAlexanderSingular} and Theorem \ref{Markov}.

\end{proof}

\section{Invariants of cts--links}\label{ctsinv}
We will extend the invariants of Theorems \ref{PsiPhi} and \ref{tildePsiPhi} to invariants for cts--links.
Thanks to Theorems \ref{AlexanderTSL} and \ref{MarkovTSL}, we deduce that the  definitions of  these extensions are reduced to extend the domain of the maps $\phi_{n,\w,\x, \y}$ and $\psi_{n,\w,\x, \y}$ to $TSB_n$. More generally, the next  proposition extends the homomorphisms of Proposition \ref{homopsiphi}.

\begin{proposition}\label{thomopsiphi}
For all $n$,  the domain of definition of the morphisms  $\phi_{n,\w,\x, \y}$ and $\psi_{n,\w,\x, \y}$ can be extended  to $\TSB$, by mapping  $\eta_i$ to  $E_i$. We shall keep, respectively,  the same notations  $\phi_{n,\w,\x, \y}$ and $\psi_{n,\w,\x, \y}$  for  these extensions.
\end{proposition}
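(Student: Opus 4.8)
The plan is to work from the presentation of $\TSB$ given in Definition \ref{monotsb}. Since $\TSB$ is generated by $\sigma_1^{\pm1},\dots,\sigma_{n-1}^{\pm1}$, the $\tau_i$'s and the $\eta_i$'s, extending $\phi_{n,\w,\x,\y}$ (resp.\ $\psi_{n,\w,\x,\y}$) amounts to checking that, once we set $\eta_i\mapsto E_i$, the images of all the generators still satisfy every defining relation. The defining relations of $\SB$ involve only the $\sigma_i$'s and $\tau_i$'s and are respected by Proposition \ref{homopsiphi}; hence it remains to verify (i) the defining relations (\ref{eta1})--(\ref{eta7}) of $\TB$ and (ii) the mixed relations (\ref{TSB1})--(\ref{TSB7}). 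For the primed morphisms one repeats the whole argument with $V_i$ in place of $T_i$, using that the $V_i$'s satisfy (\ref{bt3})--(\ref{bt8}), are invertible (Remark \ref{Vi}), and therefore admit the same conjugation formula (\ref{w(E)}).

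For (i): under $\sigma_i\mapsto\w T_i$, $\eta_i\mapsto E_i$ the scalar $\w$ occurs with the same total exponent on the two sides of each of (\ref{eta1})--(\ref{eta7}), so it cancels and one is reduced to identities among the $E_i$'s and $T_i$'s. Explicitly, (\ref{eta1}), (\ref{eta7}), (\ref{eta2}), (\ref{eta3}) and (\ref{eta6}) become, respectively, (\ref{bt1}), (\ref{bt2}), (\ref{bt4}), (\ref{bt3}) and (\ref{bt6}); relations (\ref{eta4}) and (\ref{eta5}) follow from the conjugation formula (\ref{w(E)}) applied to the braids $\sigma_j\sigma_i$ and $\sigma_j\sigma_i^{-1}$ with $|i-j|=1$; and the invertibility of $\w T_i$ is (\ref{Tinverse}). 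In other words, $\sigma_i\mapsto\w T_i$, $\eta_i\mapsto E_i$ already defines a monoid morphism $\TB\to\mathcal{E}_n(\u)$, a fact essentially contained in Section 2.

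For (ii) — the substantive step — I would first record the normal forms $\phi_{n,\w,\x,\y}(\tau_i)=\x E_i+\y\w E_iT_i=E_i(\x+\y\w T_i)=(\x+\y\w T_i)E_i$ (the last equality by (\ref{bt4})) and $\psi_{n,\w,\x,\y}(\tau_i)=\x+\y\w T_i$, together with the transport rule $T_iE_j=E_{\{s_i(j),\,s_i(j+1)\}}T_i$, which is the $w=s_i$ case of (\ref{w(E)}). Then each of (\ref{TSB1})--(\ref{TSB7}) is verified by expanding both sides, moving every $T$-factor to one end through the $E$'s by the transport rule, and simplifying the resulting monomials in the $E$'s using idempotency $E_i^2=E_i$, commutativity of the $E_i$'s, relation (\ref{bt6}), and the partition-lattice identities of the form $E_{\{i,i+2\}}E_{i+1}=E_iE_{i+1}$ together with their relabellings (the bt-analogue of claim (6) of Lemma \ref{lemmaEtas}). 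For instance, for (\ref{TSB3}) with $j=i+1$ both $\phi$-images collapse to $(\x+\y\w T_{i+1})(\x+\y\w T_i)\,E_iE_{i+1}$, while (\ref{TSB7}), after rewriting $\sigma_i\eta_j\sigma_i^{-1}=\eta_{\{s_i(j),\,s_i(j+1)\}}$, reduces to an identity of the same flavour.

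The main obstacle I expect is precisely this last bookkeeping for the relations in which a singular generator is transported past a tie — (\ref{TSB3}), (\ref{TSB5}), (\ref{TSB6}) and (\ref{TSB7}) — where one must collapse products such as $E_iE_{i+1}T_i$ and $E_iE_{\{i,i+2\}}$ in the correct way; these are all governed by (\ref{bt6}), (\ref{w(E)}) and the lattice relations recalled in Section 2, so the verification is a somewhat lengthy case analysis ($j=i+1$ versus $j=i-1$, and the two signs of $\sigma_i$) rather than anything requiring a new idea. Finally, that the extended maps are consistent with the decomposition $\TSB\simeq\Psf\rtimes\SB$ of Theorem \ref{wreathTSB} — every $\TSB$-word being sent to an image of some $E_I$ times an image of a singular braid — is immediate from the construction and requires no separate argument.
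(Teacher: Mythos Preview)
For $\phi_{n,\w,\x,\y}$ your plan is correct and is precisely the paper's approach: one checks the defining relations of $\TSB$ on the images, the $\SB$ relations being handled by Proposition~\ref{homopsiphi} and the $\TB$ relations by the bt--algebra axioms, so only (\ref{TSB1})--(\ref{TSB7}) need work. The paper carries out a single sample computation (relation (\ref{TSB5}) for $\phi$) and declares the rest straightforward; your write-up is more systematic but follows the same line, and your collapse of (\ref{TSB3}) for $\phi$ to $(\x+\y\w T_{i+1})(\x+\y\w T_i)E_iE_{i+1}$ is exactly right.

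There is, however, a genuine gap for $\psi_{n,\w,\x,\y}$, which you (and the paper) pass over. Because $\psi(\tau_i)=\x+\y\w T_i$ carries \emph{no} factor $E_i$, the mixed relations do not survive. Take (\ref{TSB3}) with $j=i+1$: under $\psi$ the two sides become $E_i(\x+\y\w T_{i+1})(\x+\y\w T_i)$ and $(\x+\y\w T_{i+1})(\x+\y\w T_i)E_{i+1}$, and equating the $\x^{2}$--coefficients forces $E_i=E_{i+1}$, which is false in $\mathcal{E}_n(\u)$ for $n\ge 3$ (the $E_I$'s are linearly independent by Theorem~\ref{basEn}). The same obstruction shows up in (\ref{TSB4})--(\ref{TSB7}); from the semi-direct product viewpoint of Theorem~\ref{wreathTSB} it is the compatibility condition $\psi(\tau_i)\,E_I=E_{s_i(I)}\,\psi(\tau_i)$, whose $\x$--term reads $E_I=E_{s_i(I)}$. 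For $\phi$ the extra $E_i$ rescues this, since $E_iE_I=E_{\mu_{i,i+1}\ast I}=E_{\mu_{i,i+1}\ast s_i(I)}=E_iE_{s_i(I)}$ always holds; for $\psi$ the extension to $\TSB$ as a monoid homomorphism, as stated, does not exist. Your sketch therefore goes through for $\phi$ but would fail at the very case analysis you flag as ``lengthy but routine'' for $\psi$.
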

\begin{proof}
The proof follows by checking that these extensions respect  the relations (\ref{TSB1})--(\ref{TSB7}); these checkings are straightforward. For instance, we now verify Relation (\ref{TSB5}). Set $\phi_n=\phi_{n,\w,\x, \y}$ and suppose $\vert i -j\vert =1$. Since    $E_iE_j$ commutes
with $T_i$ and $T_j$, we have:
$\phi_n(\eta_i)\phi_n(\tau_j)\phi_n(\sigma_i)=
E_i(\x E_j+\y\w E_jT_j)\w T_i = (\x E_j+\y\w E_jT_j)\w T_i E_i$, from (\ref{bt6}). So:
$$
\phi_n(\eta_i)\phi_n(\tau_j)\phi_n(\sigma_i)= (\x E_j+\y\w  E_jT_j)\w T_i E_i =\phi_n(\tau_j)\phi_n(\sigma_i)\phi_n(\eta_j).
$$
\end{proof}
\begin{remark}\label{tthomopsiphi}\rm
 Also the domain of the homomorphisms (3) of  Proposition \ref{homopsiphi}   can be extended to $TSB_n$.  As  in  the proposition above, we keep the same notations, that is $\psi_{n,\w, \x, \y}^{\prime}$ and   $\phi_{n,\w, \x, \y}^{\prime}$, for these extensions.
\end{remark}

The invariants $\Psi_{\x, \y}$ and $\Phi_{\x, \y}$ for singular links can be extended to invariants for tied singular links simply by taking, respectively, in the definition of  (\ref{Psi}) and (\ref{Phi}), the extensions  $\psi_{n,\sqrt{\c}, \x, \y}$ and $\phi_{n,\sqrt{\c}, \x, \y}$ to $TSB_n$ of Proposition \ref{thomopsiphi}; we denote again these invariants for  cts--links by
$\Psi_{\x, \y}$ and $\Phi_{\x, \y}$. Repeating the argument on the invariants of Theorem \ref{tildePsiPhi}, we obtain invariants for cts--links  again keeping  the  notation $\Psi_{\x, \y}^{\prime}$ and $\Phi_{\x, \y}^{\prime}$.

\subsection{Skein rules}\label{ctsskein}

In this  section  we  will   define  the  invariants $\Phi_{\x, \y}$  and  $\Psi_{\x, \y}$    by  skein  rule  and  desingularization.  Recall  now that  both $\Phi_{\x, \y}$  and  $\Psi_{\x, \y}$   are  extensions of  $\F$ to  singular links, therefore the skein rules of them must  contain the  skein rules  of  $\F$; so, in particular,  the defining skein relations of  $\F$ will be reformulated
  in  the  context of  cts--links.
 Recall  that  in a  cts--link $(L,J)$, the components of  $L$ are numbered and  the  parts of  the  set partition $J$  are  standardly  indexed.   Now  we need to introduce the notations below.
\begin{notation}\label{LPM} \rm Consider a  generic  diagram  of a cts--link $(L,J)$, suppose  that  $J$ has   blocks  $J_1,\ldots,J_m$  and $L$  has a  positive crossing  such that the components of this crossing belong     to  two    blocks  $J_i$ and  $J_k$ ($i\le k$) of  $J$.    We  shall  denote  by:
  \begin{enumerate}
\item  $(L_+^{i,k},J) $ the link  $(L,J)$;
\item    $(L_-^{i,k},J )$ the same  as the previous,  but  the  positive crossing is  replaced  by  a  negative  crossing;
\item    $(L_\times^{i,k},J )$ the same  as above,  but now the  crossing is  replaced  by  a  singular crossing;
\item    $(L_+^{i,i},  J' ) $ as $(L_+^{i,k},J) $,   where  $J'$ is the set partition obtained from $J$ by considering  the union of $J_i$  and  $J_k$  as a unique part;
\item    $(L_-^{i,i}, J' ) $ the same  as the previous,  but  the  positive crossing is  replaced  by  a  negative  crossing;
\item    $(L_\times^{i,i},J' ) $ the same  as the previous,  but  the    crossing is  now a  singular  crossing;
\item     $(L_0^{i,i}, J'' )$ the initial  link,  where  the  crossing strands are replaced by  two  non crossing strands and  the  parts  containing  the  components crossing   merge in a  unique part in $J''$.
    \end{enumerate}
\end{notation}

\begin{remark}\rm   Let us suppose $J\in \PP_n$ has $m$ blocks, we have:
\begin{enumerate}
\item  If the crossing components  belong to different blocks,  then    $J'$ is in $\PP_n$ and  has $(m-1)$   blocks.
 Moreover,   the  two  crossing components     merge in a  unique  component in $L_0^{i,i}$,    therefore  $J''\in \PP_{n-1}$.
\item  In the case that the  crossing components   belong to a same block of the set partition, we have  $J'=J$.
   However,  observe  that  the  two    crossing strands   may  belong to  two  different components   or to  a  same  component. In the  first  case, the  two  different components  merge in a  unique component  in     $ L_0^{i,i}$,     then    $J''\in \PP_{n-1}$  and  has $m$ parts;  in the  second  case, the  component  splits in two components,    still belonging  to  the  same  $i$--th   block, thus,   $J''\in \PP_{n+1}$.

\item  Observe that, in order to  define  the skein rules,  neither the  total number of  components nor  the  total  number  of  parts of  the partition  is   relevant.  Therefore
we shall  use  the  notation  $(i,k)$  for  both  cases  $J_i=J_k$  and  $J_i \not=J_k$.
\end{enumerate}
\end{remark}

Before of stating  the main theorems of this section
we introduce  the notation ${\rm I}(L,J)$ to indicate the value  of the invariant $\rm I$ on the cts--link  $(L,J)$.

\begin{theorem}\label{theoremPhi}   The  invariant   $\Phi_{\x, \y}$ of  singular tied links is  defined   uniquely  by  four rules.   More precisely,  the values of $\Phi_{\x, \y}$ on a cts--link $(L,J)$, with $n$ components, is determined through  the rules:

 \begin{enumerate}
\item[I] The  value of  $\Phi_{\x, \y}$ is  equal to 1  on  the  unknotted circle.

\item[II]
$$
\Phi_{\x, \y}(L \sqcup O, \iota_n(J))= \frac{1}{ \a\sqrt{\c}} \Phi_{\x, \y}(L,J),
$$
 where $\iota_n$ is the natural inclusion of $\PP_n$ into $\PP_{n+1}$ (see Definition \ref{Pinf}).
\item[III]  Skein rule.
$$
\frac{1}{\sqrt \c} \Phi_{\x, \y}(L_+^{i,k},J)+  \sqrt \c \Phi_{\x, \y}(L_-^{i,k},J)=  \frac{1}{\sqrt \c} (1-\u^{-1}) \Phi_{\x, \y}(L_+^{i,i},J') + (1-\u^{-1})  \Phi_{\x, \y} (L_0^{i,i}, J'').
$$
\item[IV] Desingularization.
$$
  \Phi_{\x, \y}(L_\times^{i,k},J)= \x  \Phi_{\x, \y} (L_0^{i,i}, J'' )   +  \y    \Phi_{\x, \y}(L_+^{i,i},J')  .
$$
\end{enumerate}
\end{theorem}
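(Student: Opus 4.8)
The plan is to establish the two halves of the characterisation separately: first that $\Phi_{\x,\y}$, as defined in~(\ref{Phi}) and extended to combinatoric tied singular links by Proposition~\ref{thomopsiphi}, does satisfy rules I--IV; and then that rules I--IV determine the value of \emph{any} function on $\mathcal{L}^{t,s}$, so that $\Phi_{\x,\y}$ is the unique one satisfying them.

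For the first half I would realise each cts--link occurring in a rule as the closure of a tied singular braid (Theorem~\ref{AlexanderTSL}), apply the homomorphism $\phi_{n,\sqrt{\c},\x,\y}$ of Proposition~\ref{thomopsiphi} (abbreviated $\phi_n$) followed by the Markov trace $\rho_n$, and read the rule off an identity in $\mathcal{E}_n$. Rule~I is immediate: the unknotted circle is $\widehat{(\1_1,1)}$ with $1\in SB_1$, $\rho_1(1)=1$, and the normalising factor is $1$. For Rule~II I would write $L=\widehat{(I,\omega)}$ with $\omega\in SB_n$, observe that $L\sqcup O=\widehat{(\iota_n(I),\omega)}$ with $\omega$ now regarded in $SB_{n+1}$, and note that the $E$--element is unchanged while $\rho_{n+1}$ restricts to $\rho_n$ on $\mathcal{E}_n$ (a defining feature of the trace of~\cite{aijuMMJ1}); the only effect is thus one extra factor $(\a\sqrt{\c})^{-1}$ from the normalisation. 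For Rule~III I would pick tied singular braids for the five links $L_\pm^{i,k}$, $L_+^{i,i}$, $L_0^{i,i}$ that agree outside a small disc about the distinguished crossing, where the corresponding local factors are $\w T_j$, $\w^{-1}T_j^{-1}$, $\w T_jE_j$ and $E_j$ respectively --- the two $E_j$'s encoding the tie that fuses the blocks $J_i$ and $J_k$ into the single block of $J'$, resp.\ the merged block of $J''$ obtained after $0$--smoothing; cancelling the common normalising power of $\a\sqrt{\c}$ and the common words on either side, rule~III collapses to the quadratic relation~(\ref{bt9}) (in the form~(\ref{Tinverse})) multiplied on the left and right by elements of $\mathcal{E}_n$. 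Rule~IV is the most transparent: a singular crossing is $\tau_j\mapsto \x E_j+\y\w E_jT_j$, so linearity of $\rho_n$ splits the value on $L_\times^{i,k}$ as $\x$ times the closure with $\tau_j$ replaced by the tie $E_j$, which is $\Phi_{\x,\y}(L_0^{i,i},J'')$, plus $\y$ times the closure with $\tau_j$ replaced by $\w E_jT_j=\w T_jE_j$, a positive crossing together with that tie, which is $\Phi_{\x,\y}(L_+^{i,i},J')$; here all the braids involved have the same number of strands, so the normalising powers of $\a\sqrt{\c}$ match with no correction.

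For the second half I would argue by induction on the number $m$ of singular points of $(L,J)$. If $m\ge 1$, fix a diagram and a singular crossing, say on strands whose components lie in blocks $J_i$ and $J_k$; rule~IV writes ${\rm I}(L,J)$ as an $\{\x,\y\}$--combination of the values of ${\rm I}$ on two cts--links with $m-1$ singular points, which are determined by the inductive hypothesis, so ${\rm I}(L,J)$ is forced. The base case $m=0$ is precisely the assertion that a function on combinatoric tied links satisfying I, II, III is unique; this is the skein--theoretic description of the Homflypt--type invariant $\F$ for tied links obtained in~\cite{aijuJKTR1}, restated in the combinatoric language of Section~4. Concretely one reduces any tied link diagram, by crossing changes governed by rule~III, to an unlink carrying a set partition, whose value rules~I and~II compute: the term $L_+^{i,i}$ produced by~III only replaces the partition by a coarser one, and since $\PP_k$ is finite with a greatest element this recursion terminates, while the term $L_0^{i,i}$ has strictly fewer crossings. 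Putting the two inductions together, the value of $\Phi_{\x,\y}$ on every element of $\mathcal{L}^{t,s}$ is forced by I--IV.

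The hard part will be the dictionary used in the first half: translating the diagrammatic surgeries of Notation~\ref{LPM} (a crossing change, a $0$--smoothing, the insertion of a tie, the fusion of two blocks) into the algebraic moves inside the bracket $\rho_n(\phi_n(\eta_I)\phi_n(\omega))$ (passing $T_j\mapsto T_j^{-1}$, $T_j\mapsto 1$, or multiplying by $E_j$). Making this precise requires the Ryom--Hansen--type bookkeeping of Section~3 --- that $\eta_I\eta_{j,j+1}$ realises $I\ast\mu_{j,j+1}$ via Lemma~\ref{<X>}, that $\phi_n(\alpha)E_I\phi_n(\alpha)^{-1}=E_{\pi_\alpha(I)}$ by~(\ref{w(E)}), and that an unessential tie leaves the trace unchanged --- together with a careful check, case by case through the sub--situations of the Remark following Notation~\ref{LPM} (where the number of components, but never the number of braid strands, may go up or down), that the exponents of $\a$ and $\sqrt{\c}$ in the normalisation balance in each rule.
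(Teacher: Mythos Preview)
Your proposal is correct and follows essentially the same approach as the paper. The paper is more terse: it disposes of rules I--III in one stroke by observing that on non--singular cts--links $\Phi_{\x,\y}$ coincides with the invariant $\F$ of \cite{aijuJKTR1}, whose skein characterisation (existence and uniqueness) is already established there, and then verifies rule~IV exactly as you do, by expanding $\phi_n(\tau_j)=\x E_j+\y\w E_jT_j$ and using linearity of the trace; your more explicit treatment of I, II, III and the separate induction on the number of singularities for uniqueness are just an unpacking of what the citation to \cite{aijuJKTR1} covers.
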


\begin{theorem}\label{theoremPsi}
The  invariant $\Psi_{\x, \y}$ is  defined  by  the  same  rules {\rm  I--III }  as $\Phi_{\x, \y}$ in Theorem \ref{theoremPhi} but the desingularization rule {\rm  IV } is  replaced  by
\begin{enumerate}
\item[IV']
$$
  \Psi_{\x, \y}(L_\times^{i,k},J)= \x \Psi_{\x, \y} (L_0^{i,i}, J'' )+     \y    \Psi_{\x, \y}(L_+^{i,k},J) .
$$
\end{enumerate}
 \end{theorem}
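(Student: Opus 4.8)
The plan is to show that rules I--III and IV' of Theorem~\ref{theoremPsi} are satisfied by the invariant $\Psi_{\x,\y}$ defined in~(\ref{Psi}), and that these rules determine $\Psi_{\x,\y}$ uniquely on all cts--links. Since rules I--III coincide with those of $\Phi_{\x,\y}$ in Theorem~\ref{theoremPhi}, the first task is to verify that $\Psi_{\x,\y}$ satisfies them; this verification is essentially the same computation as for $\Phi_{\x,\y}$, because $\Psi_{\x,\y}$ and $\Phi_{\x,\y}$ are both built from the Markov trace $\rho$ on $\mathcal{E}_n(\u)$ via the rescaling/normalization of Jones, and rules I--III involve only the braid generators $\sigma_i^{\pm 1}$ (and the partition bookkeeping of Notation~\ref{LPM}), on which $\psi_{n,\w,\x,\y}$ and $\phi_{n,\w,\x,\y}$ agree up to the behaviour already controlled in the proof of Theorem~\ref{PsiPhi}. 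Concretely, rule~I is the normalization $\rho_n(1)=1$, rule~II is exactly the Markov-stabilization computation $\rho_{n+1}(XE_n)=\b\rho_n(X)$ together with the rescaling factor $1/(\a\sqrt{\c})$, and rule~III is the translation of the quadratic relation~(\ref{bt9}) through $\psi_{n,\sqrt{\c},\x,\y}$: multiplying $T_i^2 = 1+(\u-1)E_i+(\u-1)E_iT_i$ by $T_i^{-1}$ and applying the trace, one reads off the coefficients $1/\sqrt{\c}$, $\sqrt{\c}$, $\frac{1}{\sqrt{\c}}(1-\u^{-1})$, $(1-\u^{-1})$, where the term $E_i$ corresponds to the link $(L_+^{i,i},J')$ whose partition merges the two crossing blocks (because $\psi$ applied to $\tau_i$ or the presence of $E_i$ forces the strands into one block, cf.\ the role of $E_I$ in~(\ref{EI}) and~(\ref{w(E)})), and the term with the two strands smoothed corresponds to $(L_0^{i,i},J'')$.

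The genuinely new point is rule~IV'. Here I would compute directly: by Proposition~\ref{homopsiphi}(1), $\psi_{n,\sqrt{\c},\x,\y}(\tau_i)=\x+\y\sqrt{\c}\,T_i$, so for a singular braid $\omega$ with a distinguished singular generator $\tau_i$, writing $\omega=\omega'\tau_i\omega''$, linearity of $\rho_n$ gives
\[
(\rho_n\circ\psi_{n,\sqrt{\c},\x,\y})(\omega)=\x\,(\rho_n\circ\psi)(\omega'\omega'')+\y\,(\rho_n\circ\psi)(\omega'\sqrt{\c}\,T_i\omega'').
\]
The first term, after rescaling, is $\x$ times the invariant of the link $(L_0^{i,i},J'')$ obtained by smoothing the crossing (replacing $\tau_i$ by the identity braid generator merges no strands but removes the crossing, and the partition $J''$ is the one dictated by Notation~\ref{LPM}(7)); the second term is $\y$ times the invariant of $(L_+^{i,k},J)$, the link with $\tau_i$ replaced by the positive crossing $\sigma_i$, with the \emph{same} partition $J$ — and this is precisely why rule~IV' has $(L_+^{i,k},J)$ rather than $(L_+^{i,i},J')$: the homomorphism $\psi$ sends $\tau_i$ to $\x+\y\sqrt{\c}T_i$ with no factor $E_i$, so no block-merging occurs, in contrast to $\phi$ which sends $\tau_i$ to $\x E_i+\y\sqrt{\c}E_iT_i$ and forces the merge in rule~IV. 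One must check the rescaling exponents match on both sides: the singular generator does not change the strand number $n$, so all three links have the same $n$ and the prefactor $(1/(\a\sqrt{\c}))^{n-1}$ is common; this is routine.

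For uniqueness, I would argue by a double induction, exactly as in the classical Homflypt case: induct first on the number of singular points, using rule~IV' to reduce any cts--link to a $\mathbb{Z}[\x,\y]$--combination of cts--links with strictly fewer singularities (each application of IV' trades one $L_\times$ for an $L_0$ and an $L_+$, both with one fewer singular crossing), and then, for a classical cts--link, induct on the number of crossings and components using rules I--III, where rule~III is used as a descending skein induction (on crossing number, after fixing an unknotting order) and rule~II handles split unknots down to the base case rule~I; the partition data is carried along deterministically by Notation~\ref{LPM}, so it introduces no ambiguity. The main obstacle I anticipate is not any single computation but the careful matching of the \emph{partition bookkeeping} of Notation~\ref{LPM} with what the algebra actually produces — i.e.\ confirming at each step that the block $J'$, $J''$, or $J$ appearing on the algebraic side (governed by which $E$'s occur in the image under $\psi$) is exactly the one prescribed diagrammatically — and, relatedly, handling the case distinction in the Remark after Notation~\ref{LPM} where the two smoothed strands may lie on the same component (so $J''\in\PP_{n+1}$) versus different components (so $J''\in\PP_{n-1}$); but since $\Psi_{\x,\y}$ is already known to be a well-defined isotopy invariant by Theorem~\ref{PsiPhi} and its extension in Section~\ref{ctsinv}, this bookkeeping only needs to be checked to be \emph{consistent}, not independently well-posed.
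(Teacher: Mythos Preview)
Your proposal is correct and follows essentially the same route as the paper: the desingularization rule IV' is verified exactly as you do, by expanding $\psi_{n,\sqrt{\c},\x,\y}(\tau_i)=\x+\y\sqrt{\c}\,T_i$ inside a word $\alpha\tau_i\beta$, using linearity of $\rho$, and identifying the two summands with the closures of $\alpha\beta$ and $\alpha\sigma_i\beta$, giving $(L_0^{i,i},J'')$ and $(L_+^{i,k},J)$ respectively; your observation that the absence of an $E_i$ factor is precisely what keeps the partition $J$ unchanged in the second term is the key point. For rules I--III the paper takes a shortcut you may want to adopt: since $\psi$ and $\phi$ agree on $B_n$, on non-singular cts--links $\Psi_{\x,\y}$ coincides with the invariant $\F$ of \cite{aijuJKTR1}, whose skein rules I--III are already established there, so nothing needs to be re-derived. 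One small correction: your justification of rule II via $\rho_{n+1}(XE_n)=\b\,\rho_n(X)$ is not right---adding a disjoint unknot with partition $\iota_n(J)$ inserts no $E_n$; the factor $1/(\a\sqrt{\c})$ comes solely from the shift $(n-1)\to n$ in the normalization exponent together with the compatibility $\rho_{n+1}\vert_{\mathcal{E}_n}=\rho_n$.
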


\begin{proof} [Proof of  Theorems \ref{theoremPhi} and \ref{theoremPsi}]  \rm  For  non singular combinatoric tied links, both  $\Phi_{\x, \y}$ and  $\Psi_{\x, \y}$  coincide with  the polynomial  $\F$  for  tied links,  defined  in
\cite[Theorem 2.1]{aijuJKTR1}; indeed,   rules I--III  are  exactly  the skein  rules I--III of  $\F$, under the  replacements $\u  \rightarrow u$,  $\sqrt \c \rightarrow  w$, $\a \rightarrow z$,  and  observing that the
  translation between the notations of tied links \cite[Fig. 3]{aijuJKTR1} and cts--links of Notation \ref{LPM} is as follows: the   tied link    $L_\pm$ with  a positive/negative crossing corresponds to   the cts--link   $(L_\pm^{i,k},J) $; $L_{\pm,\sim}$ corresponds  to     $  (L_{\pm}^{i,i},  J' )$   and  $L_{0,\sim}$ corresponds to  $ (L_0^{i,i},J'')$. To conclude the the proof, it remains to verify    the  desingularization rules IV and  IV'.   Suppose  that   the  cts--link $(L_\times^{i,j},  J )$  has only one singularitiy,  and  that it   is the  closure of the  singular  tied  braid $\omega=\alpha \tau_i \beta$,   with $\alpha,\beta \in  \TB $.   In order  to  calculate $\Phi_{\x, \y}$ (respectively  $\Psi_{\x, \y}$), we  have  to  calculate  the trace of the  image of $\omega$ in the  bt--algebra.    By  using Proposition \ref{homopsiphi}, we  obtain that the image of  $\omega$ splits into a  linear combination of  two  elements,  precisely
$$
\x ( \phi_{n,\w, \x, \y}(\alpha)  E_i   \phi_{n,\w, \x, \y}(\beta) ) + \y (\phi_{n,\w, \x, \y}(\alpha) \w  E_i  T_i  \phi_{n,\w, \x, \y}(\beta) ),
$$
  (respectively,
  $  \x (\psi_{n,\w , \x, \y}(\alpha)  \psi_{n,\w , \x, \y}(\beta)) + \y (\psi_{n,\w , \x, \y}(\alpha)  \w T_i  \psi_{n,\w   , \x, \y}(\beta))).$
   These  elements  are  the images in  the bt--algebra of  $\alpha\eta_i \beta$ and $\alpha \sigma_i \eta_i \beta$
(respectively, of $\alpha  \beta$ and $\alpha \sigma_i   \beta$),  whose  closures give  the cts--links $ (L_0^{i,i},  J'') $ and $( L_+^{i,i},  J' )$, (respectively the cts--links  $ (L_0^{i,i},  J'' )$ and $ (L_+^{i,j},  J) $.  The desingularization rules  IV  and IV' then  follow from the  linearity of  the  trace together  with  the defining formulae (\ref{Phi})  and  (\ref{Psi}). If  the  number of  singularities  of the  cts--link is  higher,  say $m$,  the argument  remains  the  same, i.e., by comparing  the result of  the  desingularization  rule IV (or IV') to  all $m$ singularities of  the  link (result that is  independent from  the order on which  they  are applied) and  the image  in the  bt--algebra    of the  corresponding singular braid with $m$  elements  $\tau_i$, according to   the  respective map of  Proposition \ref{homopsiphi}.
\end{proof}

The analogous of Theorems \ref{theoremPhi} and \ref{theoremPsi} are as follows.

\begin{theorem}\label{theoremPhiPsiprime} { \quad \quad}
  \begin{enumerate}
\item
The  invariant $\Phi'_{\x, \y}$ is  defined  by  the  same  rules {\rm  I, II} and   {\rm IV }  as $\Phi_{\x, \y}$ in Theorem \ref{theoremPhi} but the skein rule {\rm  III } is  replaced   now   by:
\begin{enumerate}
\item[III']   Skein rule,
$$
\frac{1}{\sqrt \d} \Phi'_{\x, \y}(L_+^{i,k},J)+  \sqrt \d \Phi'_{\x, \y}(L_-^{i,k},J)=    (\v-\v^{-1})  \Phi'_{\x, \y} (L_0^{i,i}, J'').
$$
\end{enumerate}
\item
The  invariant $\Psi'_{\x, \y}$ is  defined  by  the  same  rules {\rm  I, II} and  {\rm IV' }  as $\Psi_{\x, \y}$ in Theorem \ref{theoremPsi} but the skein rule {\rm  III } is  replaced   by
 the  {\rm  III'}  above in which  $\Phi'$ is replaced  by $\Psi'$.
\end{enumerate}
 \end{theorem}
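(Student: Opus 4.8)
The plan is to follow closely the proof of Theorems \ref{theoremPhi} and \ref{theoremPsi}, replacing the presentation $\mathcal{E}_n(\u)$ by the presentation $\mathcal{E}_n(\v)$ and the homomorphisms $\phi_{n,\w,\x,\y}$, $\psi_{n,\w,\x,\y}$ by their primed counterparts $\phi'_{n,\w,\x,\y}$, $\psi'_{n,\w,\x,\y}$ of Proposition \ref{homopsiphi}(3). For part (1), I would first verify that for non singular combinatoric tied links, $\Phi'_{\x,\y}$ coincides with the Homflypt--type polynomial for tied links obtained from the $\mathcal{E}_n(\v)$--presentation (equivalently, with the invariant $\Theta$ of \cite{chjukalaIMRN} after the appropriate substitution of variables and the rescaling recorded in \eqref{Psitilde}--\eqref{Phitilde}), and hence that its non singular skein rule is exactly III$'$. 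This is where the quadratic relation \eqref{newbt9} and its inverse \eqref{Ttildeinverse} enter: unlike \eqref{bt9}, relation \eqref{newbt9} has no $E_i$ term standing alone, so the term $(1-\u^{-1})\Phi(L_+^{i,i},J')$ that appears in rule III disappears, leaving only the $(\v-\v^{-1})\Phi'(L_0^{i,i},J'')$ term; this is precisely the classical difference between the two standard forms of the Homflypt skein relation. Concretely, I would compute $\tfrac{1}{\sqrt\d}\rho_n(\omega V_i^{-1})$ and $\sqrt\d\,\rho_n(\omega V_i)$ for $\omega$ the tied braid whose closure presents the crossing, expand $V_i^{-1}=V_i-(\v-\v^{-1})E_i$ from \eqref{Ttildeinverse}, apply linearity of $\rho_n$ together with \eqref{w(E)} to identify $\rho_n(\omega E_i)$ with the value on $(L_0^{i,i},J'')$, and then rescale by $\bigl(\tfrac{\v}{\a\sqrt\d}\bigr)^{n-1}$ according to \eqref{Phitilde}. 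The verification that rules I, II, and the desingularization rule IV carry over unchanged is identical to the corresponding steps in the proof of Theorem \ref{theoremPhi}, since the desingularization of $\tau_i$ under $\phi'_{n,\w,\x,\y}$ still splits as $\x\,E_i + \y\w\,E_iV_i$, whose two summands are the images of $\alpha\eta_i\beta$ and $\alpha\sigma_i\eta_i\beta$; thus the closures are again $(L_0^{i,i},J'')$ and $(L_+^{i,i},J')$, and linearity of the trace plus \eqref{Phitilde} give IV.

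For part (2), I would argue that $\Psi'_{\x,\y}$ satisfies I, II exactly as above, that its desingularization rule is IV$'$ for exactly the reason it is in Theorem \ref{theoremPsi} (the image of $\tau_i$ under $\psi'_{n,\w,\x,\y}$ is $\x + \y\w V_i$, whose summands are the images of $\alpha\beta$ and $\alpha\sigma_i\beta$, closing to $(L_0^{i,i},J'')$ and $(L_+^{i,k},J)$ respectively), and that its non singular skein rule is III$'$ with $\Phi'$ replaced by $\Psi'$ — which holds because, on non singular tied braids, $\psi'_{n,\w,\x,\y}$ and $\phi'_{n,\w,\x,\y}$ send $\sigma_i$ to the same element $\w V_i$ and differ only on the $\tau_i$'s, so they induce the same invariant of non singular combinatoric tied links, namely the $\mathcal{E}_n(\v)$--Homflypt polynomial governed by III$'$. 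Finally I must check that these rules determine the invariant uniquely; this is the standard induction on the number of crossings and singularities: IV$'$ removes a singularity at the cost of one lower--singularity link and one link with a crossing resolved, III$'$ together with the crossing--switching argument reduces the number of negative (or positive) crossings, II removes split unknotted components, and I gives the base case. The only genuine obstacle is making the uniqueness induction airtight in the presence of the block bookkeeping — one has to keep track of how the indices $(i,k)$, the merged partition $J''\in\PP_{n\pm1}$, and the inclusions $\iota_n$ interact when a crossing joins two components of the same block versus two different blocks, exactly as discussed in the Remark following Notation \ref{LPM}; but this bookkeeping is identical to the one already carried out for $\Phi_{\x,\y}$ and $\Psi_{\x,\y}$, so I would simply invoke it rather than repeat it.

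In short, the proof is a transcription of the proof of Theorems \ref{theoremPhi} and \ref{theoremPsi} with the dictionary $\u\rightsquigarrow\v$, $T_i\rightsquigarrow V_i$, $\c\rightsquigarrow\d$, $\sqrt\c\rightsquigarrow\sqrt\d$, rescaling factor $(\a\sqrt\c)^{-1}\rightsquigarrow(\v^{-1}\a\sqrt\d)^{-1}$, and quadratic relation \eqref{bt9}$\rightsquigarrow$\eqref{newbt9}; the single substantive change is that the $(1-\u^{-1})E_i$ contribution in III becomes the lone $(\v-\v^{-1})$ term in III$'$, which is forced by \eqref{Ttildeinverse}. I expect the routine computations — expanding $V_i^{\pm1}$, applying $\rho_n$, and matching the rescaling exponents in \eqref{Psitilde} and \eqref{Phitilde} — to be the bulk of the writing, with no real difficulty beyond care with the five variables $\v,\x,\y,\b,\sqrt\d$ (equivalently $\v,\x,\y,\a,\sqrt\d$).
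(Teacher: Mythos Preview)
Your proposal is correct and follows essentially the same approach as the paper: identify $\Phi'_{\x,\y}$ and $\Psi'_{\x,\y}$ on non-singular combinatoric tied links with the known invariant $\overline\Theta$ of \cite{chjukalaIMRN} (whose skein rules are precisely I, II, III$'$ under the substitutions $\v\to q$, $\c\to\lambda$, $\a\to z$), and then verify the desingularization rules IV and IV$'$ exactly as in the proof of Theorems \ref{theoremPhi} and \ref{theoremPsi}. The paper's proof is much terser than yours --- it simply cites $\overline\Theta$ rather than rederiving III$'$ from the quadratic relation \eqref{newbt9} --- but your explicit computation with $V_i^{-1}=V_i-(\v-\v^{-1})E_i$ is the content behind that citation, so the two arguments are the same in substance.
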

\begin{proof} \rm  For  non singular combinatoric tied links, both  $\Phi'_{\x, \y}$ and  $\Psi'_{\x, \y}$  coincide with  the polynomial  $\overline \Theta$  for  tied links,  defined  in
 \cite{chjukalaIMRN}; indeed,   rules I--III'  are  exactly  the skein  rules of  $\overline \Theta$, under the  replacements $\v \rightarrow q$,  $\c \rightarrow  \lambda$, $\a \rightarrow z$. After,  the proof proceeds as the proof of Theorems  \ref{theoremPhi}  and \ref{theoremPsi}.
 \end{proof}

\begin{remark} \label{desing}\rm
 \begin{enumerate}
 \item  The  desingularization rules  IV   and   IV'   coincide  when  the components crossing at the  singular point belong to  the  same  part of the  partition.  This  implies that  $\Phi_{\x, \y}=\Psi_{\x, \y}$ for knots  and  cts--links  having a  set  partition  with  a  sole  part.
 \item The  invariants  $\Phi$ and $\Phi'$  have  the  same desingularization rule IV,  while he  invariants  $\Psi$ and $\Psi'$  have  the  same desingularization rule IV'.
\item From the   desingularization rules  IV   and   IV'  it follows that  the    invariant polynomial $\Phi_{\x, \y}$  as well  as  the other invariants $\Phi'_{\x, \y},\Psi_{\x, \y}$  and $\Psi'_{\x, \y}$,   when  evaluated on  a  cts--link  $\Sing$ with  $m$  singularities, is  homogeneous of degree $m$ in  the  variables $\x,\y$ (see  also  Remark \ref{Phixx}):
 $$
 \Phi_{\x,\y}(\Sing) =  \x^m \Phi_{1,\y/\x} (\Sing).
 $$
 \end{enumerate}
  \end{remark}

\section{Comparison of  invariants}

In  this   section  we  compare  the  invariants  here introduced with each other and  with   the Paris--Rabenda invariant \cite{paraALIF}.   The comparison of our invariants is done  on pairs of singular links constructed
from pairs of non isotopic classical links not distinguished  by the Hompflyt
polynomial; these pairs are taken from  \cite{chli}.

\subsection{Notation and some elementary facts.}
In what follows we  will  denote by:
\begin{enumerate}
\item $P$    the  Homflypt  polynomial for  classical  links,
\item $\F$    the polynomial for tied  links,
\item
  $\F'$  the polynomial for tied links   $\overline \Theta$  (see \cite{chjukalaIMRN})  constructed by the Jones recipe as $\F$ but using the presentation of the bt--algebra $\E_n(\v)$  (see Remark  \ref{Vi}) instead of the presentation $\E_n(\u)$,
\item $ \I_{PR}$   the  polynomial  for singular links  due to Paris and Rabenda \cite{paraALIF},
  \item        When   we  say  that   two  non  isotopic  (singular) links   with $n$ components   $L$  and $L'$ are distinguished  by an invariant $\I$  for  tied (singular) links,   we  mean   that $\I(L,\1_n)\not=\I(L^{\prime},\1_n)$.
    In the  same  way we did  in   \cite[Subsection 2.3]{aijuMathZ}.
\end{enumerate}

  The following proposition comes out   by an appropriate renaming of the variables.

\begin{proposition}\label{valuesinv}
\begin{enumerate}
\item If $L$  is  a classical  link,  then  $\I_{PR}(L)=P(L)$,
\item If $L$ is a   classical  link   and $I$    the set partition    with a  unique  block,  then:
$$
\F(L,I)=\F'(L,I)=P(L),
$$
\item If $L$ is a  non singular tied link  with $n$ components, then:
 $$
\Phi_{\x, \y}(L,\1_n)=\Psi_{\x, \y}(L,\1_n)=\F(L,\1_n),
$$
 \item If $L$ is a  non singular tied link  with $n$ components, then:
 $$
\Phi_{\x, \y}^{\prime}(L,\1_n)=\Psi_{\x, \y}^{\prime}(L,\1_n)
=\F^{\prime}(L,\1_n),
$$
 \item If $L$ is a singular classical link   and $I$    the set partition    with a  unique  block,  then
 $$
\Psi_{\x, \y}(L,I)=\Psi'_{\x, \y}(L,I)=\Phi_{\x, \y}(L,I)=\Phi'_{\x, \y}(L,I)=\I_{PR}(L).
$$

\end{enumerate}
\end{proposition}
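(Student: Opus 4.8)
The plan is to read all five equalities as instances of one principle: after an appropriate change of variables, two families of invariants that satisfy the \emph{same} system of defining axioms (a Jones--recipe formula, or equivalently a system of skein/desingularization rules) must coincide, by the uniqueness statements already available (Theorem \ref{theoremPhi}, Theorem \ref{theoremPhiPsiprime}, uniqueness of the Homflypt polynomial, and the defining properties of $\F$, $\F'=\overline{\Theta}$ and $\I_{PR}$ from \cite{aijuJKTR1}, \cite{chjukalaIMRN}, \cite{paraALIF}). I would begin with (3) and (4), which are essentially already recorded inside the proofs of Theorems \ref{theoremPhi} and \ref{theoremPhiPsiprime}: for a \emph{non-singular} tied link $(L,\1_n)$ the representative braid lies in $TB_n$, so no generator $\tau_i$ (hence neither $\x$ nor $\y$) occurs, and on $B_n$ together with the ties both $\psi_{n,\w,\x,\y}$ and $\phi_{n,\w,\x,\y}$ specialize to $\sigma_i\mapsto\w T_i$, $\eta_i\mapsto E_i$, which is exactly the map defining $\F$; under the renaming $\u\to u$, $\sqrt{\c}\to w$, $\a\to z$ this yields $\Phi_{\x,\y}(L,\1_n)=\Psi_{\x,\y}(L,\1_n)=\F(L,\1_n)$, and the same computation with $V_i$ and $\E_n(\v)$ in place of $T_i$ and $\E_n(\u)$ gives (4), with $\F'$ in the role of $\F$.

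For (2) I would restrict the skein description of $\F$ (rules I--III, read through the renaming above) to the sub-family of tied links whose set partition is the single block $I$. That sub-family is closed under every skein move: tying already-tied components does nothing, so $L_{+,\sim}=L_+$ (equivalently $J'=J$), and the merge/split that produces $L_{0,\sim}$ leaves all components in one block. Hence on this sub-family rule III collapses to a skein relation of Homflypt type among $L_+,L_-,L_0$ only, rule II is the Homflypt split--unknot normalization, and rule I gives value $1$ on the unknot; by uniqueness of the Homflypt polynomial, $\F(L,I)=P(L)$, and verbatim with rule III' ($\v\to q$, $\c\to\lambda$, $\a\to z$) one gets $\F'(L,I)=P(L)$. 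Statement (1) is the same observation for $\I_{PR}$: its defining relation of \cite{paraALIF}, restricted to classical crossings, is the Homflypt skein relation up to renaming, and it is normalized to $1$ on the unknot, so $\I_{PR}(L)=P(L)$ for classical $L$.

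For (5) I would first invoke Remark \ref{desing}(1): rules IV and IV' agree when the two strands of a singular crossing lie in the same part, so on singular links carrying the single-block partition $I$ we already have $\Phi_{\x,\y}(\cdot,I)=\Psi_{\x,\y}(\cdot,I)$ and $\Phi'_{\x,\y}(\cdot,I)=\Psi'_{\x,\y}(\cdot,I)$; it then suffices to prove $\Phi_{\x,\y}(L,I)=\Phi'_{\x,\y}(L,I)=\I_{PR}(L)$. Exactly as in (2), the single-block sub-family is stable under all the moves of Theorems \ref{theoremPhi}--\ref{theoremPhiPsiprime} (always $J'=J$, and the new part is again a single block), so on it $\Phi_{\x,\y}$ is characterized by rule I, the split--unknot rule II, a Homflypt-type skein relation on classical crossings, and the desingularization rule $\Phi_{\x,\y}(L_\times^{i,i},J)=\x\,\Phi_{\x,\y}(L_0^{i,i},J'')+\y\,\Phi_{\x,\y}(L_+^{i,i},J)$, while $\Phi'_{\x,\y}$ is characterized by the same axioms with III replaced by III'. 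Comparing with the axioms recalled in \cite{paraALIF} for $\I_{PR}$ and fixing the dictionary between $(\u,\c,\a,\x,\y)$ and the Paris--Rabenda variables so that the classical-skein coefficients and the desingularization coefficients match term by term, the two axiom systems become identical, whence equality by uniqueness. The only genuine work, and the step where I expect to spend the effort, is pinning down that dictionary in (5): exhibiting the precise renaming which makes the five-variable presentation of $\Phi_{\x,\y}$ and $\Phi'_{\x,\y}$ coincide on the nose with the four-variable presentation of $\I_{PR}$ -- in particular checking how $\x,\y$ are absorbed into the Paris--Rabenda parameters once the classical skein part is aligned -- while everything else reduces directly to uniqueness theorems already proved.
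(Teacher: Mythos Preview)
Your proposal is correct and aligned with the paper's approach: the paper disposes of this proposition in a single sentence, ``The following proposition comes out by an appropriate renaming of the variables,'' and your argument is exactly a careful unpacking of why such a renaming suffices --- matching the skein/desingularization axioms on the relevant sub-families and invoking the uniqueness theorems already in hand. Your identification of the variable dictionary in (5) as the only place requiring genuine care is accurate; the paper simply asserts this without spelling it out.
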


\subsection{Differences between   $\Phi_{\x, \y}$  and  $\Psi_{\x, \y}$} \label{ctsproof}
   In  this  section  we  analyze some  properties of $\Phi_{\x, \y}$  and $\Psi_{\x, \y}$.   By  Remark \ref{desing} (2),    the next proposition and     Theorem \ref{SS}  hold identically  if  $\Phi_{\x, \y}$  and $\Psi_{\x, \y}$  are  replaced,  respectively, by $\Phi'_{\x, \y}$  and $\Psi'_{\x, \y}$.

The following proposition shows  that  $\Psi_{\x, \y}$ is  more powerful  than  $\Phi_{\x, \y}$ on  cts--links.

 Take any  classical  singular link  $\Sing$ with $n$ components,  having  at  least one singularity  involving  two  distinct  components $i$  and $j$.  Consider  the cts--links  $(\Sing_\times^{i,j},\1_n)$  and $(\Sing_\times^{i,i},\{\{i,j\}\})$.
 \begin{proposition} $$ \Psi_{\x, \y}(\Sing_\times^{i,j},\1_n) \not= \Psi_{\x, \y}(\Sing_\times^{i,i},\{\{i,j\}\}), $$
 while
  $$ \Phi_{\x, \y}(\Sing_\times^{i,j},\1_n)  = \Phi_{\x, \y}(\Sing_\times^{i,i},\{\{i,j\}\}). $$
 \end{proposition}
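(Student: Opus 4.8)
The plan is to evaluate $\Phi_{\x,\y}$ and $\Psi_{\x,\y}$ through their desingularization rules (Theorems~\ref{theoremPhi} and \ref{theoremPsi}), applied at the singular crossing of $\Sing$ whose two strands lie on the components $i$ and $j$; all the other singularities of $\Sing$ are untouched by this move, hence appear identically in every expression below. For $\Phi_{\x,\y}$, apply rule IV at that crossing. In $(\Sing_\times^{i,j},\1_n)$ the two strands belong to the distinct singleton blocks $\{i\}$ and $\{j\}$, so rule IV gives $\x\,\Phi_{\x,\y}(\Sing_0^{i,i},\1_{n-1})+\y\,\Phi_{\x,\y}(\Sing_+^{i,i},\{\{i,j\}\})$: the smoothing $\Sing_0^{i,i}$ merges $i$ and $j$ into one component carrying the trivial partition, and the positive resolution carries the merged partition $\{\{i,j\}\}$. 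In $(\Sing_\times^{i,i},\{\{i,j\}\})$ the crossing is already internal to the block $\{i,j\}$, so rule IV keeps that partition and produces the very same expression. Hence $\Phi_{\x,\y}(\Sing_\times^{i,j},\1_n)=\Phi_{\x,\y}(\Sing_\times^{i,i},\{\{i,j\}\})$.

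For $\Psi_{\x,\y}$ one applies rule IV' instead; now its second term keeps the ambient partition, so, writing $\Sing_+$ for $\Sing$ with the distinguished crossing resolved positively,
$$
\Psi_{\x,\y}(\Sing_\times^{i,j},\1_n)=\x\,\Psi_{\x,\y}(\Sing_0^{i,i},\1_{n-1})+\y\,\Psi_{\x,\y}(\Sing_+,\1_n),
$$
$$
\Psi_{\x,\y}(\Sing_\times^{i,i},\{\{i,j\}\})=\x\,\Psi_{\x,\y}(\Sing_0^{i,i},\1_{n-1})+\y\,\Psi_{\x,\y}(\Sing_+,\{\{i,j\}\}),
$$
so the claim reduces to $\Psi_{\x,\y}(\Sing_+,\1_n)\neq\Psi_{\x,\y}(\Sing_+,\{\{i,j\}\})$. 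To get this I would compare the coefficients of $\y^{m}$, $m$ the number of singularities of $\Sing$ (recall from Remark~\ref{desing} that both polynomials are homogeneous of degree $m$ in $\x,\y$). Write $\Sing=\widehat{\omega}$ with $\omega\in SB_N$ and put $\alpha:=f(\omega)\in B_N$, so $L:=\widehat{\alpha}$ is the positive desingularization of $\Sing$ and $i,j$ are still distinct components of $L$. Since $\psi(\tau_k)=\x+\y\sqrt\c\,T_k=\x+\y\,\psi(\sigma_k)$, definition (\ref{Psi}) and the Alexander theorem for combinatoric tied links (Theorem~\ref{Alexander}) give that these $\y^m$-coefficients equal $\kappa\,\rho_N(E_{K_\alpha}\Theta)$ and $\kappa\,\rho_N(E_{K_\alpha}E_{a,b}\Theta)$ respectively, where $\kappa\neq0$ is the normalisation constant, $\Theta\in\E_N$ is the (invertible) image of $\alpha$ under $\sigma_r\mapsto\sqrt\c\,T_r$, the strands $a,b$ lie respectively in the $\pi_\alpha$-cycle of $i$ and in that of $j$, and $E_{K_\alpha}E_{a,b}=E_{K_\alpha\times\{\{i,j\}\}}$ by the defining properties of the $E_I$'s (cf.~\cite{rhJAC}). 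Thus it suffices to prove $\rho_N\big(E_{K_\alpha}(1-E_{a,b})\Theta\big)\neq0$.

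To establish this non-vanishing I would specialise $\u=1$. Then $T_r^{2}=1$ in $\E_N$, the $T_r$ generate a copy of $\mathbb C[S_N]$, and $\Theta\big|_{\u=1}=\sqrt\c^{\,e(\alpha)}T_{\pi_\alpha}$ with $e(\alpha)$ the exponent sum. The $N$ strands split into the cycles of $\pi_\alpha$; since $\rho$ factorises over disjoint blocks of strands, and one has the elementary values $\rho(E_{[c]}T_{[c]})=\a^{\,\ell_c-1}$ for an $\ell_c$-cycle $c$ and $\rho(E_{[c']\cup[c'']}T_{[c']}T_{[c'']})=\a^{\,\ell_{c'}+\ell_{c''}-2}\,\b$ for disjoint cycles $c',c''$, a direct computation yields
$$
\rho_N\big(E_{K_\alpha}(1-E_{a,b})\Theta\big)\big|_{\u=1}=\sqrt\c^{\,e(\alpha)}\,\a^{\,N-n}\,(1-\b)\neq0 .
$$
Hence the Laurent polynomial $\rho_N\big(E_{K_\alpha}(1-E_{a,b})\Theta\big)$ is itself non-zero, which finishes the argument.

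The equality for $\Phi_{\x,\y}$ is immediate from rule IV and needs no further work. The only non-routine point is the last trace computation, i.e.\ the genuine non-vanishing of $\rho_N\big(E_{K_\alpha}(1-E_{a,b})\Theta\big)$ — equivalently, the fact that a tie between the two distinct components $i,j$ of $L$ changes the value of the tied HOMFLYPT polynomial. Reducing to this trace is bookkeeping, but the decisive device is the degeneration $\u=1$, after which $\Theta$ collapses to a single permutation and the multiplicativity of the Markov trace over the cycle blocks produces the explicit non-zero value above; alternatively one may quote the known fact that such a tie strictly refines $\F$.
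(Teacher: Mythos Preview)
Your argument for the $\Phi_{\x,\y}$ equality is exactly the paper's: apply rule~IV to both cts--links and observe that the two outputs are literally the same expression. Nothing more is needed there.

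For the $\Psi_{\x,\y}$ inequality, you and the paper agree on the first step: rule~IV' produces
\[
\Psi_{\x,\y}(\Sing_\times^{i,j},\1_n)=\x\,\Psi_{\x,\y}(\Sing_0^{i,i},\1_{n-1})+\y\,\Psi_{\x,\y}(\Sing_+^{i,j},\1_n),
\]
\[
\Psi_{\x,\y}(\Sing_\times^{i,i},\{\{i,j\}\})=\x\,\Psi_{\x,\y}(\Sing_0^{i,i},\1_{n-1})+\y\,\Psi_{\x,\y}(\Sing_+^{i,i},\{\{i,j\}\}).
\]
At this point the paper simply \emph{stops}: it displays the two formulas and regards the inequality of the $\y$--terms as evident, presumably relying on the folklore that an essential tie between two distinct components always changes the value of $\F$. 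You instead supply an actual argument for that step, so your proof is strictly more complete than the paper's.

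Your extra argument is essentially correct, with two cosmetic points. First, a minor index slip: since $\Sing_+$ has $m-1$ singularities (you resolved one), the relevant top coefficient is that of $\y^{\,m-1}$, not $\y^{\,m}$; equivalently you may compare the $\y^{\,m}$ coefficients in the original $\Psi(\Sing_\times,\cdot)$. Either way, iterating rule~IV' shows that this top coefficient is $\F(L,\1_n)$ versus $\F(L,\{\{i,j\}\})$, where $L$ is the full positive desingularisation, exactly as you say. Second, the degeneration $\u=1$ is the right device: there $\c=1$, $\Theta$ collapses to $T_{\pi_\alpha}$, and the Markov trace on $E_I T_w$ is computable cycle--by--cycle via the rules of Theorem~\ref{tracerho} (using in particular $\rho_{n+1}(XT_nE_n)=\rho_{n+1}(XT_n)=\a\,\rho_n(X)$ and $\rho_{n+1}(XE_n)=\b\,\rho_n(X)$ after conjugating the cycles onto consecutive strands). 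Your values $\a^{\ell_c-1}$ and $\a^{\ell_{c'}+\ell_{c''}-2}\b$ are correct, and the difference indeed specialises to $\a^{N-n}(1-\b)\neq 0$. It would be worth spelling out the ``factorisation over cycle blocks'' a bit more explicitly, since it is really an induction using conjugation invariance of $\rho$ at $\u=1$ rather than a general multiplicativity of the trace; but the conclusion stands.
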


 \begin{proof} We  have, respectively,  by  rules  IV'  and IV:
 $$ \Psi_{\x, \y}(\Sing_\times^{i,j},\1_n)  =\x\ \Psi_{\x, \y}(\Sing_0^{i,i},\1_{n-1})+ \y \ \Psi_{\x, \y}(\Sing_+^{i,j},\1_{n}),$$
 $$\Psi_{\x, \y}(\Sing_\times^{i,i},\{\{i,j\}\})= \x\ \Psi_{\x, \y}(\Sing_0^{i,i},\1_{n-1})+\y \ \Psi_{\x, \y}(\Sing_+^{i,i},\{\{i,j\}\});$$
 while
 $$ \Phi_{\x, \y}(\Sing_\times^{i,j},\1_n)  =\Phi_{\x, \y}(\Sing_\times^{i,i},\{\{i,j\}\})= \x\ \Phi_{\x, \y}(\Sing_0^{i,i},\1_{n-1})+\y \ \Phi_{\x, \y}(\Sing_+^{i,i},\{\{i,j\}\}).$$
 \end{proof}

 Here  we  show  an  example proving that $\Phi_{\x, \y}$ is not  equivalent  to  $\Phi_{\x,\x}$,   according to  Proposition \ref{Phixy}.  The  same  example allows us to  prove  that $\Psi_{\x, \x}$ distinguishes pairs not  distinguished by $\Phi_{\x, \x}$.

Because of item (3) of Proposition \ref{valuesinv}, the  values of $\Phi_{\x, \y}$  and  $\Psi_{\x, \y}$  coincide  on classical  knots.

 \begin{center}
 \begin{figure}[H]
 \includegraphics{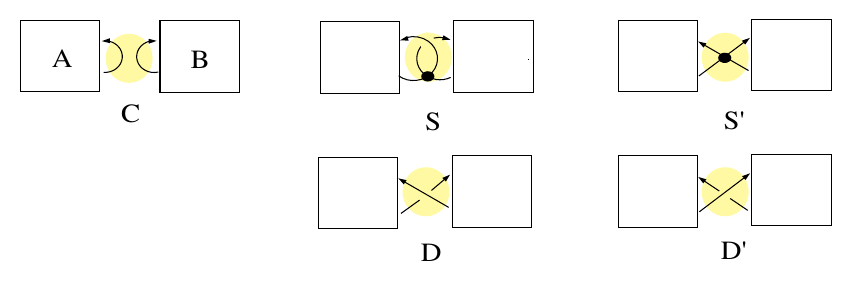}
 \caption{Two singular links $\Sing$ and  $\Sing'$  and  links  involved  in their desingularization.}
\label{exa1}
 \end{figure}
 \end{center}

Take  a link diagram $\C$ made by  two disjoint knots diagrams $\A$  and  $\B$ as shown  in Figure \ref{exa1}.  Then  consider  the  singular  links  $\Sing$  and  $\Sing'$  in  the same  figure,  obtained  by modifying the link $\C$ only  in  the  yellow disk. Evidently  $\Sing$ and  $\Sing'$  are  not  isotopic,  since $\Sing$ has  two components,  while $\Sing'$ is  a knot.

\begin{theorem} \label{SS} The  singular  links  $\Sing$  and  $\Sing'$  are  distinguished by   $\Phi_{\x, \y}$  if  and only if  $\x\not=\y$;   however, they are  distinguished  by  $\Psi_{\x, \x}$.
\end{theorem}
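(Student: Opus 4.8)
The plan is to compute the four polynomials $\Phi_{\x,\y}(\Sing)$, $\Phi_{\x,\y}(\Sing')$, $\Psi_{\x,\y}(\Sing)$ and $\Psi_{\x,\y}(\Sing')$ by applying the desingularization rules of Theorems \ref{theoremPhi} and \ref{theoremPsi} to the single singular point of each link, and to read off the resulting non--singular cts--links from Figure \ref{exa1}. First I would record the combinatorics of the two singular crossings. In $\Sing$ the singular point joins the two \emph{distinct} components, so in Notation \ref{LPM} it is of type $(i,k)$ with $i\ne k$ and ambient partition $\1_2$; its smoothing $\Sing_0$ merges those components into a knot which Figure \ref{exa1} identifies with $\A\#\B$, and its positive resolution $\Sing_+$ is the two--component link identified with $\C=\A\sqcup\B$. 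In $\Sing'$ the singular point is a self--crossing of a knot, of type $(1,1)$ with partition $\1_1$; its positive resolution $\Sing'_+$ is again the knot $\A\#\B$, while its smoothing $\Sing'_0$ splits it into the two--component link $\C$, the two new components lying in one block.

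Then, applying rule IV to $\Sing$ and to $\Sing'$, and rule IV$'$ to $\Sing$ — on the knot $\Sing'$ rules IV and IV$'$ already coincide by Remark \ref{desing}(1) — and using that $\F$ coincides with the Homflypt polynomial $P$ on knots and on links whose set partition has a single block (Proposition \ref{valuesinv}(2)--(3)), I get
\begin{align*}
\Phi_{\x,\y}(\Sing) &= \x\, P(\A\#\B)+\y\, P(\C), &
\Phi_{\x,\y}(\Sing') &= \x\, P(\C)+\y\, P(\A\#\B),\\
\Psi_{\x,\y}(\Sing) &= \x\, P(\A\#\B)+\y\, \F(\C,\1_2), &
\Psi_{\x,\y}(\Sing') &= \x\, P(\C)+\y\, P(\A\#\B).
\end{align*}
The one place where a nontrivial partition survives is the $\y$--term of $\Psi_{\x,\y}(\Sing)$: rule IV$'$ returns the positive resolution $\Sing_+$ with its \emph{original} partition $\1_2$, giving the genuine tied invariant $\F(\C,\1_2)$ rather than $P(\C)=\F(\C,\{\{1,2\}\})$.

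Subtracting, $\Phi_{\x,\y}(\Sing)-\Phi_{\x,\y}(\Sing')=(\x-\y)\bigl(P(\A\#\B)-P(\C)\bigr)$, which is $0$ exactly when $\x=\y$ provided $P(\A\#\B)\ne P(\C)$; and this inequality holds because $\A\#\B$ is a knot while $\C$ has two components, and the Homflypt polynomial separates knots from two--component links (different number of components). Likewise, at $\x=\y$ one gets $\Psi_{\x,\x}(\Sing)-\Psi_{\x,\x}(\Sing')=\x\bigl(\F(\C,\1_2)-\F(\C,\{\{1,2\}\})\bigr)$, which is a nonzero polynomial since $\F$ assigns different values to $\C$ with and without the tie between its two components — a fact one checks directly from rule II together with Proposition \ref{valuesinv}, and which is precisely the feature of $\F$ that improves on $P$. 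I expect the real work to be the bookkeeping of the blocks under the two resolutions (tracking which block merges or splits, hence whether a resolved partition has a single block so that $\F=P$) and pinning down the identifications of the resolved diagrams in Figure \ref{exa1}; the inequality $\F(\C,\1_2)\ne P(\C)$ is the point at which the example genuinely bites.
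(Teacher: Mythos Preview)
Your approach is essentially the paper's: apply rules IV and IV$'$ to the unique singular crossing, identify the four resolved links with $\D=\D'=\A\#\B$ and $\C$ (the latter carrying either the partition $\1_2$ or $\{\{1,2\}\}$ according to the rule), and then compare. Your bookkeeping of the partitions is correct, and in particular you correctly spot that the only place rule IV$'$ differs from IV is in the $\y$--term of $\Psi_{\x,\y}(\Sing)$, which picks up $\F(\C,\1_2)$ instead of $\F(\C,\{\{1,2\}\})=P(\C)$.

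The one place where the paper is sharper is in justifying the two key inequalities. Rather than invoking that $P$ separates knots from 2--component links, the paper computes explicitly: $P(\A\#\B)=P(\A)P(\B)$, $P(\C)=P(\A)P(\B)\,\f/\sqrt{\c}$ and $\F(\C,\1_2)=P(\A)P(\B)/(\a\sqrt{\c})$, with $\f=\b/\a$. Then $\Phi_{\x,\y}(\Sing)-\Phi_{\x,\y}(\Sing')$ is proportional to $(\x-\y)(1-\f/\sqrt{\c})$, and one sees directly that the second factor is not identically zero (its only roots are $\c=1$ and $\c=\u^{-2}$). Likewise at $\x=\y$ the difference $\Psi_{\x,\x}(\Sing)-\Psi_{\x,\x}(\Sing')$ is proportional to $1/(\a\sqrt{\c})-\f/\sqrt{\c}=(1-\b)/(\a\sqrt{\c})$, so the two links coincide under $\Psi_{\x,\x}$ only for $\b=1$. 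Your appeal to ``rule II together with Proposition~\ref{valuesinv}'' for $\F(\C,\1_2)\ne P(\C)$ is on the right track but, as stated, rule~II only handles adding an unknot; for general $\A,\B$ you need the disjoint--union formula for $\F$ (from \cite{aijuJKTR1}), or you can simply take $\A=\B=O$ and then rule~II suffices. Either way the explicit factor $(1-\b)$ makes the nonvanishing transparent.
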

\begin{proof} Notice that  the link $\C$  corresponds  to  the  cts--link   $(\C,	 {\1_2})$.  We  denote  by  $\tilde \C$ the link  corresponding to   the  cts--link  $(\C,\{\{1,2\}\})$.  Since  these  links  are  not singular,  we  have  $\Phi_{\x,\y}(\C)=\Psi_{\x,\y}(\C)$ and   $\Phi_{\x,\y}(\tilde\C)=\Psi_{\x,\y}(\tilde\C)$. In particular
$$
\Phi_{\x, \y}(\C)=
\Phi_{\x, \y}(\A) \Phi_{\x, \y}(\B) / (\a \sqrt{\c}) \quad   \text{and} \quad\Phi_{\x, \y}(\tilde \C) =
\Phi_{\x, \y}(\A) \Phi_{\x, \y}(\B) \f /  \sqrt{\c},
$$
where  $\f:= (\u \c-1)/(1-\u)  =\b/\a$, see  \cite{aijuJKTR1}.
Observe  that  the  knots $\D$ and  $\D'$  in Figure \ref{exa1} are isotopic,    both  corresponding to the  connected  sum of  the  knots  $\A$  and  $\B$, so  that   $\Phi_{\x,\y}(\D)=\Phi_{\x,\y}(\D')= \Phi_{\x,\y}(\A)\Phi_{\x,\y}(\B)$.
 Using now  the  desingularization rule IV we  get
$$  \Phi_{\x, \y}(\Sing)=  \x \ \Phi_{\x, \y}(\D)+ \y \Phi_{\x, \y}(\tilde \C),$$
$$
\Phi_{\x, \y}(\Sing')= \x \ \Phi_{\x, \y}(\tilde \C)+  \y \Phi_{\x, \y}(\D').
$$
Therefore,
$$
\Phi_{\x, \y}(\Sing)=\Phi(\A)\Phi_{\x, \y}(\B)(\x + \y  \f /  \sqrt{\c} ),$$  $$
\Phi_{\x, \y}(\Sing')=\Phi(\A)\Phi_{\x, \y}(\B)(\x \ \f/  \sqrt{\c}  +\y ).
$$
 These  values coincide  if  and only if  $\x=\y$.
In fact, $\Phi_{\x, \y}(\Sing)=\Phi_{\x, \y}(\Sing')$ implies $(\x-\y)(1-\f/ \sqrt{\c})=0$; now the equation $(1-\f/ \sqrt{\c})=0$ has  solutions  $\c=1$  and    $\c=\u^{-2}$,   so $\Phi_{\x, \y}$  distinguishes $\Sing$  and  $\Sing'$ if and only if $\x\not= \y$.

Consider now the  polynomial $\Psi_{\x, \y}$.  Using  the  desingularization rule IV',  we  get:
$$
\Psi_{\x, \y}(\Sing)=  \x \ \Psi_{\x, \y}(\D)+ \y \Psi_{\x, \y}(\C),$$
$$
\Psi_{\x, \y}(\Sing')= \x \  \Psi_{\x, \y}(\tilde \C)+  \y \Psi_{\x, \y}(\D').
$$
Therefore,
$$
\Psi_{\x, \y}(\Sing)=\Psi_{\x, \y}(\A)\Psi_{\x, \y}(\B)(\x+ \y/ (\a \sqrt{\c})  ),
$$
$$
\Psi_{\x, \y}(\Sing')=\Psi_{\x, \y}(\A)\Psi_{\x, \y}(\B)(\x \f/\sqrt{\c} +\y ).
$$
   Now, if $\x=\y$,  the  equation   $\Psi_{\x, \y}(\Sing)=\Psi_{\x, \y}(\Sing')$ implies  $\b=1$, hence $\Psi_{\x, \x}$ distinguishes $\Sing$ from $\Sing'$.
\end{proof}

\begin{remark}\rm
  Up to the present we don't have    examples showing that $\Psi_{\x, \y}$ is   able to distinguish pairs of     classical  singular links not distinguished  $\Phi_{\x, \y}$.
\end{remark}

\subsection{Comparison of our invariants with  known invariants}
\mbox{}

\begin{theorem}\label{comparison1}
Let  $\LLL_1$  and  $\LLL_2$  be two non isotopic links  distinguished  by  $\F$    but  not  by  $P$. Then any pair of  singular  links obtained by  adding to $\LLL_r$  ($r=1,2$) a new component making  a singular  crossing and  a  negative  crossing  with  whatever component of $\LLL_r$, is
distinguished  by  $\Phi_{\x,\y}$  and  by  $\Psi_{\x,\y}$   but  not  by  $\I_{PR}$.
\end{theorem}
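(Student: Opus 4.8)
Here is a plan for proving Theorem \ref{comparison1}.

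The plan is to resolve the unique singular crossing of each of the two singular links and to check that, for each of $\Phi_{\x,\y}$, $\Psi_{\x,\y}$ and $\I_{PR}$, the resulting value equals a fixed element of the coefficient ring --- one that does not depend on whether we started from $\LLL_1$ or from $\LLL_2$ --- times the value of the corresponding classical invariant on $\LLL_r$; the statement then follows from the hypotheses $\F(\LLL_1,\1)\neq\F(\LLL_2,\1)$ and $P(\LLL_1)=P(\LLL_2)$. Throughout, write $\Sing_r$ ($r=1,2$) for the singular link built from $\LLL_r$, write $K$ for the new unknotted component and $C$ for the component of $\LLL_r$ that $K$ meets, and regard $\Sing_r$ as the cts--link $(\Sing_r,\1_{n+1})$, where $n$ is the number of components of $\LLL_r$. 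The singular crossing sits between the two distinct blocks $\{C\}$ and $\{K\}$ of $\1_{n+1}$. The role of the extra \emph{negative} crossing is geometric: in the $L_+$--resolution of the singular crossing the positive and the negative crossing form a trivial clasp, so that this resolution is just $\LLL_r\sqcup O$ (with $K$ renamed $O$); while in the oriented smoothing $L_0$ the component $K$ is absorbed into $C$ and the one remaining crossing becomes a removable kink, so that $L_0$ is just $\LLL_r$.

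For $\Phi_{\x,\y}$ I would apply rule IV of Theorem \ref{theoremPhi} to the singular crossing. By the remarks above, the smoothing term is $(\LLL_r,\1_n)$ and the $L_+$ term is $\LLL_r\sqcup O$ carrying the partition $J'$ that ties $O$ to $C$. Now $\Phi_{\x,\y}$ and $\Psi_{\x,\y}$ restrict to the tied--link polynomial $\F$ on non--singular cts--links (cf. the proof of Theorem \ref{theoremPhi}), and adjoining a split unknot joined by a tie to a component of a link multiplies $\F$ by a fixed scalar $\lambda$ --- namely $\lambda=\b/(\a\sqrt{\c})=\f/\sqrt{\c}$, as in \cite{aijuJKTR1} and as in the computation of $\Phi_{\x,\y}(\tilde\C)$ in the proof of Theorem \ref{SS}. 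Hence
\[
\Phi_{\x,\y}(\Sing_r)=(\x+\lambda\,\y)\,\F(\LLL_r,\1_n).
\]
Since $\x$ and $\y$ are independent indeterminates and $\lambda$ does not involve them, $\x+\lambda\,\y\neq 0$; combined with $\F(\LLL_1,\1_n)\neq\F(\LLL_2,\1_n)$ this gives $\Phi_{\x,\y}(\Sing_1)\neq\Phi_{\x,\y}(\Sing_2)$. For $\Psi_{\x,\y}$ the argument is the same via rule IV$'$ of Theorem \ref{theoremPsi}; in that rule the $L_+$ term retains the partition $\1_{n+1}$, so rule II (together with $\Psi_{\x,\y}=\F$ on non--singular cts--links) gives $\Psi_{\x,\y}(\Sing_r)=(\x+\frac{1}{\a\sqrt{\c}}\,\y)\,\F(\LLL_r,\1_n)$, again with a nonzero coefficient of $\y$, whence $\Psi_{\x,\y}(\Sing_1)\neq\Psi_{\x,\y}(\Sing_2)$.

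For $\I_{PR}$ I would resolve the single singular crossing using the defining skein and desingularization rules of the Paris--Rabenda invariant. Since there is exactly one singular crossing and the part of the diagram around $K$ is a fixed local gadget, $\I_{PR}(\Sing_r)$ is a fixed linear combination, with coefficients not depending on $r$, of values of $\I_{PR}$ on classical links, each obtained from $\LLL_r$ by a local modification near a point of $C$: making $K$ a split unknot ($\LLL_r\sqcup O$), absorbing $K$ into $C$ ($\LLL_r$), or joining $K$ to $C$ by a clasp. By a single application of the Homflypt skein relation, the Homflypt polynomial of each of these classical links equals a fixed scalar times $P(\LLL_r)$, and $\I_{PR}$ coincides with $P$ on classical links (Proposition \ref{valuesinv}(1)); therefore $\I_{PR}(\Sing_r)=\kappa\,P(\LLL_r)$ for a fixed $\kappa$, and $P(\LLL_1)=P(\LLL_2)$ forces $\I_{PR}(\Sing_1)=\I_{PR}(\Sing_2)$.

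Apart from these computations, the only genuinely delicate step is the diagrammatic bookkeeping in the two preceding paragraphs: one must identify, up to isotopy and with the correctly induced set partitions, the links produced by the $L_0$--, $L_+$-- and clasp--resolutions (recognizing in particular the trivial clasp and the removable kink), and, in the $\Phi$--case, pin down the universal factor contributed by a split unknot attached through a tie. Everything else is a direct application of the desingularization rules of Theorems \ref{theoremPhi} and \ref{theoremPsi} and of Proposition \ref{valuesinv}.
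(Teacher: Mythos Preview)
Your proposal is correct and follows essentially the same route as the paper's proof: apply the desingularization rule to the unique singular crossing, identify the two resulting non--singular cts--links as $(\LLL_r,\1_n)$ and as $\LLL_r$ together with a split unknot (tied, in the $\Phi$ case; untied, in the $\Psi$ case), and conclude using that $\Phi_{\x,\y}$ and $\Psi_{\x,\y}$ restrict to $\F$ on non--singular cts--links. The only notable difference is in the $\I_{PR}$ step: the paper avoids invoking the Paris--Rabenda skein rules directly by using Proposition~\ref{valuesinv}(5) to write $\I_{PR}(\Sing_r)=\Phi_{\x,\y}(\Sing_r,I)$ with $I$ the one--block partition, then applies the same desingularization formula and Proposition~\ref{valuesinv}(2) to reduce everything to $P(\LLL_r)$; your direct argument via the Paris--Rabenda rules reaches the same conclusion but is a bit less tidy (you have to chase an extra ``clasp'' term through the Homflypt skein), whereas the paper's shortcut keeps the whole proof inside the $\Phi$/$\F$ framework already set up.
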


\begin{example}\label{exa2} \rm   The  cts--links  $(\N_1,\1_4)$  and  $(\N_2,\1_4)$  in Figure \ref{N1N2}  have  one  singularity.  They  are distinguished  by  the  polynomials  $\Phi_{\x,\y}$, $\Psi_{\x,\y}$, but not by  $\Phi_{\x,\y}'$, $\Psi_{\x,\y}'$, nor by $\I_{PR}$. Indeed, by removing  the orange  component,  we  obtain   the  pair $\LLL11n356\{1,0\}$  and  $\LLL11n434\{0,0\}$, distinguished  by  $\F$   but not by  $\F'$, nor by  $P$, see \cite{aica}.
\begin{center}
 \begin{figure}[H]
 \includegraphics{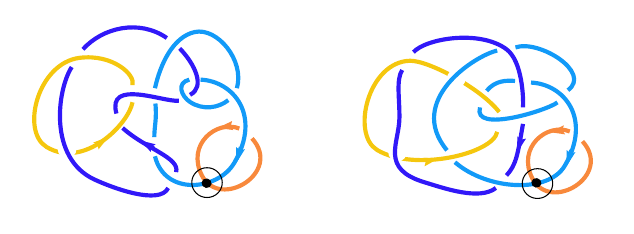}
 \caption{Two links $(\N_1,\1_4)$ and $(\N_2,\1_4)$  distinguished by  the  polynomials  $\Phi_{\x,\y}$, $\Psi_{\x,\y}$, but not by  $\Phi_{\x,\y}'$,   $\Phi_{\x,\y}'$ ,   $\I_{PR}$  .}
\label{N1N2}
 \end{figure}
 \end{center}

\end{example}

\begin{theorem}\label{comparison2}
Let  $\LLL_1$  and  $\LLL_2$  be two non isotopic links  distinguished  by  $\F'$  but  not  by  $P$. Then any pair of  singular  links obtained by  adding to $\LLL_r$  ($r=1,2$) a new component making  a singular  crossing and  a  negative  crossing  with  whatever component of $\LLL_r$, is distinguished  by  $ \Phi_{\x,\y}'$  and  by  $  \Psi_{\x,\y}'$  but  not  by  $\I_{PR}$.
\end{theorem}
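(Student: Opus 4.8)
The plan is to repeat, with the obvious modifications, the argument used for Theorem~\ref{comparison1}: replace $\F$, $\Phi_{\x, \y}$, $\Psi_{\x, \y}$ by $\F'$, $\Phi'_{\x, \y}$, $\Psi'_{\x, \y}$ and use Theorem~\ref{theoremPhiPsiprime} (which identifies $\Phi'_{\x, \y}$ and $\Psi'_{\x, \y}$ with $\F'$ on non--singular cts--links and gives their skein and desingularization rules) in place of Theorems~\ref{theoremPhi}--\ref{theoremPsi}. Fix $r\in\{1,2\}$, let $n$ be the number of components of $\LLL_r$, let $C$ be the added circle and $C'$ the component of $\LLL_r$ that $C$ meets --- along a single arc, in one singular crossing and one negative crossing --- and let $\N_r$ be the resulting $(n+1)$--component singular link, regarded as the cts--link $(\N_r,\1_{n+1})$.

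The first step is to desingularize the unique singular crossing of $\N_r$ by rule~IV (for $\Phi'_{\x, \y}$) and rule~IV$'$ (for $\Psi'_{\x, \y}$). In the $L_0$ term the strands of $C$ and $C'$ are spliced into one component; since $C$ is a small unknotted circle meeting $C'$ only at the smoothed crossing and at one further negative crossing, that crossing becomes a kink, removed by a Reidemeister~I move, so this term is the cts--link $(\LLL_r,\1_n)$. In the $L_+$ term the singular crossing is turned into a positive crossing, which cancels the adjacent negative crossing by a Reidemeister~II move, so $C$ becomes a split unknot $O$: for $\Phi'_{\x, \y}$, rule~IV records the merged partition, so this term is $(\LLL_r\sqcup O,J')$ with $O$ tied to $C'$, while for $\Psi'_{\x, \y}$, rule~IV$'$ keeps $\1_{n+1}$, so this term is $(\LLL_r\sqcup O,\1_{n+1})$, to which rule~II applies. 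On all of these non--singular cts--links $\Phi'_{\x, \y}$ and $\Psi'_{\x, \y}$ coincide with $\F'$ (Theorem~\ref{theoremPhiPsiprime}, cf.\ Proposition~\ref{valuesinv}(4)); moreover adjoining a split unknot tied to a prescribed component of a link multiplies $\F'$ by a fixed rational function $g$, and adjoining an untied split unknot (rule~II) multiplies it by a fixed rational function $\lambda'$, with $g$ and $\lambda'$ independent of the link and of $\x,\y$ --- the analogues for $\F'$ of the product formulas for $\F$ appearing in the proof of Theorem~\ref{SS}. Therefore
$$
\Phi'_{\x, \y}(\N_r)=(\x+g\,\y)\,\F'(\LLL_r,\1_n),\qquad
\Psi'_{\x, \y}(\N_r)=(\x+\lambda'\,\y)\,\F'(\LLL_r,\1_n).
$$
As $\x+g\,\y$ and $\x+\lambda'\,\y$ are non--zero (their coefficient of $\x$ is $1$) and $\F'(\LLL_1,\1_n)\neq\F'(\LLL_2,\1_n)$ by hypothesis, $\Phi'_{\x, \y}$ and $\Psi'_{\x, \y}$ distinguish $\N_1$ from $\N_2$.

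That $\I_{PR}$ fails to distinguish $\N_1$ and $\N_2$ is literally the computation from the proof of Theorem~\ref{comparison1}, no primed object being involved. Resolving the single singular crossing by the defining relations of $\I_{PR}$ writes $\I_{PR}(\N_r)$ as a universal linear combination of $\I_{PR}$ of the classical links obtained by turning that crossing into a positive crossing, a negative crossing, and/or its oriented smoothing, and by Proposition~\ref{valuesinv}(1) these values are the corresponding Homflypt polynomials. The positive resolution is $\LLL_r\sqcup O$ by a Reidemeister~II move and the smoothing is $\LLL_r$ by a Reidemeister~I move, while the negative resolution --- a two--crossing clasp of $C$ about $C'$ --- reduces to these same two links after one application of the Homflypt skein relation; hence each has Homflypt polynomial a universal multiple of $P(\LLL_r)$. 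It follows that $\I_{PR}(\N_r)$ equals a universal rational function times $P(\LLL_r)$, so $P(\LLL_1)=P(\LLL_2)$ forces $\I_{PR}(\N_1)=\I_{PR}(\N_2)$.

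The main obstacle, as in Theorem~\ref{comparison1}, is the partition bookkeeping through the Reidemeister simplifications and, for $\Phi'_{\x, \y}$, confirming that the multiplier $g$ produced when a split unknot is tied to one component really is independent of that component and of the ambient link; this is handled via the semi--direct product description $TB_n=\PP_n\rtimes B_n$ and the trace rules of Theorem~\ref{tracerho}, in the spirit of the computations in Theorem~\ref{SS}. A secondary point to make explicit is that the Paris--Rabenda relations do resolve a singular crossing into ordinary link diagrams, so that the reduction to $P$ above is legitimate.
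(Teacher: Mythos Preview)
Your argument is correct and follows the same plan as the paper: desingularize the unique singular crossing via rule~IV (resp.~IV$'$), identify the two resulting non--singular cts--links as $(\LLL_r,\1_n)$ and $\LLL_r$ with a split unknot (tied for $\Phi'$, untied for $\Psi'$), and then invoke that adjoining a (tied or untied) split unknot multiplies $\F'$ by a universal factor, so that everything is a non--zero multiple of $\F'(\LLL_r,\1_n)$. This is exactly what the paper does, only with $\F'$ in place of $\F$.

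The one point where you diverge slightly is the treatment of $\I_{PR}$. The paper does not resolve the singular crossing via the Paris--Rabenda relations directly; instead it uses Proposition~\ref{valuesinv}(5) to identify $\I_{PR}$ with $\Phi_{\x,\y}$ evaluated with the single--block partition, reruns the very same desingularization formula, and then observes via Proposition~\ref{valuesinv}(2) that $\F$ with the single--block partition is $P$. This avoids having to know the precise form of the Paris--Rabenda desingularization (in particular your extra step of reducing the ``negative resolution'' clasp by one Homflypt skein move is not needed). Your route is equally valid and perhaps more self--contained, but the paper's is shorter because the required identities are already packaged in Proposition~\ref{valuesinv}. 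Also note that in the paper the $\I_{PR}$ argument is actually given \emph{inside} the proof of Theorem~\ref{comparison2} (covering both theorems at once), not in the proof of Theorem~\ref{comparison1}, so your phrase ``literally the computation from the proof of Theorem~\ref{comparison1}'' points to the wrong place.
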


\begin{example}\label{exa3} \rm   The  cts--links  $(\M_1,\1_4)$  and  $(\M_2,\1_4)$  in Figure \ref{L1L2}  have  one  singularity.  They  are distinguished  by  the  polynomials  $\Phi_{\x,\y}$, $\Psi_{\x,\y}$, $\Phi_{\x,\y}'$  and  by  $\Psi_{\x,\y}'$, but not  $\I_{PR}$. Indeed, by removing  the orange  component,  we  obtain  the    pair $\LLL10n79\{1,1\}$  and  $\LLL10n95\{1,0\}$, distinguished  by  $\F$ and  by  $\F'$  but not by  $P$, see  respectively  \cite{aica} and \cite{chjukalaIMRN}.

 \begin{center}
 \begin{figure}[H]
 \includegraphics{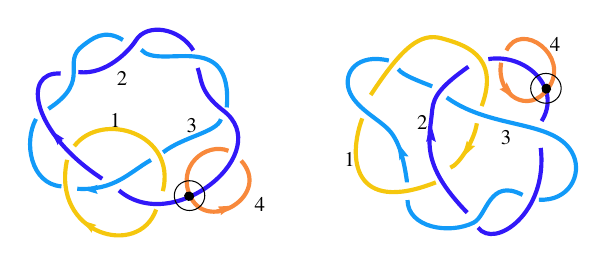}
 \caption{Two links $(\M_1,\1_4)$ and $(\M_2,\1_4)$  distinguished by  the  polynomials  $\Phi_{\x,\y}$, $\Psi_{\x,\y}$, $\Phi_{\x,\y}'$, and $\Phi_{\x,\y}'$  but  not by $\I_{PR}$  .}
\label{L1L2}
 \end{figure}
 \end{center}

\end{example}

\begin{proof}[Proof of  Theorem \ref{comparison1}]  We use  Example  \ref{exa3}  to  illustrate  the  proof.    By the  desingularization skein
  rule  IV,  we  get  for the pair  $(\M_r,\1_4)$, $r=1,2$ (see the pairs A and C   in Figure \ref{L1L2proof}):
$$
  \Phi_{\x,\y}(\M_{r,\times}^{2,4},\1_4)= \x  \  \Phi_{\x,\y} (\M_{r,0}^{2,2},\{\{1\}_1,\{2\}_2,\{3\}_3\}) + \y \    \Phi_{\x,\y}(\M_{r,+}^{2,2},\{\{1\}_1,\{2,4\}_2,\{3\}_3\}).
$$

Now, observe  that the pair    $(\M_{r,+}^{2,2},\{\{1\}_1,\{2,4\}_2,\{3\}_3\})_{r=1,2}$ corresponds to the  pair  (A in Figure \ref{L1L2proof})  of  tied links  $(\LLL_1\tilde \sqcup O, L_2 \tilde\sqcup O)$, where  the  symbol $\tilde \sqcup$  means that  there is  a  tie  between $\LLL$  and the  unknot,    while the pair  $(\M_{r,0}^{2,2},\{\{1\}_1,\{2\}_2,\{3\}_3\})_{r=1,2}$   (C in Figure \ref{L1L2proof}) is  the pair   $(\LLL_1,\LLL_2)$.   Notice  that,  by    Proposition \ref{valuesinv}, the  value of  $\Phi_{\x,\y}$    on  these pairs  is    the  value  of  $\F$,  which distinguishes the  pair  $(\LLL_1,\LLL_2)$.  Observe, moreover,  that  the value of  $\F$ on $\LLL_r\tilde \sqcup O $ is  the  value of $\F$ on $\LLL_r$ by a  coefficient independent from $\LLL_r$;  therefore $\F$ distinguishes  both pairs.  As for $\Psi$,  we  have
$$
  \Psi_{\x,\y}(\M_{r,\times}^{2,4},\1_4)= \x  \   \Psi_{\x,\y} (\M_{r,0}^{2,2},\{\{1\}_1,\{2\}_2,\{3\}_3\}) + \y \    \Psi_{\x,\y}(\M_{r,+}^{2,4},\1_4 ).
$$
  In Figure \ref{L1L2proof}, the pair $(\M_{r,+}^{2,4},\1_4)_{r=1,2}$ is the pair B   and $(\M_{r,0}^{2,2},\{\{1\}_1,\{2\}_2,\{3\}_3\})_{r=1,2}$  is again the pair C, i.e. $(\LLL_1,\LLL_2)$. Also in this  case,  by  Proposition \ref{valuesinv}, the  value of  $\Psi$    on  these pairs  is    the  value  of  $\F$, which distinguishes both pairs.
\end{proof}

 \begin{center}
 \begin{figure}[H]
 \includegraphics{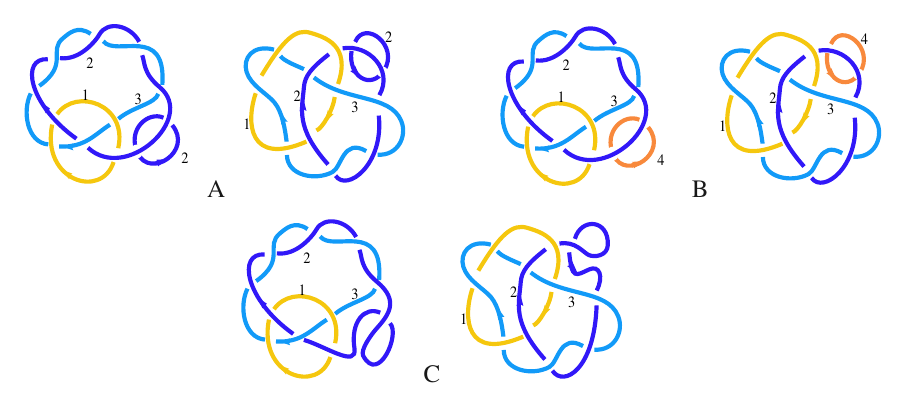}
 \caption{The  links  obtained  by  the  desingularization rules  of  the pair $(\M_1,\1_4)$ and $(\M_2,\1_4)$.  }
\label{L1L2proof}
 \end{figure}
 \end{center}

\begin{proof}[Proof of  Theorem \ref{comparison2}]

For the  values of $\Phi_{\x,\y}'$  and  $\Psi_{\x,\y}'$, the  argument is  exactly the  same  by  using  $\F'$ instead of $\F$.
The  value of  $\I_{PR}$, instead,  is  obtained by  substituting  the  partitions in the  last formula by  the  partitions with a  sole  part, see   Proposition  \ref{valuesinv}.  Thus the  values of  $\F$  and  $\F'$ coincide,  again by    Proposition  \ref{valuesinv},  with the  values of  $P$,  which  does not  distinguish  such pairs.  \end{proof}

\begin{proposition}\label{comparison3}
The pairs  of singular links,  denoted $\C_1$  and  $\C_2$  in  Figure \ref{L3L4}, are  both  distinguished by  $\Phi_{\x,\y}$, $\Psi_{\x,\y}$, $\Phi_{\x,\y}'$   and  by  $\Psi_{\x,\y}'$,   but are not distinguished by  $\I_{PR}$.
\end{proposition}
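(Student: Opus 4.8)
The plan is to follow the scheme used for Theorems \ref{comparison1} and \ref{comparison2}. For the pair in Figure \ref{L3L4}, let $\Sing$ and $\Sing'$ be the two singular links to be separated. I would first apply the desingularization rules to rewrite each of $\Phi_{\x,\y}$, $\Psi_{\x,\y}$, $\Phi'_{\x,\y}$, $\Psi'_{\x,\y}$ on $\Sing$ and on $\Sing'$ as an $\x,\y$--homogeneous linear combination of the same invariant evaluated on \emph{non-singular} cts--links, and then read off those values from Proposition \ref{valuesinv} together with the known non-equivalences of $\F$, $\F'$ and $P$ on the classical/tied links of \cite{chli}, \cite{aica}, \cite{chjukalaIMRN}.

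Concretely, for $\Phi_{\x,\y}$ and $\Phi'_{\x,\y}$ I would use rule IV and for $\Psi_{\x,\y}$ and $\Psi'_{\x,\y}$ rule IV' (Remark \ref{desing}(2)), resolving each singular crossing into the smoothed diagram $L_0$ (with the two blocks of the crossing strands merged) and the positively--crossed diagram $L_+$ (with its partition), exactly as in Notation \ref{LPM}; here one must keep track, for each crossing, of whether its two strands lie on the same or on different components and of how the blocks merge under $L_0$, using the case analysis in the Remark following Notation \ref{LPM}. Iterating over the $m$ singularities yields in each case a finite sum $\sum_w c_w(\x,\y)\,\mathrm{I}(L_w,J_w)$ with every $(L_w,J_w)$ a non-singular cts--link. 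By the proof of Theorem \ref{theoremPhi}, on non-singular cts--links $\Phi_{\x,\y}=\Psi_{\x,\y}=\F$ and $\Phi'_{\x,\y}=\Psi'_{\x,\y}=\F'$; and by Proposition \ref{valuesinv}(1)--(2), $\I_{PR}$ on a non-singular cts--link equals $P$ of the underlying classical link, the partition being collapsed to a single block.

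I would then conclude as follows. By construction of Figure \ref{L3L4}, the $(L_w,J_w)$ that occur are, up to an $r$--independent rescaling coming from a tie to an extra unknotted component $\tilde\sqcup\,O$ (as in the proof of Theorem \ref{comparison1}, where such a tie multiplies $\F$, resp.\ $\F'$, by a factor independent of the link it is attached to), either a pair $\LLL_1,\LLL_2$ from \cite{chli} --- distinguished by $\F$ and by $\F'$ but not by $P$ --- or a tied variant of it. Hence, viewed as polynomials in $\x,\y$ with coefficients among the values of $\F$ (resp.\ $\F'$), the combinations for $\Sing$ and for $\Sing'$ have different coefficient data, so $\Phi_{\x,\y}(\Sing)\neq\Phi_{\x,\y}(\Sing')$, and likewise for $\Psi_{\x,\y}$, $\Phi'_{\x,\y}$, $\Psi'_{\x,\y}$. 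Collapsing every partition to one block replaces $\F$ and $\F'$ by $P$ throughout; since $P$ does not separate $\LLL_1$ from $\LLL_2$, the two combinations become equal and $\I_{PR}(\Sing)=\I_{PR}(\Sing')$. Non-isotopy of $\Sing$ and $\Sing'$ is then automatic, being witnessed by any of the four invariants.

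The main obstacle I anticipate is not conceptual but the explicit bookkeeping: one must verify, from Figure \ref{L3L4}, exactly which tied links arise after all $m$ desingularizations and that they are the recorded pair(s) separated by $\F$/$\F'$ but not by $P$, and then check that summing over the $2^m$ smoothing choices produces no accidental cancellation of the difference between the two coefficient polynomials. Both are finite checks; the second reduces, precisely as in the proofs of Theorems \ref{comparison1} and \ref{comparison2}, to the already established facts that $\F$ and $\F'$ distinguish the auxiliary pair while $P$ does not, so the only genuinely new work is matching the desingularized diagrams in Figure \ref{L3L4} with $\LLL_1,\LLL_2$ and its tied variant.
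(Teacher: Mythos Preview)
Your overall strategy---desingularize, reduce to $\F$/$\F'$/$P$ via Proposition \ref{valuesinv}, then quote known separations---is the paper's strategy too. The gap is in your identification of the non-singular cts--links that actually appear. You model the situation on Theorems \ref{comparison1}--\ref{comparison2}, where the extra component is an unknot meeting $\LLL_r$ only in a singular crossing and a negative crossing, so that desingularization returns $\LLL_r$ and $\LLL_r\tilde\sqcup O$, the latter differing from $\F(\LLL_r)$ by an $r$--independent factor. The singular links in Figure \ref{L3L4} are not built that way: applying rule IV once (each link has a single singularity) produces, for the pair $\C_1$, the three--component pair $(\LLL_1,\LLL_2)$ \emph{and} a genuinely linked four--component pair $(\M_1,\M_2)$ carrying a nontrivial partition $\{\{3,4\}\}$; for $\C_2$ one gets $(\M_1,\M_2)$ with $\{\{2,4\}\}$ together with a second classical pair $(\LLL_3,\LLL_4)$. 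None of these $\M_r$ are of the form $\LLL_r\tilde\sqcup O$, so your ``$r$--independent rescaling'' argument does not apply.

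Consequently the proof needs an additional, independent input that you do not supply: that the four--component pair $(\M_1,\M_2)$ is itself not separated by $P$ but \emph{is} separated by $\F$ and $\F'$, and remains separated when two of its components are put in the same block. This is exactly what the paper records (see Figure \ref{M1M2} and the surrounding text), and without it the $\x$-- and $\y$--coefficients in your linear combinations could, a priori, cancel or fail to witness any difference. Once this extra fact is in hand, the argument for $\Phi_{\x,\y}$, $\Phi'_{\x,\y}$ goes through as you outline; for $\Psi_{\x,\y}$, $\Psi'_{\x,\y}$ the paper also uses that for $\C_2$ the singular crossing lies on a single component, so rules IV and IV$'$ coincide there. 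You should also treat $\C_1$ and $\C_2$ as two separate pairs rather than a single $(\Sing,\Sing')$.
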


 \begin{center}
 \begin{figure}[H]
 \includegraphics{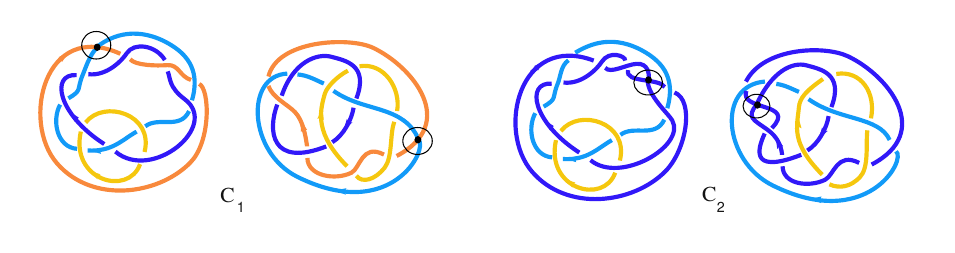}
 \caption{Pairs of links  distinguished by  the  polynomials  $\Phi_{\x,\y}$  and   $\Psi_{\x,\y}$.}
\label{L3L4}
 \end{figure}
 \end{center}

\begin{proof}  Let us  denote by $(\LLL_1,\LLL_2)$  the pair of classical links $\LLL10n79\{1,1\}$  and  $\LLL10n95\{1,0\}$,   and   by  $(\LLL_3, \LLL_4)$ the pair of classical links $\LLL11n325\{1,1\}$  and  $\LLL11n424\{0,0\}$, see  Figure \ref{M1M2}.  All  these  links  have 3  components,  and  both  pairs  are  distinguished  by  the  polynomials  $\F$  and  $\F'$ but not    by  the  $P$.

Consider now  the pair of classical  links  $(\M_1,\M_2)$  in Figure \ref{M1M2}, with  four components.  Also  this  pair is  not distinguished  by    $P$  but is  distinguished by  $\F$  and by  $\F'$,  also  when  two of the four components  belong to  the  same part.

For the pair  $\C_1$  of  singular links $\Sing_1$  and  $\Sing_2$  we  apply  the    desingularization  rule IV  and   we obtain:

$$  \Phi_{\x, \y} (\Sing_1)=  \x \ \Phi_{\x, \y}(\LLL_1,\1_3)  + \y  \Phi_{\x, \y}(\M_1^{3,4}, \{\{3,4\}\} ),    $$
$$  \Phi_{\x, \y} (\Sing_2)=  \x \  \Phi_{\x, \y}(\LLL_2,\1_3) +   \y  \Phi_{\x, \y}(\M_2^{3,4},\{\{3,4\}\} ) .  $$

For the pair  $\C_2$  of  singular links $\Sing_3$  and  $\Sing_4$  we  apply  the    desingularization  rule IV  and  we obtain:

$$  \Phi_{\x, \y} (\Sing_3)=  \x \  \Phi_{\x, \y}(\M_1^{2,4}, \{\{2,4\}\} )+   \y \Phi_{\x, \y}(\LLL_3,\1_3) ,  $$
$$  \Phi_{\x, \y} (\Sing_4)=  \x \ \Phi_{\x, \y}(\M_2^{2,4},\{\{2,4\}\} )  +  \y  \Phi_{\x, \y}(\LLL_4,1_3) .  $$

Now, by     Proposition  \ref{valuesinv},  we have that  $\Phi_{\x, \y}(\LLL_r,\1_3)=\F(\LLL_r)$  and the  value  of $\Phi_{\x, \y}$  on  $(\M_r^{2,4}, \{\{2,4\}\} )$ and  $(\M_r^{2,4}, \{\{3,4\}\} )$ coincides   with that of $\F$.

\begin{center}
 \begin{figure}[H]
 \includegraphics{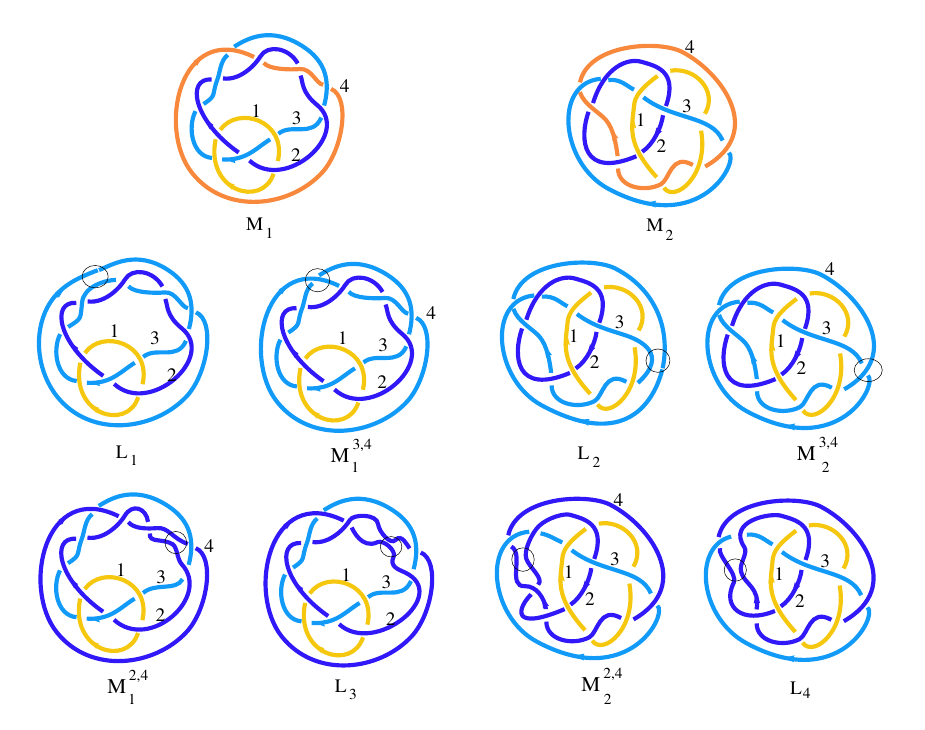}
 \caption{Two  pairs of singular links distinguished by  the  polynomials  $\Phi_{\x, \y}$,  $\Psi_{\x, \y}$ but not by  $\I_{PR}$.}
\label{M1M2}
 \end{figure}
 \end{center}

We do not  write  the  desingularization rules for  $\Phi_{\x,\y}'$,  since  they  give   the  same expressions  by  replacing  $\Phi_{\x,\y}$ with  $\Phi_{\x,\y}'$ and  $\F$ with $\F'$.  So,  $\Phi_{\x,\y}$  and  $\Phi_{\x,\y}'$  distinguish  the pairs $\C_1$ and  $\C_2$ as  a  consequence of  the  fact that  $\F$  and  $\F'$  distinguish the  links obtained  by  the  desingularization.  The  fact  that  $\I_{PR}$  does  not  distinguish these pairs,  follows from  the  fact  that  $P$  does not distinguish the corresponding pairs, see  Proposition  \ref{valuesinv} items (2) and (5).

For the polynomial $\Psi_{\x, \y}$,  the  desingularization rule  applied to   the pair $\C_1$ of  singular links $\Sing_1$  and  $\Sing_2$  gives:

$$  \Psi_{\x, \y} (\Sing_1)=  \x \ \F(\LLL_1)  + \y \F(\M_1, \1_4) ,    $$
$$  \Psi_{\x, \y} (\Sing_2)=  \x \ \F(\LLL_2) +   \y   \F(\M_2,\1_4 ) .  $$
The  same    holds  for  $\Psi'$, replacing  $\F$ with  $\F'$.   Now,   since  the  singularities of     $\Sing_3$ and $\Sing_4$ involve   a  unique component,  the  desingularization rules for  $\Psi$  and $\Psi'$ coincide  with  those for  $\Phi_{\x,\y}$.   Thus,   the  proof  follows  as    that  for $\Phi_{\x,\y}$.

\end{proof}
  Finally,  in   the  proposition below   we  show   the behavior of our invariants  and of  ${\rm I}_{PR}$ on  a  pair of  links with  two  singularities.
\begin{proposition}\label{comparison4}
 The
pair $\C_3$ of  singular  links in Figure \ref{SS2} with  two  singularities  is distinguished by $\Phi_{\x,\y}$,  $\Psi_{\x,\y}$,   $\Phi_{\x,\y}'$, $\Psi_{\x,\y}'$   and  by  $\I_{PR}$.
\end{proposition}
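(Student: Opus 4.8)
Write $\Sing$ and $\Sing'$ for the two singular links of the pair $\C_3$ in Figure \ref{SS2}, each carrying two singular points. The plan is exactly that of Theorems \ref{comparison1}, \ref{comparison2} and Proposition \ref{comparison3}: desingularize the two singular crossings of each link by means of the desingularization rules of Section \ref{ctsskein}, and reduce the computation of each of our five polynomials to the values of $\F$, $\F'$ and $P$ on the non--singular (tied) links so produced, which for the pair of Figure \ref{SS2} are -- up to the universal multiplicative factors coming from rules I and II of Theorem \ref{theoremPhi} -- among the pairs distinguished by these classical invariants according to the tables in \cite{chli} (cf. also \cite{aica}, \cite{chjukalaIMRN}).

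First I would treat $\Phi_{\x,\y}$ and $\Phi'_{\x,\y}$. Applying the desingularization rule IV of Theorem \ref{theoremPhi} to one singular crossing and then to the other -- the outcome being independent of the order, see the proof of Theorem \ref{theoremPhi} -- one gets a homogeneous polynomial of degree $2$ in $\x,\y$, in accordance with Remark \ref{desing}(3),
\begin{equation*}
\Phi_{\x,\y}(\Sing,\1_n)=\x^2\,\Phi_{\x,\y}(L_{00})+\x\y\,\big(\Phi_{\x,\y}(L_{0+})+\Phi_{\x,\y}(L_{+0})\big)+\y^2\,\Phi_{\x,\y}(L_{++}),
\end{equation*}
where $L_{00},L_{0+},L_{+0},L_{++}$ denote the non--singular cts--links obtained by resolving, at each of the two crossings, into the $(L_0^{i,i},J'')$ term or the $(L_+^{i,i},J')$ term of rule IV, read off from Figure \ref{SS2} via Notation \ref{LPM}. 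By item (3) of Proposition \ref{valuesinv} each summand is a value of $\F$ on the corresponding tied link, and by items (2) and (5) of the same proposition these values are governed by $\F$ on the underlying classical links of \cite{chli} (and their one--block companions). Since $\F$ separates the relevant pairs, at least one coefficient of the above polynomial differs for $\Sing$ and $\Sing'$, hence $\Phi_{\x,\y}$ distinguishes $\C_3$. The identical argument with $\F'$ in place of $\F$ (Theorem \ref{theoremPhiPsiprime}(1), Proposition \ref{valuesinv}(4)) settles $\Phi'_{\x,\y}$.

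For $\Psi_{\x,\y}$ and $\Psi'_{\x,\y}$ one repeats the computation with the desingularization rule IV$'$ (Theorems \ref{theoremPsi} and \ref{theoremPhiPsiprime}(2)) in place of IV, which at each crossing replaces the term $(L_+^{i,i},J')$ by $(L_+^{i,k},J)$; the rest is unchanged, the coefficients being again controlled by $\F$ (resp. $\F'$) through Proposition \ref{valuesinv}. Finally, for $\I_{PR}$ I would run the same desingularization once more but, as in the proof of Theorem \ref{comparison2} and using items (1), (2) and (5) of Proposition \ref{valuesinv}, replace every set partition occurring in the reduction by the set partition with a single block; then every value of $\F$ (or $\F'$) collapses to a value of $P$ on the corresponding classical link. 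Here -- and this is where $\C_3$ differs from the pairs used in Proposition \ref{comparison3} -- the classical links underlying Figure \ref{SS2}, together with those produced by the desingularization, are \emph{distinguished} by the Homflypt polynomial $P$; hence the two degree--$2$ polynomials in $\x,\y$ so obtained differ, and $\I_{PR}$ distinguishes $\C_3$.

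The only delicate point -- the main obstacle -- is to make sure that in each of the five reductions the (few) coefficients do not conspire into the same polynomial for $\Sing$ and for $\Sing'$, i.e. that no accidental cancellation takes place of the kind that makes $\Phi_{\x,\x}$ fail in Theorem \ref{SS}. This is checked directly by exhibiting the explicit values of $\F$, $\F'$ and $P$ on the handful of (tied) links arising from the desingularization of Figure \ref{SS2}.
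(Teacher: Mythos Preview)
Your approach is essentially the same as the paper's: desingularize the two singular crossings, obtaining four pairs of (tied) classical links, and then read off the result from the values of $\F$, $\F'$ and $P$ on those pairs via Proposition~\ref{valuesinv}. The paper's proof is actually terser than yours; it simply observes that some of the four desingularized pairs are exactly those already treated in Proposition~\ref{comparison3} (hence $\Phi,\Psi,\Phi',\Psi'$ separate), while among the remaining desingularized pairs there are some distinguished already by the Homflypt polynomial $P$, which is precisely what makes $\I_{PR}$ work here in contrast to Proposition~\ref{comparison3}.

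The one place where your write-up is slightly less sharp than the paper's is in the logic for $\Phi_{\x,\y}$ etc.: you argue that ``$\F$ separates the relevant pairs, hence at least one coefficient differs'', but for a degree--$2$ homogeneous polynomial in $\x,\y$ this inference needs the observation that the four coefficients are independent --- otherwise differences in the $\x^2$ and $\y^2$ terms could in principle be cancelled by the $\x\y$ term. The paper sidesteps this by pointing directly to Proposition~\ref{comparison3}: one of the four desingularized pairs (namely the one with both crossings smoothed, or both replaced by positive crossings) \emph{is} a pair already known to be separated by $\F$ and $\F'$, so a single coefficient already differs and no cancellation can occur. Your final paragraph acknowledges this issue and defers it to explicit computation, which is fine, but the paper's route via Proposition~\ref{comparison3} is cleaner because it avoids having to display any values.
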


\begin{proof} The desingularizations  of  the two  singular links give    four  pairs of  classical links, some of  them     already  considered  in  Proposition \ref{comparison3}.  However,  the  presence  of other  pairs,  distinguished  by  the classical  polynomials, makes the  original  singular pair distinguished  also  by  $\I_{PR}$.
\end{proof}

\begin{center}
 \begin{figure}[H]
 \includegraphics{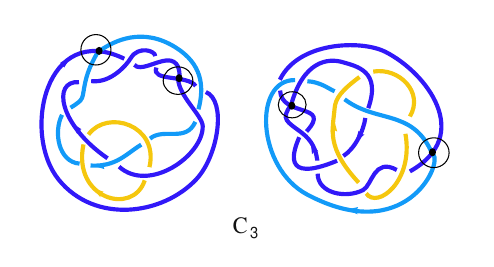}
 \caption{Two singular links distinguished by  the  polynomials  $\Phi_{\x, \y}$, $\Phi'_{\x, \y}$, $\Psi_{\x, \y}$, $\Psi'_{\x, \y}$  and $\I_{PR}$.}
\label{SS2}
 \end{figure}
 \end{center}

\end{document}